\documentclass[11pt]{amsart}

\usepackage[T1]{fontenc}
\usepackage[utf8]{inputenc}
\usepackage{lmodern}
\usepackage{amsmath,amssymb,amsthm,mathtools}
\usepackage{microtype}
\usepackage{geometry}
\usepackage{enumitem}
\usepackage{booktabs,tabularx}
\usepackage{placeins}
\usepackage[colorlinks=true,linkcolor=blue,citecolor=blue,urlcolor=blue]{hyperref}

\hypersetup{
  pdftitle={Nonnegative Bakry--Emery Curvature on Bounded-Degree Graphs Implies Volume Doubling and Poincare Inequalities},
  pdfauthor={Qi Guo; Xueping Huang; Yi C. Huang},
  pdfsubject={Bakry--Emery curvature on graphs, volume doubling, and Poincare inequalities},
  pdfkeywords={Bakry--Emery curvature, graphs, volume doubling, Poincare inequality, Point-mass estimates, resolvent smoothing}
}
\allowdisplaybreaks
\setlist{nosep}

\newtheorem{theorem}{Theorem}[section]
\newtheorem{lemma}[theorem]{Lemma}
\newtheorem{proposition}[theorem]{Proposition}
\newtheorem{corollary}[theorem]{Corollary}
\newtheorem{conjecture}{Conjecture}
\newtheorem{assumption}{Assumption}

\newtheoremstyle{italicremark}
  {6pt}{6pt}{\itshape}{}{\itshape}{.}{0.5em}{}
\theoremstyle{italicremark}
\newtheorem*{remark}{Remark}
\theoremstyle{plain}

\newcommand{\dist}{\operatorname{dist}}
\newcommand{\osc}{\operatorname{osc}}
\newcommand{\Var}{\operatorname{Var}}
\newcommand{\supp}{\operatorname{supp}}
\newcommand{\TV}{\mathrm{TV}}
\newcommand{\one}{\mathbf{1}}
\newcommand{\R}{\mathbb{R}}
\newcommand{\ip}[2]{\left\langle #1,#2\right\rangle}
\newcommand{\dd}{\,\mathrm{d}}
\newcommand{\e}{\mathrm{e}}
\newcommand{\norm}[1]{\left\lVert #1\right\rVert}
\newcommand{\abs}[1]{\left\lvert #1\right\rvert}
\newcommand{\pos}[1]{\left(#1\right)_+}

\title[Nonnegative Bakry--\'Emery Curvature Implies Doubling and Poincar\'e]
{Nonnegative Bakry--\'Emery Curvature on Bounded-Degree Graphs Implies
Volume Doubling and Poincar\'e Inequalities}

\author[Q. Guo]{Qi Guo}
\address{School of Mathematics, Renmin University of China, Beijing, 100872, P.R. China}
\email{qguo@ruc.edu.cn}

\author[X. Huang]{Xueping Huang}
\address{School of Mathematics and Statistics, Nanjing University of Information Science and Technology, Nanjing 210044, P.R. China}
\email{hxp@nuist.edu.cn}

\author[Y. C. Huang]{Yi C. Huang}
\address{School of Mathematical Sciences, Nanjing Normal University, Nanjing 210023, P.R. China}
\email{Yi.Huang.Analysis@gmail.com}

\subjclass[2020]{05C50, 60J27, 58J35}
\keywords{Bakry--\'Emery curvature, volume doubling, Poincar\'e inequality}
\date{}

\begin{document}

\begin{abstract}
We prove that every connected simple graph of bounded degree satisfying the classical
dimension-free Bakry--\'Emery condition $\mathrm{CD}(0,\infty)$ for the unnormalised
Laplacian is volume doubling and supports, at all integer graph scales, a scale-invariant
$L^2$-Poincar\'e inequality with dilation two, with constants depending only on the maximum
degree. This settles the polynomial-growth conjecture of Cushing, Liu, and Peyerimhoff in a
stronger form. The main novelty is a dimension-free adaptation of the graph-theoretic modified
nonlinear heat-flow method introduced by M\"unch and extended to infinite weighted graphs by
Pajot and Russ: point-mass consequences of $\Gamma_2\geq0$ and positive-resolvent smoothing
replace any global $\mathrm{CD}(0,n)$ reduction, while diffusive exit-time control and
finite-volume localisation yield the Poincar\'e inequality.
\end{abstract}

\maketitle

\setcounter{tocdepth}{1}
\tableofcontents

\section{Introduction}

On a complete Riemannian manifold, the landmark differential gradient estimate of Li and
Yau controls positive solutions of the heat equation in terms of a lower Ricci-curvature
bound \cite{LiYau}. In the case of nonnegative Ricci curvature, its integrated form gives a
parabolic Harnack inequality and, together with volume comparison, Gaussian heat-kernel
estimates. More generally, the work of Grigor'yan and Saloff-Coste shows that, for complete
Riemannian manifolds, the following three properties are quantitatively equivalent
\cite{GrigoryanHeat,SaloffCosteHarnack}:
\begin{enumerate}[label=(\roman*)]
\item volume doubling together with a scale-invariant $L^2$-Poincar\'e inequality;
\item a scale-invariant parabolic Harnack inequality;
\item two-sided Gaussian heat-kernel estimates.
\end{enumerate}
This equivalence makes volume doubling and Poincar\'e inequalities central geometric
counterparts of heat-flow regularity.

There are two principal approaches to formulating lower Ricci bounds beyond the smooth
setting. The Bakry--\'Emery approach encodes curvature through the Bochner inequality and
$\Gamma_2$-calculus for a Markov semigroup \cite{BE,BGL}. The approach of Lott--Villani
and Sturm instead uses displacement convexity of entropy along Wasserstein geodesics
\cite{LottVillani,SturmI,SturmII}. On infinitesimally Hilbertian metric-measure spaces,
the corresponding Riemannian curvature-dimension and Bakry--\'Emery formulations are
equivalent \cite{AGS,EKS}.

Both approaches have discrete analogues, but they lead to several inequivalent notions. For
graph Laplacians, the Bakry--\'Emery formalism leads to classical curvature-dimension
inequalities \cite{LY,BHLLMY}, whereas transport ideas lead, among
other notions, to Ollivier's coarse Ricci curvature and the entropic Ricci curvature of
Erbar and Maas \cite{Ollivier,ErbarMaas}. There is no general identification among these
discrete curvatures. Moreover, a graph Laplacian is not a diffusion operator: the chain
rule used in the classical logarithmic Li--Yau argument fails. Within the Bakry--\'Emery
formalism, $\mathrm{CD}(0,\infty)$ is the direct discrete version of the nonnegative
Bochner inequality, yet it was not known whether, on bounded-degree simple graphs, this
condition alone forces volume doubling or a scale-invariant ball Poincar\'e inequality
with constants depending only on the degree bound.

In this paper we resolve both questions. Our argument builds on the modified nonlinear
heat equation introduced by M\"unch for finite graphs and extended by Pajot and Russ to
infinite bounded-geometry graphs \cite{MunchCDN,RussPajot}. We prove that classical
nonnegative Bakry--\'Emery curvature controls the large-scale geometry of every connected
simple graph of bounded degree: it implies uniform volume doubling, polynomial volume
growth, and a scale-invariant local $L^2$-Poincar\'e inequality. The curvature hypothesis is
the dimension-free quadratic condition $\mathrm{CD}(0,\infty)$ for the unnormalised
Laplacian; no finite-dimensional curvature condition, exponential curvature-dimension
condition, regularity, or edge-regularity is assumed. The new dimension-free input consists
of point-mass consequences of the positive semidefinite form $\Gamma_2(\cdot,\cdot)(x)$
and a positive-resolvent smoothing estimate. The Poincar\'e inequality additionally requires
a heat-kernel localisation argument.

Throughout the paper, $G=(V,E)$ denotes a connected simple undirected graph. We work with
the unnormalised graph Laplacian
\[
\Delta f(x)=\sum_{y\sim x}\bigl(f(y)-f(x)\bigr).
\]
Its carr\'e du champ and iterated carr\'e du champ are denoted simply by
\[
\Gamma(f,g)
 =\frac12\bigl(\Delta(fg)-f\Delta g-g\Delta f\bigr)
\]
and
\[
\Gamma_2(f,g)
 =\frac12\bigl(\Delta\Gamma(f,g)
 -\Gamma(f,\Delta g)-\Gamma(g,\Delta f)\bigr).
\]
We write $\Gamma(f)=\Gamma(f,f)$ and $\Gamma_2(f)=\Gamma_2(f,f)$. The following two
assumptions will be used throughout.

\begin{assumption}\label{ass:degree}
The graph has bounded degree:
\[
d_*:=\sup_{x\in V}\deg(x)<\infty.
\]
\end{assumption}

\begin{assumption}\label{ass:curvature}
The unnormalised Laplacian satisfies the classical condition $\mathrm{CD}(0,\infty)$:
\[
\Gamma_2(f)(x)\geq0
\]
for every $x\in V$ and every real-valued function $f$ on $V$.
\end{assumption}

Assumption~\ref{ass:degree} implies local finiteness. Moreover,
$\Gamma_2(f)(x)$ depends only on the values of $f$ in the two-ball about $x$, so
Assumption~\ref{ass:curvature} is meaningful on infinite graphs and is unchanged if one tests
only finitely supported functions. We use open graph balls
\[
B(x,r):=\{y\in V:\dist(x,y)<r\}.
\]
For integer radii this convention differs from the closed-ball convention used in parts of the
literature only by shifting the radius by one. All volume-doubling and ball-Poincar\'e statements
below are formulated at positive integer radii, the natural graph scales.

\subsection{Curvature conditions and discrete Li--Yau theories}

For graph Laplacians, the classical carr\'e-du-champ theory was developed by Lin and Yau
\cite{LY}, Bauer, Horn, Lin, Lippner, Mangoubi, and Yau \cite{BHLLMY}, and many subsequent
authors, building on the diffusion formalism of Bakry and \`Emery \cite{BE,BGL}. The local
curvature-function and matrix viewpoint was developed systematically by Cushing, Liu, and
Peyerimhoff \cite{CLP}. These formulations make the local nature of graph Bakry--\'Emery
curvature explicit, but they do not restore the diffusion chain rule.

Several curvature conditions with similar notation must be kept separate. For $n<\infty$, the
classical condition is
\[
\mathrm{CD}(K,n):\qquad
\Gamma_2(f)\geq K\Gamma(f)+\frac1n(\Delta f)^2.
\]
Thus $\mathrm{CD}(K,n)$ implies $\mathrm{CD}(K,\infty)$, and the latter contains no finite
parameter. A graph Laplacian is not a diffusion operator, however, so the chain rule needed in
the manifold proof of the logarithmic Li--Yau estimate fails. This led to several nonlinear
substitutes.

Bauer et al. introduced the exponential curvature-dimension framework
$\mathrm{CDE}(n,K)$ and proved Li--Yau, Harnack, heat-kernel, Buser, and volume-growth
consequences \cite{BHLLMY}. Horn, Lin, Liu, and Yau later worked under the stronger
condition $\mathrm{CDE}'(n,K)$ \cite{HLLY}; M\"unch proved that
$\mathrm{CDE}'(n,K)$ implies the classical condition $\mathrm{CD}(K,n)$
\cite{MunchRemarks}. Hence $\mathrm{CDE}'(n,0)$ is formally stronger than the hypothesis of
the present paper both in curvature type and in dimension. By contrast,
$\mathrm{CDE}(n,0)$ is a distinct finite-dimensional nonlinear condition, and no general
implication from the classical condition used here to $\mathrm{CDE}(n,0)$ is known.

A different nonlinear approach was developed by M\"unch using a modified carr\'e-du-champ
calculus \cite{MunchNonlinear}. Dier, Kassmann, and Zacher introduced a
$\mathrm{CD}(F;0)$ condition and obtained discrete logarithmic Li--Yau and differential Harnack
estimates for positive solutions of the standard heat equation \cite{DKZ}. Their condition is
again distinct: its general relationship with the classical $\mathrm{CD}$ and exponential
$\mathrm{CDE}$ conditions is not known. These works are important predecessors for the
heat-flow viewpoint, but they do not by themselves give the volume-doubling and metric-ball
Poincar\'e statements proved below under classical $\mathrm{CD}(0,\infty)$.

For the classical condition itself, Lin and Liu identified the bounded-generator equivalence with
heat-semigroup gradient and variance inequalities \cite{LinLiu}; Keller and M\"unch obtained
related semigroup characterisations for unbounded graph Laplacians under suitable domain and
ellipticity hypotheses \cite{KellerMunch}. In particular, $\mathrm{CD}(0,\infty)$ yields the
pointwise semigroup estimate often called a ``reverse Poincar\'e inequality.'' This terminology
should not be confused with the local scale-invariant ball Poincar\'e inequality in
Theorem~\ref{thm:poincare-two} below. The same warning applies to pseudo-Poincar\'e estimates
and to global Poincar\'e, spectral-gap, or Buser inequalities
\cite{LiuBuser,KKRT,LakzianMcGuirk}: none of these is, without an additional localisation
argument, a uniform Neumann-type inequality on every metric ball.

\subsection{The volume-growth problem and its history}

The expectation that nonnegative discrete curvature should force polynomial volume growth can
be traced at least implicitly to Bauer et al.\ \cite{BHLLMY}. Under
$\mathrm{CDE}(n,0)$ they proved that bounded-degree graphs have polynomial volume growth
(in fact an upper bound of order $r^{2n}$ in their normalisation), together with Li--Yau and
standard-heat Harnack estimates and one-sided heat-kernel bounds. This was the first general
theorem of the form ``nonnegative curvature implies polynomial growth'' in the discrete
Bakry--\'Emery setting, and it is the natural historical source of the later folklore conjecture.
The general $\mathrm{CDE}(n,0)$ argument did not, however, yield scale-uniform volume doubling
or the metric-ball Poincar\'e inequality; the usual local package was recovered there under
additional strong cut-off hypotheses.

Horn, Lin, Liu, and Yau obtained a sharper analytic package under
$\mathrm{CDE}'(n,0)$ \cite{HLLY}. Their semigroup argument proves volume doubling under
$\mathrm{CDE}'(n,0)$; after the usual $\Delta(\alpha)$ ellipticity and laziness condition is
imposed, they obtain two-sided discrete-time Gaussian estimates, the weak scale-invariant
Poincar\'e inequality, and the parabolic Harnack inequality through Delmotte's theorem
\cite{Delmotte}. The logical distinction is important: by \cite{MunchRemarks}, their
hypothesis implies a finite-dimensional classical condition, whereas Assumption~\ref{ass:curvature}
is dimension-free.

The bounded-degree conjecture for the \emph{classical} condition was formulated explicitly by
Cushing, Liu, and Peyerimhoff as a discrete Bishop-type problem
\cite[Conjecture~9.10]{CLP}. They asked whether there are constants depending only on the
degree bound such that
\[
\#B(x,r)\leq C_1(d_*)\bigl(1+r^{C_2(d_*)}\bigr)
\]
for every graph satisfying Assumptions~\ref{ass:degree} and~\ref{ass:curvature}. Their work
also developed the local curvature-function and matrix viewpoint and exhibited finite-dimensional
obstructions. Salez later proved that bounded-degree expander families cannot satisfy
$\mathrm{CD}(0,\infty)$ \cite{Salez}, settling a necessary consequence of the conjecture but
not controlling volume growth or providing a scale-by-scale volume comparison.

Classical finite dimension was understood more recently. M\"unch introduced the modified heat
equation
\[
\partial_tu=\Delta u+\Gamma(u)
\]
and proved volume doubling for finite graphs satisfying $\mathrm{CD}(0,n)$
\cite{MunchCDN}. Pajot and Russ extended the construction and the doubling theorem to finite
or infinite weighted graphs of bounded geometry \cite{RussPajot}. Their Harnack inequality is
for this nonlinear modified heat equation; it is therefore not a standard heat-kernel Harnack
inequality and does not directly yield the metric-ball Poincar\'e inequality through Delmotte's
equivalence.

Blachar, Pajot, and Salez recently reformulated the classical dimension-free conjecture in the
integer-radius form stated below and proved the stronger volume-doubling conclusion for
edge-regular graphs \cite{BPS}. Their key input is an optimal finite-dimensional
self-improvement depending on the degree and the common-neighbour parameter. This includes
many symmetric examples but not arbitrary bounded-degree graphs, so the general bounded-degree
case remained open.

The bounded-geometry hypothesis cannot simply be omitted or replaced by an arbitrary
normalised-Laplacian statement. The exponentially growing normalised antitrees in
\cite{Antitrees} have nonnegative (indeed positive pointwise) normalised Bakry--\'Emery
curvature while failing polynomial growth and volume doubling; their degrees are unbounded.
This is why the physical, unnormalised Laplacian and the uniform degree bound in
Assumptions~\ref{ass:degree}--\ref{ass:curvature} are substantive.

With our open-ball convention, the recent integer-radius formulation is as follows.

\begin{conjecture}\label{conj:poly}
Suppose Assumptions~\ref{ass:degree} and~\ref{ass:curvature} hold. Then there is a finite
exponent $D(d_*)$ such that
\[
\#B(x,r)\leq r^{D(d_*)}
\]
for every $x\in V$ and every integer $r\geq1$.
\end{conjecture}

The formulation with a multiplicative constant in \cite{CLP} and the normalised
integer-radius formulation in \cite{BPS} are equivalent after changing the exponent by an
amount depending only on $d_*$. Conjecture~\ref{conj:poly} asks whether a local,
infinite-dimensional quadratic inequality imposes a uniform finite-dimensional geometry at all
scales. It asks for substantially more than the absence of expanders, and its difficulty is
specifically discrete: the standard manifold proof uses both a finite dimension and a diffusion
chain rule, neither of which is available under Assumption~\ref{ass:curvature}.

Our first main result proves the conjecture in the stronger doubling form.

\begin{theorem}\label{thm:doubling}
Suppose Assumptions~\ref{ass:degree} and~\ref{ass:curvature} hold. Then there is a finite
constant $K(d_*)$ such that
\[
\#B(x,2r)\leq K(d_*)\,\#B(x,r)
\]
for every $x\in V$ and every integer $r\geq1$.
\end{theorem}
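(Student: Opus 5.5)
The plan is to follow Münch's modified heat-flow strategy for volume doubling, with the finite-dimensional ingredient replaced by dimension-free inputs. Fix $x_0\in V$ and an integer $r\geq1$. Take $A=B(x_0,r)$ and the initial datum $u_0=-M\one_{V\setminus A}$, or a suitable truncated distance function. Run the modified flow
\[
\partial_t u=\Delta u+\Gamma(u),
\]
equivalently $v=\e^{u}$ solves a linear-type heat equation up to the nonlinearity of the discrete chain rule. Existence on infinite bounded-degree graphs is taken from the Pajot--Russ construction.

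The first key step is a gradient estimate for solutions along this flow. I would aim for a time-scale bound of the form $\Gamma(u_t)\leq C(d_*)/t$ together with a lower bound $\Delta u_t\geq -C(d_*)/t$, i.e.\ a Li--Yau/Hamilton-type inequality. Münch obtains this from $\mathrm{CD}(0,n)$ through a maximum-principle argument on the quantity $t\,\Gamma(u)+\ldots$. Here $n$ is unavailable, so the plan replaces the $\frac1n(\Delta f)^2$ term by \emph{point-mass consequences} of $\Gamma_2(\cdot)(x)\geq0$, as the abstract announces. Testing the quadratic form $\Gamma_2(\cdot,\cdot)(x)$ on functions like $\one_y+s\one_x$ and on combinations of indicators in the two-ball gives explicit inequalities relating the local combinatorics (degrees, common neighbours, two-step paths) at $x$. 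These yield, for functions with bounded $\Gamma$, a substitute of the form $\Gamma_2(f)(x)\geq c(d_*)\,\Gamma(\Gamma(f)^{1/2})$-type control, or a weak self-improvement usable at the maximum point.

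The second step smooths the initial data. I would apply the positive resolvent $(1-\epsilon\Delta)^{-1}$, with $\epsilon$ depending only on $d_*$, to make the datum locally comparable across neighbours, so that ratios $v(y)/v(x)$ for $y\sim x$ stay bounded by $C(d_*)$. This is the quantity the point-mass inequalities can control, and it closes the maximum-principle argument. Preservation of this ratio bound under the flow follows from $\mathrm{CD}(0,\infty)$ via the semigroup gradient bound $\Gamma(P_tf)\leq P_t\Gamma(f)$ (Lin--Liu).

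With the gradient estimate in hand, the doubling follows Münch's scheme at time $t\asymp r^2$. On one side, finite propagation up to Gaussian tails shows that $\sum_{B(x_0,2r)}\e^{u_t}$ is comparable to the mass of the heat flow started from $B(x_0,r)$. On the other side, integrating the bound $\Delta u\geq -C/t$ against the heat kernel gives the comparison $\#B(x_0,2r)\lesssim \#B(x_0,r)$. Alternatively, the Harnack inequality for the modified equation from integrating the gradient estimate along paths, $u_t(y)\leq u_{2t}(x)+C\,\dist(x,y)^2/t+C$, compares values on $B(x_0,2r)$ with values on $B(x_0,r)$ and yields $K(d_*)$. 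Small $r$, say $r\leq r_0(d_*)$, is trivial since $\#B(x,2r)\leq d_*^{2r}$. The main obstacle is the first step: extracting, without any dimension parameter, a closed maximum-principle inequality for the Li--Yau quantity. I would attempt it by choosing the test functions in $\Gamma_2(\cdot)(x)\geq0$ adapted to the maximiser, and rely on the resolvent-smoothed bounded-ratio property to absorb the non-diffusive error terms.
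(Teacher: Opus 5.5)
There is a genuine gap. The two dimension-free substitutes for $\mathrm{CD}(0,n)$ are either missing or assigned the wrong role, and you leave the first one unresolved yourself.

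\textbf{The Li--Yau step.} The maximum principle should be run on $t\Delta u_t$ (for this flow $\partial_tu-\Gamma(u)=\Delta u$). What is needed at a penalised minimiser $x$ with $\Delta u_t(x)<0$ is the bound $\Gamma_2(u_t)(x)\geq(\Delta u_t(x))^2/N$.

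Testing $\Gamma_2(\cdot)(x)\geq0$ on fixed indicator combinations such as $\one_y+s\one_x$ only gives combinatorial constraints on the graph. An inequality like $\Gamma_2(f)\geq c\,\Gamma(\Gamma(f)^{1/2})$, or a self-improvement, is neither established nor available: global finite-dimensional self-improvement is not available under $\mathrm{CD}(0,\infty)$.

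The missing idea is to use the \emph{bilinear} form against the solution itself. One has $4\Gamma_2(f,e_x)(x)=\Delta^2f(x)-2\Delta f(x)$ and $4\Gamma_2(e_x)(x)=d_x(d_x+3)$. Cauchy--Schwarz for the positive semidefinite form $\Gamma_2(\cdot,\cdot)(x)$ then gives $(\Delta^2f-2\Delta f)^2(x)\leq4d_x(d_x+3)\Gamma_2(f)(x)$. At an approximate local minimum of $\Delta f$ one has $\Delta^2f(x)\geq0$ up to an $\varepsilon$-error, and the extremal inequality follows.

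A Hamilton-type bound $\Gamma(u_t)\leq C/t$ is not needed. What is needed is $\|\Gamma(u_t)\|_\infty\leq\|\Gamma(u_0)\|_\infty<\tfrac12$. This requires a maximum principle for the nonlinear flow itself, via $\partial_t\Gamma(u)=\Delta\Gamma(u)+2\Gamma(u,\Gamma(u))-2\Gamma_2(u)$ with drift coefficients $1+u(y)-u(x)\geq0$. The linear bound $\Gamma(P_tf)\leq P_t\Gamma(f)$ does not transfer, because $\e^{u_t}\neq P_t\e^{u_0}$.

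\textbf{The resolvent step.} Neighbour ratios come for free with the Lipschitz cutoff $u_0=-C\min\{\dist(x_0,\cdot)/r,1\}$ and $r\geq r_0(d_*)$. The indicator datum $-M\one_{V\setminus A}$ does not work: $\Gamma(u_0)<\tfrac12$ forces $M<1$, far below the Harnack loss the cutoff height must beat.

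What genuinely needs a replacement for finite dimension is the centre lower bound $u_{t_0}(x_0)\geq-2\log2$ at a time $t_0\asymp r^2$. Via $\e^{u_t/2}\geq P_t\e^{u_0/2}$, this needs $\|P_tu_0-u_0\|_\infty\lesssim\sqrt t/r$, i.e.\ $\|\Delta P_tf\|_\infty^2\leq\frac{N}{2t}\|\Gamma(f)\|_\infty$. The paper obtains this as follows:
\begin{itemize}
\item factor $\Delta^2-2\Delta=A(A+2I)$ with $A=-\Delta$;
\item write $AP_tf=\mathcal RP_s(\Delta^2-2\Delta)P_{t-s}f$, where $\mathcal R=(A+2I)^{-1}$ is a positive kernel of row mass $\tfrac12$;
\item combine kernel Cauchy--Schwarz, the coercivity above, and the interpolation identity integrated over $s\in[0,t]$.
\end{itemize}

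Your replacement, ``finite propagation up to Gaussian tails'', cannot do this. Such tail bounds hold on every bounded-degree graph, including regular trees where the walk is ballistic. So localising mass in $B(x_0,2r)$ at time $\asymp r^2$ presupposes the volume control you are trying to prove.

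\textbf{The endgame.} Beyond the above, it needs three things:
\begin{itemize}
\item both exponential comparisons, $\e^{u_t/2}\geq P_t\e^{u_0/2}$ and $\e^{2u_t}\leq P_t\e^{2u_0}$, valid because $|u_t(y)-u_t(x)|<1$;
\item the Harnack inequality, to propagate the centre bound to $u_T\geq-Q_0$ on $B(x_0,2r)$ at $T\asymp r^2$;
\item mass conservation applied to $\e^{2u_0}-\e^{-2C}\in\ell^1$ rather than to the constant background, with $C-Q_0\geq1$ absorbing the background term.
\end{itemize}
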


Theorem~\ref{thm:doubling} is stronger at the conclusion level than the polynomial upper bound
proved under $\mathrm{CDE}(n,0)$ in \cite{BHLLMY} and than the conclusions conjectured in
\cite{CLP,BPS}: doubling controls the ratio of two consecutive scales and therefore supplies
both upper and lower interscale information. At the assumption level it removes the finite
classical dimension required in \cite{MunchCDN,RussPajot}, the
$\mathrm{CDE}'$ hypothesis used in \cite{HLLY}, and the edge-regularity used in
\cite{BPS}. Relative to \cite{BHLLMY}, the curvature conditions are different and no formal
implication is asserted; what is removed here is their finite-dimensional exponential
curvature hypothesis.

Iterating the estimate at dyadic radii gives the conjectured polynomial bound.

\begin{corollary}\label{cor:poly}
Suppose Assumptions~\ref{ass:degree} and~\ref{ass:curvature} hold. Then
\[
\#B(x,r)\leq r^{D(d_*)}
\]
for every $x\in V$ and every integer $r\geq1$, where one may take
\[
D(d_*):=\left\lceil2\log_2K(d_*)\right\rceil.
\]
\end{corollary}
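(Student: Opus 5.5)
The plan is to iterate Theorem~\ref{thm:doubling} along dyadic radii, starting from the trivial base case at radius one. With the open-ball convention we have $B(x,1)=\{x\}$, so $\#B(x,1)=1$. We may assume $K(d_*)\geq1$; indeed, taking $r=1$ in Theorem~\ref{thm:doubling} gives $K(d_*)\geq\#B(x,2)\geq1$. Consequently $D(d_*)=\lceil2\log_2K(d_*)\rceil\geq0$ is a well-defined nonnegative integer.

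The case $r=1$ is immediate, since $\#B(x,1)=1=1^{D}$.

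For an integer $r\geq2$, choose the integer $k\geq1$ with $2^k\leq r<2^{k+1}$. Monotonicity of balls gives $B(x,r)\subseteq B(x,2^{k+1})$. Applying Theorem~\ref{thm:doubling} successively at the integer radii $1,2,4,\dots,2^k$ yields
\[
\#B(x,r)\leq\#B(x,2^{k+1})\leq K^{k+1}\,\#B(x,1)=K^{k+1}.
\]

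It remains to compare $K^{k+1}$ with $r^D$. Since $k\geq1$ we have $k+1\leq2k$, and since $K\geq1$ this gives
\[
K^{k+1}\leq K^{2k}=2^{2k\log_2K}\leq2^{kD}=(2^k)^D\leq r^D.
\]
This is exactly where the factor $2$ in $D(d_*)$ is used: it absorbs the one extra doubling step needed to cover a radius lying strictly between two dyadic scales. Here $2\log_2K\leq D$ follows from the definition of the ceiling, and the last inequality uses $2^k\leq r$ together with $D\geq0$.

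There is no genuine obstacle. The only points requiring care are the base case $r=1$, where the crude bound $K^{k+1}$ would fail, and the open-ball convention, which makes $\#B(x,1)=1$ rather than $1+\deg(x)$.
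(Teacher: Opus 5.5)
Your proof is correct and follows essentially the same route as the paper: dyadic iteration of Theorem~\ref{thm:doubling} from the base case $B(x,1)=\{x\}$, with the factor $2$ in the exponent absorbing one extra doubling step. The only difference is bookkeeping. The paper takes $k=\lceil\log_2 r\rceil$ and absorbs the spare factor via $K\le r^{\log_2 K}$ for $r\ge2$, whereas you take $2^k\le r<2^{k+1}$ and use $K^{k+1}\le K^{2k}$ for $k\ge1$.
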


\subsection{The local Poincar\'e problem}

For a nonempty finite set $D\subset V$, define
\[
f_D:=\frac1{\#D}\sum_{x\in D}f(x)
\]
and
\[
\mathcal E_D(f)
 :=\sum_{\substack{\{x,y\}\in E\\x,y\in D}}
 \bigl(f(x)-f(y)\bigr)^2.
\]
The weak scale-invariant ball Poincar\'e inequality asks for constants $C$ and a fixed dilation
$\lambda$ such that
\[
\sum_{z\in B(x,r)}|f(z)-f_{B(x,r)}|^2
\leq Cr^2\mathcal E_{B(x,\lambda r)}(f)
\]
uniformly in $x$, positive integer $r$, and $f$. The word ``weak'' refers only to the enlarged
ball on the right-hand side.

On uniformly elliptic lazy reversible graphs, Delmotte proved the quantitative equivalence
between volume doubling plus this ball Poincar\'e inequality, parabolic Harnack inequalities,
and two-sided discrete-time Gaussian heat-kernel estimates \cite{Delmotte}. In particular, the
ball Poincar\'e inequality is an independent localisation requirement; it is not a formal
consequence of volume doubling. The finite-dimensional modified-heat results
\cite{MunchCDN,RussPajot} provide the volume input but, prior to the present work, did not give
a general ball Poincar\'e theorem for classical $\mathrm{CD}(0,n)$. Nor do the pointwise
semigroup reverse Poincar\'e estimates of \cite{LinLiu,KellerMunch}, the pseudo-Poincar\'e
estimate in \cite{LiuBuser}, the global Buser inequalities in \cite{KKRT,LiuBuser}, or the
conical-curvature global inequality in \cite{LakzianMcGuirk} furnish the required localisation
on arbitrary balls.

The strongest previous general positive theorem is the result of Horn, Lin, Liu, and Yau:
$\mathrm{CDE}'(n,0)$ gives doubling, and together with $\Delta(\alpha)$ it gives the ball
Poincar\'e inequality, Gaussian estimates, and parabolic Harnack inequalities
\cite{HLLY}. The present theorem reaches the ball Poincar\'e conclusion from the classical
dimension-free curvature condition together with the standing maximum-degree bound.

\begin{theorem}\label{thm:poincare-two}
Suppose Assumptions~\ref{ass:degree} and~\ref{ass:curvature} hold. Then there is a finite
constant $C_P(d_*)$ such that
\[
\sum_{z\in B(x,r)}\left|f(z)-f_{B(x,r)}\right|^2
\leq C_P(d_*)\,r^2\,\mathcal E_{B(x,2r)}(f)
\]
for every $x\in V$, every integer $r\geq1$, and every function $f:V\to\R$.
\end{theorem}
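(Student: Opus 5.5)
The plan is to derive the ball Poincaré inequality from a heat-kernel (semigroup) argument, using Theorem~\ref{thm:doubling} as the volume input. The structure is the classical one: a pointwise semigroup gradient bound from $\mathrm{CD}(0,\infty)$, a lower bound on heat-kernel mass near the starting point, and a localisation to the ball $B(x,2r)$.

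\textbf{Step 1: reverse Poincaré estimate.} Let $P_t=e^{t\Delta}$ be the heat semigroup. On a bounded-degree graph, $\Delta$ is a bounded operator, so Assumption~\ref{ass:curvature} yields the standard pointwise bound (Lin--Liu)
\[
P_t(f^2)-(P_tf)^2 \le 2t\,P_t\Gamma(f).
\]
The proof differentiates $s\mapsto P_s\bigl((P_{t-s}f)^2\bigr)$ and uses $\Gamma(P_uf)\le P_u\Gamma(f)$. Take $t=\varepsilon r^2$ and sum over $z\in B(x,r)$ to get
\[
\sum_{z\in B}\bigl(P_t(f^2)(z)-(P_tf(z))^2\bigr)\le 2t\sum_{z\in B}P_t\Gamma(f)(z).
\]

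\textbf{Step 2: localisation.} The right-hand side of Step~1 involves $\Gamma(f)$ at every vertex reached by the walk from $B(x,r)$, not only edges inside $B(x,2r)$. To fix this, replace $f$ by the function $f^\sharp$ obtained by running the continuous-time walk killed on exiting $B(x,2r)$.
\begin{itemize}
\item The diffusive exit-time estimate says that a walk started in $B(x,r)$ leaves $B(x,2r)$ before time $\varepsilon r^2$ with probability at most $\exp(-c/\varepsilon)$. For a bounded-degree graph this follows from Poisson/Chernoff bounds on the number of jumps, which must be at least $r$, combined with a Carne--Varopoulos bound.
\item With this, compare $P_t$ with the Dirichlet semigroup $P^{D}_t$ on $D=B(x,2r)$. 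This gives
\[
\sum_{z\in B(x,r)}\bigl|P^D_tf(z)-f(z)\bigr|^2 \le C t\,\mathcal E_D(f)
\]
from the energy identity $\frac{d}{dt}\|P^D_tf\|^2$, after first subtracting a constant so the error terms are controlled.
\item For the Poincaré inequality, write $f-f_B$ as $(f-P^{N}_tf)+(P^{N}_tf-\text{const})$, where $P^N_t$ is the Neumann semigroup on $D$ (Markov and symmetric on $D$). The first term is bounded by $t\,\mathcal E_D(f)$ through the spectral theorem.
\end{itemize}

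\textbf{Step 3: oscillation of $P^N_tf$ (main obstacle).} It remains to control the oscillation of $P^N_tf$ on $B(x,r)$ by $t\,\mathcal E_D(f)/\#B$. Here curvature enters, but the Neumann semigroup on a ball does not inherit $\mathrm{CD}(0,\infty)$. I would therefore work with the global $P_t$ and transfer to $D$ using the exit-time bound, with the error controlled by $e^{-c/\varepsilon}$ times $\|f\|^2$ on a larger region. That error is not an energy term, so this is the expected main difficulty. I would handle it by first proving a weak inequality with dilation $\lambda$ large, using a cutoff $f\chi$ with $\chi$ Lipschitz on scale $r$. The dilation would then be reduced to $2$ by the standard Whitney-covering/chaining argument (Jerison), which needs only volume doubling from Theorem~\ref{thm:doubling} and the fact that graph balls are connected through geodesics, i.e.\ the length-space property.

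\textbf{Step 4: heat-kernel mass lower bound and conclusion.} To turn $\sum_B P_t\Gamma(f)$ plus the variance of $P_tf$ into the left-hand side, I need $p_t(z,w)\ge c/\#B(x,r)$ for $z,w\in B(x,r)$ and $t\asymp r^2$. I would obtain this from the point-mass consequences of $\Gamma_2\ge0$ and positive-resolvent smoothing developed for Theorem~\ref{thm:doubling}, which give an on-diagonal lower bound together with a Harnack-type comparison. Given such a bound,
\[
\sum_{z\in B}|f-f_B|^2\le \frac{C\,\#B}{c}\sum_z p_t(x_0,z)\,|f(z)-P_tf(x_0)|^2 ,
\]
and Step~1 together with the localisation of Step~2 bounds the right-hand side by $Cr^2\,\mathcal E_{B(x,2r)}(f)$. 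The constants depend only on $d_*$ through $K(d_*)$, $\varepsilon$, and the degree bound.
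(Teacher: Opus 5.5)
The outer frame matches the paper: a weak Poincar\'e inequality with a large fixed dilation, upgraded to dilation two by covering and chaining with Theorem~\ref{thm:doubling}. Your spectral bound $\ip{f}{(I-P^N_t)f}\leq t\,\mathcal E_D(f)$ for the Neumann (censored) semigroup is also exactly the paper's final step. But the fixed-dilation inequality itself is never proved, and this is a genuine gap.

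\textbf{Steps 3 and 4.} Step~3 leaves the localisation open, and the cutoff $f\chi$ does not close it. The energy of $(f-c)\chi$ contains $r^{-2}\sum(f-c)^2$ over an annulus, which needs a Poincar\'e inequality on a larger ball, so the argument regresses. Step~4 rests on a pointwise bound $p_t(z,w)\geq c/\#B(x,r)$ that nothing available supplies. The Harnack inequality behind Theorem~\ref{thm:doubling} is for the nonlinear modified flow, not the linear heat equation. Given doubling, such a near-diagonal lower bound is essentially as strong as the theorem you want. Even granting it, turning $P_t\Gamma(f)(x_0)$ into $\mathcal E_{B(x,2r)}(f)/\#B$ needs a matching upper bound and a way to discard $\Gamma(f)$ outside $B(x,2r)$, where $f$ is arbitrary.

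\textbf{Step 2.} The exit-time claim cannot come from the degree bound alone. On the $d$-regular tree with $d\geq3$ the walk is ballistic, so it leaves $B(x,2r)$ before time $\varepsilon r^2$ with probability tending to one. Carne--Varopoulos gives a Gaussian factor without the $1/\#B(x,\sqrt t)$ normalisation, so summing it over the exterior produces volume factors unbounded in $r$. Curvature must enter here. The paper applies \eqref{eq:semigroup-shift} to the truncated distance to get $\mathbb E_x\dist(x,X_t)\leq\sqrt{Nd_*t}$. A strong-Markov maximal argument (Lemma~\ref{lem:exit-control}) then gives an exit bound that is polynomial rather than exponential, but sufficient.

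\textbf{The missing idea} is to localise the kernel, not the function. Use the reverse inequality $P_t(h^2)-(P_th)^2\geq2t\,\Gamma(P_th)$ with $0\leq h\leq1$; this is the opposite direction from your Step~1. It gives $\norm{p_t(x,\cdot)-p_t(y,\cdot)}_{\TV}\leq\dist(x,y)/(2\sqrt t)$, hence row overlap at most $1/2$ on $B(o,r)$ for $t=4r^2$. Then couple the global walk with the censored walk on $D=B(o,\Lambda r)$ so that they agree until exit. The exit probability then enters only as an additive total-variation error, which is bounded and independent of $f$, so the censored row overlap is at most $3/4$. The posterior kernel $\mathsf Q=\mathsf K\Pi$ on $B$ and Dobrushin contraction (Lemma~\ref{lem:kernel-localisation}) convert this into $\sum_B\abs{f-f_B}^2\leq8\ip{f}{(I-P^D_t)f}_{\ell^2(D)}\leq32r^2\mathcal E_D(f)$. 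This step needs no pointwise heat-kernel bounds and no volume doubling. Doubling enters only in the reduction to dilation two.
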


The closest results and the precise assumption--conclusion comparison are summarised in
Table~\ref{tab:comparison}. The table separates curvature hypotheses that are sometimes
conflated in the literature.

\begin{table}[t]
\caption{Comparison with the closest volume and Poincar\'e results.}
\label{tab:comparison}
\small
\setlength{\tabcolsep}{4pt}
\renewcommand{\arraystretch}{1.16}
\begin{tabularx}{\textwidth}{@{}>{\raggedright\arraybackslash}p{0.20\textwidth}
>{\raggedright\arraybackslash}X >{\raggedright\arraybackslash}X@{}}
\toprule
Work & Assumptions & Conclusions relevant here \\
\midrule
Bauer et al.\ \cite{BHLLMY}
& Bounded geometry and the finite-dimensional nonlinear condition $\mathrm{CDE}(n,0)$
& Li--Yau/Harnack estimates and polynomial upper growth; no general scale-uniform doubling or ball Poincar\'e theorem without additional strong cut-offs. \\
Dier--Kassmann--Zacher \cite{DKZ}
& The nonlinear condition $\mathrm{CD}(F;0)$, tailored to a discrete logarithmic Li--Yau method
& Li--Yau and differential Harnack estimates for positive standard-heat solutions; no volume-doubling or metric-ball Poincar\'e theorem under classical $\mathrm{CD}(0,\infty)$. \\
Lin--Liu and Keller--M\"unch \cite{LinLiu,KellerMunch}
& Classical $\mathrm{CD}(0,\infty)$, in bounded-generator and suitable unbounded-generator settings
& Heat-semigroup gradient and variance inequalities, including pointwise reverse Poincar\'e estimates; no volume comparison or local ball Poincar\'e conclusion by themselves. \\
Horn--Lin--Liu--Yau \cite{HLLY}
& $\mathrm{CDE}'(n,0)$; for the full heat-kernel package also $\Delta(\alpha)$
& Volume doubling; with $\Delta(\alpha)$, two-sided Gaussian bounds, weak ball Poincar\'e, and parabolic Harnack. \\
M\"unch; Pajot and Russ \cite{MunchCDN,RussPajot}
& Classical $\mathrm{CD}(0,n)$ with $n<\infty$ and bounded geometry
& Volume doubling via the modified nonlinear heat equation; no general standard-kernel ball Poincar\'e theorem in those works. \\
Blachar--Pajot--Salez \cite{BPS}
& Classical $\mathrm{CD}(0,\infty)$ plus edge-regularity
& Finite-dimensional self-improvement and volume doubling; the general edge-regular ball Poincar\'e problem is not supplied by self-improvement alone. \\
Present paper
& Classical $\mathrm{CD}(0,\infty)$ on connected simple graphs with the unnormalised
counting-measure Laplacian, together with a maximum-degree bound
& Uniform volume doubling and a weak $L^2$ ball Poincar\'e inequality with the conventional dilation $2$. \\
\bottomrule
\end{tabularx}
\end{table}
\FloatBarrier

\begin{remark}
The case $d_*=0$ is trivial. If $d_*\geq1$, consider the lazy discrete-time kernel
\[
  \widetilde Q:=I+\frac{\Delta}{2d_*}.
\]
Every edge transition has probability $1/(2d_*)$, and every holding probability is at least
$1/2$. Its Dirichlet form is $(2d_*)^{-1}\mathcal E$, so
Theorems~\ref{thm:doubling} and~\ref{thm:poincare-two} provide Delmotte's volume-doubling and
weak Poincar\'e hypotheses with constants depending only on $d_*$. Delmotte's equivalence
\cite{Delmotte} therefore yields the corresponding two-sided discrete-time Gaussian heat-kernel
estimates and standard parabolic Harnack inequality. Thus, under the standing bounded-degree
framework, the conclusion-level package of \cite{HLLY} is recovered from the weaker classical
dimension-free curvature hypothesis.
\end{remark}

\subsection{Why the proof does not use finite-dimensional self-improvement}

A natural route to Corollary~\ref{cor:poly} would be a local finite-dimensional
self-improvement. At a vertex $x$, restrict functions to the finite two-ball and set
\[
Q_x(f,g):=\Gamma_2(f,g)(x),
\qquad
\ell_x(f):=\Delta f(x).
\]
Under Assumption~\ref{ass:curvature}, the form $Q_x$ is positive semidefinite. A finite local
dimension $n_x$ with
\[
Q_x(f,f)\geq\frac{\ell_x(f)^2}{n_x}
\]
exists precisely when
\[
\ker Q_x\subseteq\ker\ell_x;
\]
in that case the optimal value is
\[
n_x^{\mathrm{opt}}
 =\sup_{Q_x(f,f)>0}\frac{\ell_x(f)^2}{Q_x(f,f)}<\infty.
\]
The edge-regular calculation of Blachar, Pajot, and Salez proves this kernel inclusion with an
optimal dimension depending on the degree and common-neighbour parameter, and they
conjectured that every $d$-regular $\mathrm{CD}(0,\infty)$ graph satisfies
$\mathrm{CD}(0,d)$ \cite[Conjecture~2]{BPS}.\footnote{In a private preprint (available upon request), we disprove Conjecture~2 of \cite{BPS}: for each $d\geq5$ there is a
	finite connected $d$-regular $\mathrm{CD}(0,\infty)$ graph satisfying no
	$\mathrm{CD}(0,n)$ with $n<\infty$.} The available finite-dimensional
self-improvement therefore requires additional local structure and is not available under the
standing bounded-degree assumptions. Accordingly, the present proof does not proceed through
a global finite-dimensional reduction.

Instead, we extract only the two consequences of the positive semidefinite form $Q_x$ that are
needed by the modified-heat argument. Testing $Q_x$ against the point mass at $x$ gives an
effective dimension estimate at a negative local minimum of the Laplacian. Factoring the same
coercive expression through a positive resolvent gives an $L^\infty$ estimate for $\Delta P_tf$. The
first estimate enters a modified Li--Yau maximum principle; the second controls semigroup
displacement. Together with two exponential comparisons for the modified heat flow, they yield
a Harnack inequality and then volume doubling.

The additional difficulty in Theorem~\ref{thm:poincare-two} is localisation. The standard heat
kernel has global support, whereas the desired right-hand side contains only the energy in
$B(x,2r)$. The classical reverse Poincar\'e estimate gives overlap of nearby heat-kernel rows,
the resolvent estimate gives a diffusive first-moment displacement bound, and a strong-Markov
argument converts this into exit-time control. Coupling with the finite-state chain obtained by
rejecting boundary-crossing jumps, followed by a finite-dimensional kernel-localisation lemma,
then yields a scale-invariant Poincar\'e inequality with a fixed degree-dependent dilation.
Theorem~\ref{thm:doubling} is used only to reduce that dilation to two.

The paper is organised as follows. Section~\ref{sec:preliminary} records the heat
semigroup generated directly by $\Delta$ and the basic $\Gamma$-calculus. Section~\ref{sec:point-resolvent} proves the
point-mass and resolvent estimates. Section~\ref{sec:modified-heat} develops the modified heat
equation and derives the Li--Yau and Harnack inequalities. Section~\ref{sec:doubling-proof}
proves Theorem~\ref{thm:doubling} and Corollary~\ref{cor:poly}.
Section~\ref{sec:poincare-localisation} establishes heat-kernel localisation and the
fixed-dilation Poincar\'e inequality, and Section~\ref{sec:poincare-proof} proves
Theorem~\ref{thm:poincare-two}. Appendix~\ref{sec:max-principle} contains the maximum
principle used on infinite graphs.

\section{Preliminary}\label{sec:preliminary}

Because the maximum degree is bounded, the unnormalised combinatorial Laplacian is already a
bounded operator on $\ell^\infty(V)$. We use $\Delta$ as the sole ambient generator in all
curvature and nonlinear-flow arguments, with heat semigroup $P_t=e^{t\Delta}$.
Uniformisation is used only to verify the Markov properties of $P_t$. A finite-volume censored
restriction $\Delta_D$ will be introduced in Section~\ref{sec:poincare-localisation} solely for
the localisation argument. We also record the $\ell^\infty$-norm continuity and differentiability
needed later, and write $d_x:=\deg(x)$.

The cases $d_*\le1$ are immediate: a connected simple graph of maximum degree at most one has at most two vertices. They will be included in the final constant by setting $K(d_*)=2$ in those cases. Henceforth assume
\begin{equation}
  d_*\ge2,
  \qquad
  N:=d_*(d_*+3).
  \label{eq:N-definition}
\end{equation}
The symbol $N$ is an effective parameter for the estimates below; it is not a claim that the graph satisfies the global condition $\mathrm{CD}(0,N)$.

Because $d_*<\infty$, the operator $\Delta$ is bounded on $\ell^\infty(V)$ and
\begin{equation}
  \norm{\Delta}_{\ell^\infty\to\ell^\infty}\le2d_*.
  \label{eq:Delta-bound}
\end{equation}
Define the heat semigroup directly by
\begin{equation}
  P_t:=\e^{t\Delta},
  \qquad t\ge0.
  \label{eq:heat-semigroup}
\end{equation}
Its Markov properties follow, for example, from the uniformisation formula
\begin{equation}
  P_t
  =\e^{-d_*t}
   \sum_{n=0}^\infty
   \frac{(d_*t)^n}{n!}
   \left(I+\frac{\Delta}{d_*}\right)^n.
  \label{eq:poisson-expansion}
\end{equation}
Indeed, $I+d_*^{-1}\Delta$ is the symmetric stochastic operator with edge transition probabilities $d_*^{-1}$ and holding probability $1-d_x/d_*$ at $x$. Thus $P_t$ has a symmetric stochastic kernel $p_t(x,y)$, is a positive contraction on every $\ell^q(V)$, $1\le q\le\infty$, is self-adjoint on $\ell^2(V)$, and satisfies $P_t\one=\one$. The use of \eqref{eq:poisson-expansion} is only to verify these semigroup properties; all subsequent differential and curvature calculations are performed with the single Laplacian $\Delta$.

Because $G$ is connected and locally finite, $V$ is countable. Symmetry and stochasticity therefore imply column stochasticity. In particular, for every nonnegative $h\in\ell^1(V)$, Tonelli's theorem gives
\begin{equation}
  \sum_{x\in V}P_th(x)
  =\sum_{y\in V}h(y)\sum_{x\in V}p_t(x,y)
  =\sum_{y\in V}h(y).
  \label{eq:mass-conservation}
\end{equation}
All semigroup differentiations below are bounded-operator differentiations on $\ell^\infty(V)$.
More precisely, $t\mapsto P_t$ is real analytic in operator norm, and becomes entire after
complexification, with
\[
  \frac{\dd}{\dd t}P_t=\Delta P_t=P_t\Delta.
\]
The bilinear maps $\Gamma$ and $\Gamma_2$ are bounded:
\begin{align}
  \norm{\Gamma(f,g)}_\infty
  &\le 2d_*\norm{f}_\infty\norm{g}_\infty,
  \label{eq:Gamma-bilinear-bound}\\
  \norm{\Gamma_2(f,g)}_\infty
  &\le 6d_*^2\norm{f}_\infty\norm{g}_\infty.
  \label{eq:Gamma2-bilinear-bound}
\end{align}
Consequently their associated quadratic maps are $C^\infty$, with
\[
  D[\Gamma(\,\cdot\,)](u)h=2\Gamma(u,h),
  \qquad
  D[\Gamma_2(\,\cdot\,)](u)h=2\Gamma_2(u,h).
\]
For every $\gamma\in\R$, the pointwise exponential map
$E_\gamma(u):=\e^{\gamma u}$ is $C^\infty$ on $\ell^\infty(V)$ and
\[
  DE_\gamma(u)h=\gamma\e^{\gamma u}h.
\]
Thus all differentiations below are ordinary Fr\'echet differentiations of norm-$C^1$
Banach-valued curves. Every Banach-valued time integral is a Bochner integral in the space
indicated. In particular, the interpolation integrals are $\ell^\infty(V)$-valued, whereas the
resolvent integral is taken in operator norm in $\mathcal B(\ell^\infty(V))$.

The pointwise edge formula for the carr\'e du champ is
\begin{equation}
  \Gamma(f,g)(x)
  =\frac12\sum_{y\sim x}
    \bigl(f(y)-f(x)\bigr)\bigl(g(y)-g(x)\bigr).
  \label{eq:edge-gamma}
\end{equation}
We assume $\mathrm{CD}(0,\infty)$ from now on:
\begin{equation}
  \Gamma_2(f)(x)\ge0
  \qquad(f:V\to\R,\ x\in V).
  \label{eq:standing-CD}
\end{equation}
The first identity below is the defining relation for $\Gamma_2$, rewritten:
\begin{equation}
  \Delta\Gamma(f)=2\Gamma(f,\Delta f)+2\Gamma_2(f).
  \label{eq:bochner}
\end{equation}
The second is the standard semigroup interpolation identity. See \cite{BGL} for the general
$\Gamma$-calculus framework and \cite{LinLiu} for graph-specific semigroup formulations. For
bounded $f$ and $t>0$,
\begin{equation}
  P_t\Gamma(f)-\Gamma(P_tf)
  =2\int_0^t P_s\Gamma_2(P_{t-s}f)\,\dd s.
  \label{eq:interpolation}
\end{equation}
Indeed, differentiating in $\ell^\infty(V)$ the function
\[
  \Phi(s):=P_s\Gamma(P_{t-s}f)
\]
gives $\Phi'(s)=2P_s\Gamma_2(P_{t-s}f)$. The integrand in \eqref{eq:interpolation} is norm-continuous: $s\mapsto P_{t-s}f$ is norm-continuous and $\Gamma_2$ is a continuous quadratic map from $\ell^\infty(V)$ to itself because $\Delta$ is bounded. Thus the integral is an ordinary Bochner integral in $\ell^\infty(V)$.

\section{Point-mass estimates and resolvent smoothing}\label{sec:point-resolvent}

This section extracts the two finite-dimensional substitutes used in the volume and localisation
arguments. First, a point-mass test of the positive semidefinite form
$\Gamma_2(\cdot,\cdot)(x)$ gives a second-order coercive inequality and an effective dimension
estimate at a negative local minimum of $\Delta f$. Second, a positive resolvent turns the same
coercive expression into a global $L^\infty$ estimate for $\Delta P_t f$. The first estimate is
used in the Li--Yau maximum principle, and the second controls semigroup displacement and
exit-time localisation.

\subsection{Point-mass estimates}\label{subsec:point-mass}

For $x\in V$, write $e_x:=\one_{\{x\}}$. All quantities in the next lemma depend only on the restriction of $f$ to the two-ball about $x$, so no boundedness assumption on $f$ is needed. The identity is the algebraic core of the proof.

\begin{lemma}[Point-mass identities]
\label{lem:point-mass}
For every $x\in V$ and every function $f:V\to\R$,
\begin{align}
  4\Gamma_2(f,e_x)(x)
  &=\Delta^2f(x)-2\Delta f(x),
  \label{eq:point-mass-bilinear}\\
  4\Gamma_2(e_x)(x)
  &=d_x(d_x+3).
  \label{eq:point-mass-quadratic}
\end{align}
\end{lemma}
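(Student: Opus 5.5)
The plan is a direct local computation from the definition
\[
\Gamma_2(f,g)=\tfrac12\bigl(\Delta\Gamma(f,g)-\Gamma(f,\Delta g)-\Gamma(g,\Delta f)\bigr)
\]
with $g=e_x$, evaluated at $x$, using the edge formula \eqref{eq:edge-gamma}. Everything involved depends only on values of $f$ in the two-ball about $x$, so all sums are finite and no boundedness is needed. I will prove the bilinear identity \eqref{eq:point-mass-bilinear} first. The quadratic identity \eqref{eq:point-mass-quadratic} then follows by specialising to $f=e_x$.

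First I tabulate the auxiliary functions near $x$.
\begin{itemize}
\item $\Delta e_x$ equals $-d_x$ at $x$, equals $1$ at each neighbour $y\sim x$, and vanishes elsewhere.
\item By \eqref{eq:edge-gamma}, $\Gamma(f,e_x)(x)=-\tfrac12\Delta f(x)$.
\item For $y\sim x$, only the edge $yx$ contributes, so $\Gamma(f,e_x)(y)=\tfrac12\bigl(f(x)-f(y)\bigr)$.
\item $\Gamma(f,e_x)$ vanishes at distance at least two from $x$.
\end{itemize}

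From these values I compute the three terms at $x$.
\begin{itemize}
\item Summing over $y\sim x$,
\[
\Delta\Gamma(f,e_x)(x)=\sum_{y\sim x}\Bigl[\tfrac12\bigl(f(x)-f(y)\bigr)+\tfrac12\Delta f(x)\Bigr]=\tfrac12(d_x-1)\Delta f(x).
\]
\item Since $\Delta e_x(y)-\Delta e_x(x)=1+d_x$ for every $y\sim x$, we get $\Gamma(f,\Delta e_x)(x)=\tfrac12(1+d_x)\Delta f(x)$.
\item Since $e_x(y)-e_x(x)=-1$ for every $y\sim x$, we get $\Gamma(e_x,\Delta f)(x)=-\tfrac12\Delta^2 f(x)$.
\end{itemize}

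Substituting into the definition gives
\[
2\Gamma_2(f,e_x)(x)=\tfrac12(d_x-1)\Delta f(x)-\tfrac12(d_x+1)\Delta f(x)+\tfrac12\Delta^2f(x)=\tfrac12\Delta^2 f(x)-\Delta f(x).
\]
Multiplying by two yields \eqref{eq:point-mass-bilinear}.

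For \eqref{eq:point-mass-quadratic}, put $f=e_x$. We have $\Delta e_x(x)=-d_x$ and
\[
\Delta^2 e_x(x)=\sum_{y\sim x}\bigl(\Delta e_x(y)-\Delta e_x(x)\bigr)=d_x(1+d_x).
\]
Hence $4\Gamma_2(e_x)(x)=d_x(d_x+1)+2d_x=d_x(d_x+3)$.

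There is no real obstacle here. The only point needing care is the bookkeeping of $\Gamma(f,e_x)$ at the neighbours versus at $x$ itself. In particular, a neighbour $y$ sees $e_x$ only through the single edge $yx$, so only that edge contributes to $\Gamma(f,e_x)(y)$; this is where the $d_x-1$ comes from, and a sign error there would spoil the cancellation of the $d_x$ terms. The simple-graph assumption ensures there are no multiple edges to count.
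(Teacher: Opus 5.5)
Your proposal is correct and follows the paper's proof essentially line by line. You use the same tabulation of $\Delta e_x$ and $\Gamma(f,e_x)$ at $x$ and its neighbours, the same three term computations giving $\tfrac12(d_x-1)\Delta f(x)$, $\tfrac12(d_x+1)\Delta f(x)$ and $-\tfrac12\Delta^2f(x)$, and the same specialisation $f=e_x$ with $\Delta^2e_x(x)=d_x(d_x+1)$.
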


\begin{proof}
At the vertex $x$,
\[
  \Delta e_x(x)=-d_x,
  \qquad
  \Delta e_x(y)=1\quad(y\sim x),
  \qquad
  \Gamma(f,e_x)(x)=-\frac12\Delta f(x).
\]
For $y\sim x$, only the edge from $y$ to $x$ contributes to the $e_x$-difference, and hence
\[
  \Gamma(f,e_x)(y)=\frac12\bigl(f(x)-f(y)\bigr).
\]
It follows term by term that
\begin{align*}
  \Delta\Gamma(f,e_x)(x)
  &=\sum_{y\sim x}
    \left[
      \frac12\bigl(f(x)-f(y)\bigr)
      +\frac12\Delta f(x)
    \right]
    =\frac{d_x-1}{2}\Delta f(x),\\
  \Gamma(f,\Delta e_x)(x)
  &=\frac12\sum_{y\sim x}
    \bigl(f(y)-f(x)\bigr)(1+d_x)
    =\frac{d_x+1}{2}\Delta f(x),\\
  \Gamma(e_x,\Delta f)(x)
  &=-\frac12\sum_{y\sim x}
    \bigl(\Delta f(y)-\Delta f(x)\bigr)
    =-\frac12\Delta^2f(x).
\end{align*}
Substitution into the bilinear definition of $\Gamma_2$ yields \eqref{eq:point-mass-bilinear}. Taking $f=e_x$ and computing
\[
  \Delta^2e_x(x)
  =\sum_{y\sim x}\bigl(\Delta e_x(y)-\Delta e_x(x)\bigr)
  =\sum_{y\sim x}(1+d_x)
  =d_x(d_x+1)
\]
gives \eqref{eq:point-mass-quadratic}.
\end{proof}

At a fixed vertex $x$, the map
\[
  Q_x(f,g):=\Gamma_2(f,g)(x)
\]
is a symmetric positive semidefinite bilinear form by \eqref{eq:standing-CD}. Hence it obeys Cauchy--Schwarz: applying $Q_x(f+tg,f+tg)\ge0$ for every $t\in\R$ and inspecting the discriminant gives
\[
  Q_x(f,g)^2\le Q_x(f,f)Q_x(g,g).
\]
Combining this observation with Lemma~\ref{lem:point-mass} gives the following coercivity estimate.

\begin{proposition}[Second-order coercivity]
\label{prop:second-order-coercivity}
For every $x\in V$ and every $f:V\to\R$,
\begin{equation}
  \bigl(\Delta^2f(x)-2\Delta f(x)\bigr)^2
  \le4d_x(d_x+3)\Gamma_2(f)(x).
  \label{eq:local-coercivity}
\end{equation}
In particular,
\begin{equation}
  \bigl(\Delta^2f-2\Delta f\bigr)^2
  \le4N\Gamma_2(f)
  \label{eq:global-coercivity}
\end{equation}
pointwise on $V$.
\end{proposition}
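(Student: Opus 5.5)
The plan is to apply the Cauchy--Schwarz inequality for the positive semidefinite form $Q_x$, established just above, to the pair $(f,e_x)$, and then insert the two point-mass identities of Lemma~\ref{lem:point-mass}. Concretely, fix $x\in V$ and $f:V\to\R$. Only the restriction of $f$ to the two-ball about $x$ enters, so $f$ may be taken finitely supported. This makes $Q_x$ a genuine symmetric bilinear form on a finite-dimensional space, and Assumption~\ref{ass:curvature} makes it positive semidefinite. Cauchy--Schwarz then gives
\[
  Q_x(f,e_x)^2\le Q_x(f,f)\,Q_x(e_x,e_x).
\]

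Next multiply both sides by $16$. On the left, \eqref{eq:point-mass-bilinear} identifies $4Q_x(f,e_x)$ with $\Delta^2f(x)-2\Delta f(x)$, so the left side becomes $\bigl(\Delta^2f(x)-2\Delta f(x)\bigr)^2$. On the right, \eqref{eq:point-mass-quadratic} identifies $4Q_x(e_x,e_x)$ with $d_x(d_x+3)$, so the right side becomes $4\,d_x(d_x+3)\,\Gamma_2(f)(x)$. This is exactly \eqref{eq:local-coercivity}.

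For the global form \eqref{eq:global-coercivity}, note that $d_x\le d_*$ and that $t\mapsto t(t+3)$ is increasing on $[0,\infty)$. Hence $d_x(d_x+3)\le d_*(d_*+3)=N$ by \eqref{eq:N-definition}. Since $\Gamma_2(f)(x)\ge0$ by \eqref{eq:standing-CD}, the right-hand side can only increase when $d_x(d_x+3)$ is replaced by $N$, and this holds at every vertex.

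There is no real obstacle here. The one point needing care is that the Cauchy--Schwarz step requires $Q_x$ to be symmetric and positive semidefinite on the relevant space; this is guaranteed by locality of $\Gamma_2$ together with Assumption~\ref{ass:curvature}, including for unbounded $f$.
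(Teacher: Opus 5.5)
Your proposal is correct and is essentially the paper's proof: Cauchy--Schwarz for the positive semidefinite form $Q_x$ applied to the pair $(f,e_x)$, followed by substitution of the two point-mass identities and multiplication by $16$. The global bound then follows from $d_x(d_x+3)\le N$ together with $\Gamma_2(f)(x)\ge0$.
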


\begin{proof}
Cauchy--Schwarz for $Q_x$ and Lemma~\ref{lem:point-mass} give
\[
  \frac1{16}\bigl(\Delta^2f(x)-2\Delta f(x)\bigr)^2
  =Q_x(f,e_x)^2
  \le Q_x(f,f)Q_x(e_x,e_x)
  =\frac{d_x(d_x+3)}4\Gamma_2(f)(x).
\]
This proves \eqref{eq:local-coercivity}; \eqref{eq:global-coercivity} follows from $d_x(d_x+3)\le N$.
\end{proof}

The first targeted finite-dimensional consequence occurs at extrema of $\Delta f$.

\begin{proposition}[Extremal curvature--dimension inequality]
\label{prop:extremal-CD}
Let $x\in V$ and let $f:V\to\R$ satisfy
\begin{equation}
  \Delta f(x)<0,
  \qquad
  \Delta f(y)\ge\Delta f(x)
  \quad\text{for every }y\sim x.
  \label{eq:exact-extremum}
\end{equation}
Then
\begin{equation}
  \Gamma_2(f)(x)
  \ge\frac{(\Delta f(x))^2}{d_x(d_x+3)}
  \ge\frac{(\Delta f(x))^2}{N}.
  \label{eq:exact-extremal-CD}
\end{equation}
More generally, if $\eta\ge0$ and
\begin{equation}
  \Delta f(y)-\Delta f(x)\ge-\eta
  \qquad(y\sim x),
  \label{eq:approx-extremum}
\end{equation}
then
\begin{equation}
  \Gamma_2(f)(x)
  \ge
  \frac{\pos{-\Delta f(x)-\frac12d_x\eta}^{2}}
       {d_x(d_x+3)}.
  \label{eq:approx-extremal-CD}
\end{equation}
\end{proposition}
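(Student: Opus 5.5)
The plan is to apply the local coercivity estimate \eqref{eq:local-coercivity} of Proposition~\ref{prop:second-order-coercivity} and to bound $\Delta^2 f(x)-2\Delta f(x)$ from below by a positive quantity. Write $g:=\Delta f$. By definition,
\[
  \Delta^2 f(x)=\Delta g(x)=\sum_{y\sim x}\bigl(g(y)-g(x)\bigr).
\]

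Under the approximate extremum hypothesis \eqref{eq:approx-extremum}, each summand is at least $-\eta$, so $\Delta^2 f(x)\ge -d_x\eta$. Hence
\[
  \Delta^2 f(x)-2\Delta f(x)\ge 2\Bigl(-\Delta f(x)-\tfrac12 d_x\eta\Bigr).
\]
If the bracket is nonpositive, then \eqref{eq:approx-extremal-CD} holds trivially, because its right-hand side vanishes and $\Gamma_2(f)(x)\ge0$ by \eqref{eq:standing-CD}. Otherwise the left-hand side is positive and at least $2(-\Delta f(x)-\frac12 d_x\eta)_+$. Squaring and inserting into \eqref{eq:local-coercivity} gives
\[
  4\bigl(-\Delta f(x)-\tfrac12 d_x\eta\bigr)_+^2\le 4d_x(d_x+3)\Gamma_2(f)(x).
\]
This is \eqref{eq:approx-extremal-CD}. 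Note that $d_x\ge1$ here, since the hypothesis $\Delta f(x)<0$ in the exact case forces $x$ to have a neighbour. In the general case, if $d_x=0$, then $\Delta f(x)=0$ and the bound is trivial, so division by $d_x(d_x+3)$ is harmless.

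The exact case \eqref{eq:exact-extremal-CD} is the specialisation $\eta=0$. Condition \eqref{eq:exact-extremum} gives \eqref{eq:approx-extremum} with $\eta=0$, and $-\Delta f(x)>0$ makes the positive part equal to $-\Delta f(x)$, whose square is $(\Delta f(x))^2$. The final inequality then follows from $d_x(d_x+3)\le d_*(d_*+3)=N$, by the definition \eqref{eq:N-definition} of $N$.

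There is no real obstacle in this argument. The only points needing care are the sign bookkeeping, which must ensure the lower bound is nonnegative before squaring, and the degenerate cases (positive part zero, or $d_x=0$), both of which are handled by $\mathrm{CD}(0,\infty)$ itself.
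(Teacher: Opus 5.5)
Your proof is correct and follows the paper's argument: bound $\Delta^2f(x)\ge-d_x\eta$, deduce $\Delta^2f(x)-2\Delta f(x)\ge2\bigl(-\Delta f(x)-\tfrac12d_x\eta\bigr)$, square when this is positive and insert into \eqref{eq:local-coercivity}, then specialise to $\eta=0$ and use $d_x(d_x+3)\le N$. The only difference is cosmetic: when $d_x=0$ your right-hand side is $0/0$, so the division is not harmless, but the paper excludes this case by noting that connectedness and $d_*\ge2$ force $d_x\ge1$.
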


\begin{proof}
Under the standing assumption $d_*\ge2$, connectedness gives $d_x\ge1$. From \eqref{eq:approx-extremum},
\[
  \Delta^2f(x)
  =\sum_{y\sim x}\bigl(\Delta f(y)-\Delta f(x)\bigr)
  \ge-d_x\eta.
\]
Therefore
\[
  \Delta^2f(x)-2\Delta f(x)
  \ge-d_x\eta-2\Delta f(x)
  =2\left(-\Delta f(x)-\frac12d_x\eta\right).
\]
If the right-hand side is positive, square this inequality and use \eqref{eq:local-coercivity}; if it is nonpositive, \eqref{eq:approx-extremal-CD} is trivial. This proves the approximate estimate. Taking $\eta=0$ gives the first inequality in \eqref{eq:exact-extremal-CD}, and the second follows from $d_x(d_x+3)\le N$.
\end{proof}

\begin{remark}[Why this does not imply global finite dimension]
Suppose $\Gamma_2(f)(x)=0$. Positivity of $\Gamma_2(\cdot,\cdot)(x)$ then forces $\Gamma_2(f,e_x)(x)=0$, and Lemma~\ref{lem:point-mass} gives
\[
  \Delta^2f(x)=2\Delta f(x).
\]
If $\Delta f(x)<0$, this identity is incompatible with $x$ being a local minimum of $\Delta f$, because local minimality would imply $\Delta^2f(x)\ge0$. Thus every zero-curvature direction with negative Laplacian lies outside the extremal regime used in Proposition~\ref{prop:extremal-CD}. This is precisely how the argument remains compatible with the irregular counterexample to global finite-dimensional self-improvement in \cite{BPS}.
\end{remark}

\subsection{Resolvent smoothing}\label{subsec:resolvent}

The coercive expression above factors as $A(A+2I)$ with $A=-\Delta$. We invert the positive
factor $A+2I$ and average the local $\Gamma_2$ control through positive heat kernels. This
removes the derivative that is not pointwise controllable and produces the semigroup estimate
used later.

The second use of Proposition~\ref{prop:second-order-coercivity} provides the semigroup estimate
that normally comes from a finite dimension parameter. Set
\begin{equation}
  A:=-\Delta,
  \qquad
  \mathcal T:=A(A+2I)=\Delta^2-2\Delta.
  \label{eq:resolvent-definitions}
\end{equation}
Define the bounded operator
\begin{equation}
  \mathcal R:=\int_0^\infty \e^{-2s}P_s\,\dd s,
  \label{eq:resolvent-representation}
\end{equation}
where the Bochner integral converges in operator norm in
$\mathcal B(\ell^\infty(V))$.

\begin{lemma}[Positive resolvent]
\label{lem:positive-resolvent}
The operator $\mathcal R$ satisfies
\[
  \mathcal R=(A+2I)^{-1}.
\]
It is a positive kernel operator and
\begin{equation}
  \mathcal R\one=\frac12\one.
  \label{eq:resolvent-mass}
\end{equation}
\end{lemma}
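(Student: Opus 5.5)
The plan is to check the resolvent identity directly from the semigroup calculus of Section~\ref{sec:preliminary}; positivity and the mass formula are then read off from the properties of $P_t$ already recorded there.

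\textbf{Convergence.} Since $P_s$ is a contraction on $\ell^\infty(V)$, we have $\norm{\e^{-2s}P_s}\le\e^{-2s}$. The map $s\mapsto P_s$ is norm-continuous because it is real analytic in operator norm. Hence the Bochner integral \eqref{eq:resolvent-representation} converges absolutely in $\mathcal B(\ell^\infty(V))$, with $\norm{\mathcal R}\le\frac12$.

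\textbf{Inverse identity.} Multiply by $A+2I=2I-\Delta$ and use $\frac{\dd}{\dd s}\bigl(\e^{-2s}P_s\bigr)=-\e^{-2s}(2I-\Delta)P_s$ in operator norm. Bounded operators pass through Bochner integrals, so
\[
  (A+2I)\mathcal R
  =\int_0^\infty \e^{-2s}(2I-\Delta)P_s\,\dd s
  =-\Bigl[\e^{-2s}P_s\Bigr]_{s=0}^{s=\infty}
  =I.
\]
The boundary term at infinity vanishes because $\norm{\e^{-2s}P_s}\le\e^{-2s}\to0$. Since $\Delta$ commutes with $P_s$, the same computation gives $\mathcal R(A+2I)=I$. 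Thus $\mathcal R=(A+2I)^{-1}$ as a two-sided inverse on $\ell^\infty(V)$.

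\textbf{Positivity and mass.} Each $P_s$ has a nonnegative stochastic kernel $p_s(x,y)$. Define $r(x,y):=\int_0^\infty\e^{-2s}p_s(x,y)\,\dd s\ge0$. For bounded $f$, evaluating the Bochner integral pointwise is a continuous linear functional, and $\sum_y p_s(x,y)\abs{f(y)}\le\norm f_\infty$. So Fubini--Tonelli gives $\mathcal Rf(x)=\sum_y r(x,y)f(y)$, which shows that $\mathcal R$ is a positive kernel operator. Finally, $P_s\one=\one$ yields $\mathcal R\one=\int_0^\infty\e^{-2s}\,\dd s\,\one=\frac12\one$.

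The only point needing care is the interchange of the pointwise kernel sum with the Bochner integral on the countable set $V$. This is routine, because everything involved is nonnegative or absolutely dominated. No step presents a real obstacle.
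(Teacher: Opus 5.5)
Your proposal is correct and follows the paper's proof essentially step by step: the contraction bound gives convergence, differentiating $\e^{-2s}P_s$ and integrating gives $(A+2I)\mathcal R=I$, commutation gives the other side, and positivity together with $\mathcal R\one=\frac12\one$ come from the kernel $r(x,y)$ via Tonelli/Fubini and $P_s\one=\one$. Your remarks on norm-continuity of $s\mapsto P_s$ and on evaluating the Bochner integral pointwise only make explicit details that the paper leaves implicit.
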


\begin{proof}
The convergence assertion follows from $\norm{P_s}_{\infty\to\infty}\le1$. Moreover,
\[
  \frac{\dd}{\dd s}\bigl(\e^{-2s}P_s\bigr)
  =-(A+2I)\e^{-2s}P_s.
\]
Integration from $0$ to $M$ and passage to the limit $M\to\infty$ show that $(A+2I)\mathcal R=I$. All operators involved are functions of the bounded operator $A$, so also $\mathcal R(A+2I)=I$.

Positivity follows from positivity of $P_s$. More explicitly, if $p_s(x,y)$ is the heat kernel, then
\begin{equation}
  r(x,y):=\int_0^\infty \e^{-2s}p_s(x,y)\,\dd s
  \label{eq:resolvent-kernel}
\end{equation}
is nonnegative and satisfies
\[
  \mathcal R h(x)=\sum_{y\in V}r(x,y)h(y)
\]
for bounded $h$; Tonelli's theorem, applied first to $\abs{h}$, justifies the interchange. Finally, $P_s\one=\one$ gives \eqref{eq:resolvent-mass}, equivalently
\[
  \sum_y r(x,y)=\frac12.
\]
\end{proof}

\begin{lemma}[Kernel Cauchy--Schwarz]
\label{lem:kernel-cs}
Let $\mathsf K$ be a positive kernel operator on $V$ with finite row masses. Then, for every
bounded real-valued $h$,
\begin{equation}
  (\mathsf K h)^2\le(\mathsf K\one)\mathsf K(h^2)
  \label{eq:kernel-CS}
\end{equation}
pointwise on $V$.
\end{lemma}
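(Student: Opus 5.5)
The plan is to apply the ordinary Cauchy--Schwarz inequality row by row, with the kernel itself serving as the weight. Write the kernel as $k(x,y)\ge0$, so that $\mathsf Kh(x)=\sum_{y\in V}k(x,y)h(y)$. The hypothesis of finite row masses says $\mathsf K\one(x)=\sum_y k(x,y)<\infty$ for each $x$. Boundedness of $h$ then gives absolute convergence of $\sum_y k(x,y)h(y)$ and of $\sum_y k(x,y)h(y)^2$, since both are dominated by $\norm{h}_\infty$ or $\norm{h}_\infty^2$ times the row mass. Hence every quantity in \eqref{eq:kernel-CS} is a well-defined finite number at each $x$.

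Now fix $x$ and factor each summand as $k(x,y)h(y)=\sqrt{k(x,y)}\cdot\sqrt{k(x,y)}\,h(y)$. Applying Cauchy--Schwarz in $\ell^2(V)$ to the sequences $y\mapsto\sqrt{k(x,y)}$ and $y\mapsto\sqrt{k(x,y)}\,h(y)$ yields
\[
  \Bigl(\sum_y k(x,y)h(y)\Bigr)^2\le\Bigl(\sum_y k(x,y)\Bigr)\Bigl(\sum_y k(x,y)h(y)^2\Bigr).
\]
This is exactly \eqref{eq:kernel-CS} at the vertex $x$. For countable sums one can first apply the inequality to finite truncations and then pass to the limit, which is legitimate because of the absolute convergence noted above.

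No step here is a genuine obstacle. The only point requiring care is convergence: summability is needed both to define the sums and to justify the limit over finite truncations, and both follow from finiteness of the row masses together with boundedness of $h$. An alternative route is to note that when $\mathsf K\one(x)>0$ the normalised row is a probability measure, so the inequality is Jensen's inequality for the convex function $t\mapsto t^2$. The degenerate case $\mathsf K\one(x)=0$ forces $k(x,\cdot)\equiv0$, and then both sides vanish.
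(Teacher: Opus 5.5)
Your proposal is correct and is essentially the paper's proof: the paper also fixes $x$ and applies Cauchy--Schwarz in the finite measure $\sum_y k(x,y)\delta_y$, which is exactly your row-wise weighted argument. Your added remarks on absolute convergence and on the degenerate case $\mathsf K\one(x)=0$ are valid bookkeeping that the paper leaves implicit.
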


\begin{proof}
Write $\mathsf K h(x)=\sum_yk(x,y)h(y)$ with $k(x,y)\ge0$. At fixed $x$,
Cauchy--Schwarz in the finite measure $\sum_yk(x,y)\delta_y$ gives
\[
  \abs{\mathsf K h(x)}^2
  \le\left(\sum_yk(x,y)\right)
      \left(\sum_yk(x,y)h(y)^2\right),
\]
which is \eqref{eq:kernel-CS}.
\end{proof}

\begin{theorem}[Laplacian smoothing]
\label{thm:resolvent-smoothing}
For every bounded $f:V\to\R$, every $t>0$, and every $x\in V$,
\begin{equation}
  \abs{\Delta P_tf(x)}^2
  \le\frac{N}{t}
  \mathcal R\bigl(P_t\Gamma(f)-\Gamma(P_tf)\bigr)(x).
  \label{eq:smoothing-resolvent}
\end{equation}
In particular,
\begin{equation}
  \norm{\Delta P_tf}_\infty^2
  \le\frac{N}{2t}\norm{\Gamma(f)}_\infty.
  \label{eq:smoothing-linf}
\end{equation}
Consequently,
\begin{equation}
  \norm{P_tf-f}_\infty
  \le\sqrt{2Nt\,\norm{\Gamma(f)}_\infty}.
  \label{eq:semigroup-shift}
\end{equation}
\end{theorem}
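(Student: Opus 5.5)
The plan is to write $\Delta P_tf$ as a positive average of the quantity $\mathcal T$ applied along the heat flow. Proposition~\ref{prop:second-order-coercivity} converts $\mathcal T$ into $\Gamma_2$, and the interpolation identity \eqref{eq:interpolation} turns the time integral of $\Gamma_2$ into the right-hand side.

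\emph{Step 1: representation.} Set $u_s:=P_{t-s}f$ for $0\le s\le t$. For every $s\in[0,t]$, since $P_t=P_sP_{t-s}$ and all operators are functions of the bounded operator $A$, Lemma~\ref{lem:positive-resolvent} gives
\[
  AP_tf=P_sAu_s=P_s\mathcal R(A+2I)Au_s=\mathcal R P_s\,\mathcal T u_s .
\]
Averaging over $s\in[0,t]$ yields
\[
  -\Delta P_tf=AP_tf=\frac1t\int_0^t\mathcal R P_s\,\mathcal T u_s\,\dd s .
\]
The integrand is norm-continuous in $\ell^\infty(V)$, so this is a Bochner integral.

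\emph{Step 2: Cauchy--Schwarz.} Fix $x$. The right-hand side is an integral of $(s,y)\mapsto \mathcal T u_s(y)$ against the nonnegative measure $\mu_x(\dd s,y):=t^{-1}\,(r\ast p_s)(x,y)\,\dd s$ on $[0,t]\times V$. Here $r\ast p_s$ denotes the kernel of $\mathcal RP_s$, which is nonnegative. By $P_s\one=\one$ and \eqref{eq:resolvent-mass}, $\mu_x$ has total mass $\tfrac12$. Applying Cauchy--Schwarz in $\mu_x$, exactly as in Lemma~\ref{lem:kernel-cs} but on the product space, gives
\[
  \abs{\Delta P_tf(x)}^2\le\frac12\cdot\frac1t\int_0^t\mathcal RP_s\bigl((\mathcal T u_s)^2\bigr)(x)\,\dd s .
\]
Fubini/Tonelli is harmless here because everything is bounded.

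\emph{Step 3: coercivity and interpolation.} By \eqref{eq:global-coercivity}, $(\mathcal T u_s)^2\le4N\Gamma_2(u_s)$ pointwise. Positivity of $\mathcal RP_s$ then gives
\[
  \abs{\Delta P_tf(x)}^2\le\frac{2N}{t}\,\mathcal R\Bigl(\int_0^tP_s\Gamma_2(P_{t-s}f)\,\dd s\Bigr)(x)=\frac{N}{t}\,\mathcal R\bigl(P_t\Gamma(f)-\Gamma(P_tf)\bigr)(x)
\]
by \eqref{eq:interpolation}. This is \eqref{eq:smoothing-resolvent}.

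\emph{Step 4: consequences.} Since $\Gamma(P_tf)\ge0$, $\mathcal R$ is positive, and $\mathcal R\one=\tfrac12\one$, the bracket is at most $\mathcal RP_t\Gamma(f)\le\tfrac12\norm{\Gamma(f)}_\infty$. This gives \eqref{eq:smoothing-linf}. For \eqref{eq:semigroup-shift}, write $P_tf-f=\int_0^t\Delta P_sf\,\dd s$ in $\ell^\infty$ and bound the integrand by \eqref{eq:smoothing-linf}:
\[
  \norm{P_tf-f}_\infty\le\int_0^t\sqrt{\frac{N\norm{\Gamma(f)}_\infty}{2s}}\,\dd s=\sqrt{2Nt\norm{\Gamma(f)}_\infty}.
\]
The singularity at $s=0$ is integrable.

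The main obstacle is Step 1. One must see that commuting $\mathcal R$ past $P_s$ and averaging in time lets a single Cauchy--Schwarz simultaneously absorb the resolvent and heat kernels. This is what makes the total mass come out as $\tfrac12$ and matches the factor $2$ in the interpolation identity. The rest is routine bookkeeping with bounded operators.
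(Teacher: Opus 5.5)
Your proof is correct and follows the paper's argument. It uses the same factorisation $AP_tf=\mathcal RP_s\mathcal T P_{t-s}f$, the coercivity bound $(\mathcal T h)^2\le 4N\Gamma_2(h)$, Cauchy--Schwarz against a positive kernel of total mass $\tfrac12$, the interpolation identity, and the same derivation of the two consequences. The only difference is the order of two steps: the paper applies the kernel Cauchy--Schwarz inequality to $\mathcal RP_s$ at each fixed $s$ (the left side does not depend on $s$) and then integrates in $s$. This gives exactly your product-space bound without the time-averaged representation, so the single simultaneous Cauchy--Schwarz you flag as the main obstacle is not actually needed.
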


\begin{proof}
Fix $s\in[0,t]$ and set $h_s:=P_{t-s}f$. Since $P_s$ and $\mathcal R$ are bounded
functions of $A$, all three relevant operators commute, and
\begin{equation}
  AP_tf=P_sAh_s=\mathcal R P_s\mathcal T h_s.
  \label{eq:factorisation-use}
\end{equation}
Here $P_sAh_s=AP_tf$ follows from the semigroup property, while
$\mathcal R\mathcal T=\mathcal R A(A+2I)=A$ because the factors commute.

For clarity, the positive kernel of $\mathcal R P_s$ is
\[
  k_s(x,y)
  =\int_0^\infty \e^{-2u}p_{u+s}(x,y)\,\dd u,
\]
and its row mass is exactly $1/2$. Apply Lemma~\ref{lem:kernel-cs} to
$\mathsf K=\mathcal R P_s$. By Proposition~\ref{prop:second-order-coercivity},
$(\mathcal T h_s)^2\le4N\Gamma_2(h_s)$. Hence, pointwise,
\begin{equation}
  (AP_tf)^2
  \le\frac12\mathcal R P_s\bigl((\mathcal T h_s)^2\bigr)
  \le2N\,\mathcal R P_s\Gamma_2(P_{t-s}f).
  \label{eq:resolvent-intermediate}
\end{equation}
The left-hand side does not depend on $s$. The map
$s\mapsto P_s\Gamma_2(P_{t-s}f)$ is norm-continuous in $\ell^\infty(V)$, and
$\mathcal R$ is bounded, so integrating \eqref{eq:resolvent-intermediate} as a Bochner
integral and using \eqref{eq:interpolation} gives
\[
  t(AP_tf)^2
  \le2N\mathcal R\int_0^tP_s\Gamma_2(P_{t-s}f)\,\dd s
  =N\mathcal R\bigl(P_t\Gamma(f)-\Gamma(P_tf)\bigr).
\]
This is \eqref{eq:smoothing-resolvent}, because $A=-\Delta$.

Under $\mathrm{CD}(0,\infty)$ the bracket in \eqref{eq:smoothing-resolvent} is
nonnegative by \eqref{eq:interpolation}. Since $\Gamma(P_tf)\ge0$ and $\mathcal R$ is
positive,
\[
  0\le\mathcal R\bigl(P_t\Gamma(f)-\Gamma(P_tf)\bigr)
  \le\mathcal R P_t\Gamma(f)
  \le\frac12\norm{\Gamma(f)}_\infty,
\]
where the last inequality uses positivity, $P_t\one=\one$, and
\eqref{eq:resolvent-mass}. This proves \eqref{eq:smoothing-linf}. Finally,
\[
  P_tf-f=\int_0^t\Delta P_sf\,\dd s
\]
in $\ell^\infty(V)$, and therefore
\[
  \norm{P_tf-f}_\infty
  \le\int_0^t
     \sqrt{\frac{N}{2s}\norm{\Gamma(f)}_\infty}\,\dd s
  =\sqrt{2Nt\,\norm{\Gamma(f)}_\infty}.
\]
\end{proof}

\begin{remark}
Neither \eqref{eq:exact-extremal-CD} nor \eqref{eq:smoothing-linf} implies a global $\mathrm{CD}(0,N)$ inequality. They are instead two projections of the same positive-semidefinite structure onto the point mass $e_x$. The first is adapted to a maximum principle; the second is adapted to semigroup interpolation through the factorisation $\mathcal T=A(A+2I)$.
\end{remark}

\section{Modified heat, Li--Yau, and Harnack estimates}\label{sec:modified-heat}

This section develops the analytic route from the point-mass estimates to Harnack control.
The modified heat equation replaces the unavailable diffusion chain rule; its gradient bound
and exponential comparisons are then combined with the extremal curvature estimate from
Subsection~\ref{subsec:point-mass}. The output is a Li--Yau inequality and a modified-flow Harnack
estimate, which are the final analytic inputs for volume doubling.

\subsection{The modified heat equation}\label{subsec:modified-flow}

M\"unch introduced, for finite graphs, the use of
\[
  \partial_tu=\Delta u+\Gamma(u)
\]
as a discrete substitute for $\log(P_t\e^{u_0})$, together with the gradient estimate and
pointwise comparisons with the linear heat semigroup
\cite[Theorems~2.1 and~2.3]{MunchCDN}. Pajot and Russ developed the corresponding
$L^\infty$ theory on possibly infinite weighted graphs of bounded geometry. Using linear
semigroup estimates and a convergent mild iteration, they proved global well-posedness, treated
the failure of spatial extrema to be attained, and extended M\"unch's comparison argument
\cite[Theorem~1.3 and Sections~5--6]{RussPajot}. In the present bounded-degree unnormalised
setting, the vector field $u\mapsto\Delta u+\Gamma(u)$ is locally Lipschitz on
$\ell^\infty(V)$, so we replace their mild-iteration construction by a direct Banach-space ODE
argument.

For bounded initial data $u_0$, consider
\begin{equation}
  \partial_tu_t=\Delta u_t+\Gamma(u_t),
  \qquad
  u_{t=0}=u_0.
  \label{eq:modified-heat}
\end{equation}
The next proposition records global existence and the gradient maximum principle in the present bounded-degree setting.

\begin{proposition}[Global nonlinear flow and gradient bound]
\label{prop:global-flow}
Let $u_0\in\ell^\infty(V)$ and assume
\begin{equation}
  \norm{\Gamma(u_0)}_\infty<\frac12.
  \label{eq:initial-gradient-smallness}
\end{equation}
Then \eqref{eq:modified-heat} has a unique global solution
\[
  u\in C^1\bigl([0,\infty),\ell^\infty(V)\bigr).
\]
Moreover,
\begin{equation}
  \norm{\Gamma(u_t)}_\infty
  \le\norm{\Gamma(u_0)}_\infty
  \qquad(t\ge0),
  \label{eq:gradient-max}
\end{equation}
and hence
\begin{equation}
  \abs{u_t(y)-u_t(x)}<1
  \qquad(x\sim y,\ t\ge0).
  \label{eq:edge-smallness}
\end{equation}
\end{proposition}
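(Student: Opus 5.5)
Local existence and uniqueness come from the Picard--Lindel\"of theorem in the Banach space $\ell^\infty(V)$, because the vector field $F(u):=\Delta u+\Gamma(u)$ is locally Lipschitz. Indeed, by \eqref{eq:Delta-bound} and \eqref{eq:Gamma-bilinear-bound},
\[
\norm{F(u)-F(v)}_\infty\le\bigl(2d_*+2d_*(\norm{u}_\infty+\norm{v}_\infty)\bigr)\norm{u-v}_\infty .
\]
This gives a unique maximal solution $u\in C^1([0,T^*),\ell^\infty(V))$, and the blow-up alternative holds: if $T^*<\infty$, then $\norm{u_t}_\infty\to\infty$ as $t\uparrow T^*$.

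Global existence will follow from two a priori bounds on $[0,T^*)$. The first is an $\ell^\infty$ bound on $u_t$. I would compare with $\sup u_0$ and $\inf u_0$ via the maximum principle from the appendix. Alternatively, I would use the edge bound directly: once \eqref{eq:gradient-max} is known, $\abs{\Gamma(u_t)}\le\frac12$, so $\norm{\partial_tu}_\infty\le 2d_*\norm{u_t}_\infty+\frac12$. Gronwall then gives exponential growth at worst, hence no finite-time blow-up. So everything reduces to the gradient bound \eqref{eq:gradient-max}.

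The heart of the proof is that bound, following M\"unch's computation. Let $g_t:=\Gamma(u_t)$ and differentiate using the Fr\'echet derivatives recorded in Section~\ref{sec:preliminary}:
\[
\partial_tg=2\Gamma(u,\Delta u)+2\Gamma(u,\Gamma(u)) = \Delta\Gamma(u)-2\Gamma_2(u)+2\Gamma(u,\Gamma(u))
\]
by \eqref{eq:bochner}. By \eqref{eq:standing-CD} this gives
\[
\partial_tg\le\Delta g+2\Gamma(u,g).
\]
Writing $\Gamma(u,g)(x)=\frac12\sum_{y\sim x}(u(y)-u(x))(g(y)-g(x))$, the right-hand side is $\sum_{y\sim x}\bigl(1+u(y)-u(x)\bigr)(g(y)-g(x))$. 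This is a linear operator $L_t g$ with edge weights $1+u_t(y)-u_t(x)$. These weights are nonnegative whenever $\abs{u_t(y)-u_t(x)}\le1$, which holds as long as $\norm{\Gamma(u_t)}_\infty\le\frac12$. Indeed, $\Gamma(u)(x)\ge\frac12(u(y)-u(x))^2$ for each $y\sim x$, so $\Gamma(u)<\frac12$ gives $\abs{u(y)-u(x)}<1$, which is exactly \eqref{eq:edge-smallness}.

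The plan is a continuity/bootstrap argument. Let $T_1:=\sup\{T<T^*:\norm{\Gamma(u_t)}_\infty<\frac12\text{ on }[0,T]\}$. On $[0,T_1)$, $g$ is a subsolution of a linear equation $\partial_tg\le L_tg$ whose generator is bounded, with nonnegative off-diagonal coefficients and zero row sums. The maximum principle for such bounded time-dependent Markov-type generators on $\ell^\infty(V)$ then gives $\sup g_t\le\sup g_0$.

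This maximum principle is the main obstacle, because on an infinite graph the supremum need not be attained. I would handle it with the appendix's maximum principle, or directly by a perturbation. Consider $w_t:=g_t-\sup g_0-\varepsilon(1+t)$ and suppose $\sup_x w_t>0$ at some time $t$. Take the first time $t_0$ at which $\sup w$ reaches a small level $\delta>0$. Choose an approximate maximiser $x$ with $w_{t_0}(x)>\delta-\delta'$. The neighbour differences $g(y)-g(x)$ are then at most $\delta'$, so $L_tg(x)\le 2d_*\cdot2\delta'$ by the weight bound. At the same time, the time derivative at near-first-crossing points must be at least about $-\varepsilon$ up to errors controlled by the uniform $C^1$ modulus of $t\mapsto g_t$ in $\ell^\infty$. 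Choosing $\delta'\ll\varepsilon$ yields a contradiction. An equivalent route is Duhamel: $g_t\le \mathcal P_{t}g_0$ for the positive propagator of $L_t$, obtained by a Dyson series since $L_t$ is bounded, and $\mathcal P$ is sub-Markov. This gives $\norm{g_t}_\infty\le\norm{g_0}_\infty<\frac12$ on $[0,T_1)$. By continuity, $T_1=T^*$. Combined with the a priori bound above, $T^*=\infty$, and \eqref{eq:gradient-max} and \eqref{eq:edge-smallness} follow.
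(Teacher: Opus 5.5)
Your proposal is correct and follows the paper's proof essentially step for step. The shared steps are: Picard--Lindel\"of in $\ell^\infty(V)$; rewriting $\partial_t\Gamma(u_t)$ via \eqref{eq:bochner} as a subsolution with edge weights $1+u_t(y)-u_t(x)$ once $\Gamma_2\ge0$ is dropped; the bootstrap through the bounded maximum principle of Lemma~\ref{lem:max-principle}; and a Gronwall bound on $\norm{u_t}_\infty$ to exclude finite-time blow-up. The differences are minor. You invoke the blow-up alternative where the paper extends the uniformly Lipschitz solution past $T_{\max}$ by hand. Your other route to the $\ell^\infty$ bound also works, since $\Delta u+\Gamma(u)=\sum_{y\sim x}\bigl(1+\tfrac12(u(y)-u(x))\bigr)(u(y)-u(x))$ has nonnegative weights, and it gives the sharper bound $\norm{u_t}_\infty\le\norm{u_0}_\infty$. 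Finally, in your optional first-crossing sketch the contradiction you want is $\partial_tg\ge\varepsilon-o(1)$ at the near-maximiser against $\partial_tg\le2d_*\delta'$, not a derivative ``at least about $-\varepsilon$''.
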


\begin{proof}
Set
\[
  \mathcal F(u):=\Delta u+\Gamma(u).
\]
This is a $C^\infty$ quadratic polynomial on $\ell^\infty(V)$, with
\[
  D\mathcal F(u)h=\Delta h+2\Gamma(u,h).
\]
In particular, $\mathcal F$ is locally Lipschitz. Indeed, if $\norm{u}_\infty,\norm{v}_\infty\le M$, then for every $x$,
\begin{align*}
  \abs{\Gamma(u)(x)-\Gamma(v)(x)}
  &\le\frac12\sum_{y\sim x}
   \abs{(u(y)-u(x))^2-(v(y)-v(x))^2}\\
  &\le4d_*M\norm{u-v}_\infty.
\end{align*}
Together with \eqref{eq:Delta-bound}, this gives
\[
  \norm{\mathcal F(u)-\mathcal F(v)}_\infty
  \le\bigl(2d_*+4d_*M\bigr)\norm{u-v}_\infty
\]
on the closed radius-$M$ ball. The Banach-space Picard--Lindel\"of theorem
\cite{DeimlingODE} therefore gives a unique maximal solution
\[
  u\in C^1\bigl([0,T_{\max}),\ell^\infty(V)\bigr).
\]
We postpone the continuation argument until after deriving the a priori bounds below.

Let
\[
  H_t:=\Gamma(u_t).
\]
The bilinearity and continuity of $\Gamma$ show that $H\in C^1([0,T_{\max}),\ell^\infty(V))$. Differentiating and using \eqref{eq:bochner} gives
\begin{align}
  \partial_tH_t
  &=2\Gamma(u_t,\Delta u_t)+2\Gamma(u_t,H_t)\notag\\
  &=\Delta H_t+2\Gamma(u_t,H_t)-2\Gamma_2(u_t).
  \label{eq:H-evolution}
\end{align}
Suppose for the moment that $\norm{H_t}_\infty<1/2$. For every edge $x\sim y$, the single-edge contribution to \eqref{eq:edge-gamma} gives
\[
  \frac12\abs{u_t(y)-u_t(x)}^2
  \le H_t(x)<\frac12,
\]
so $\abs{u_t(y)-u_t(x)}<1$. Since $\Gamma_2(u_t)\ge0$, equation \eqref{eq:H-evolution} becomes
\begin{equation}
  \partial_tH_t(x)
  \le\sum_{y\sim x}
    \bigl(1+u_t(y)-u_t(x)\bigr)
    \bigl(H_t(y)-H_t(x)\bigr).
  \label{eq:H-max-ineq}
\end{equation}
For $y\sim x$, put
\[
  a_t(x,y):=1+u_t(y)-u_t(x),
\]
and set $a_t(x,y):=0$ otherwise. These coefficients are nonnegative, and
\[
  \sum_y a_t(x,y)\le2d_x\le2d_*.
\]
All sums are absolutely convergent because each row has finite support. Lemma~\ref{lem:max-principle} implies that $t\mapsto\sup_xH_t(x)=\norm{H_t}_\infty$ is nonincreasing throughout every time interval on which $\norm{H_t}_\infty<1/2$.

We now close the bootstrap. Let $M_\Gamma:=\norm{H_0}_\infty<1/2$ and suppose that a first time $\tau<T_{\max}$ exists with $\norm{H_\tau}_\infty=1/2$. On $[0,\tau)$ the preceding maximum principle gives $\norm{H_t}_\infty\le M_\Gamma$. Norm-continuity of $H_t$ then implies $\norm{H_\tau}_\infty\le M_\Gamma<1/2$, a contradiction. Therefore \eqref{eq:gradient-max} holds on $[0,T_{\max})$, and \eqref{eq:edge-smallness} follows from the single-edge bound above.

Finally,
\[
  \norm{\partial_tu_t}_\infty
  \le2d_*\norm{u_t}_\infty+M_\Gamma.
\]
Hence
\[
  \norm{u_t}_\infty
  \le\norm{u_0}_\infty
     +\int_0^t\bigl(2d_*\norm{u_s}_\infty+M_\Gamma\bigr)\,\dd s.
\]
Gronwall's inequality gives
\begin{equation}
  \norm{u_t}_\infty
  \le\e^{2d_*t}\norm{u_0}_\infty
  +\frac{M_\Gamma}{2d_*}\bigl(\e^{2d_*t}-1\bigr).
  \label{eq:global-flow-bound}
\end{equation}
Suppose, for contradiction, that $T_{\max}<\infty$. By
\eqref{eq:global-flow-bound},
\[
  M:=\sup_{0\leq t<T_{\max}}\norm{u_t}_\infty<\infty.
\]
Together with \eqref{eq:gradient-max}, this gives
\[
  \norm{\partial_tu_t}_\infty
  =\norm{\Delta u_t+\Gamma(u_t)}_\infty
  \leq2d_*M+M_\Gamma
  \qquad(0\leq t<T_{\max}).
\]
Thus $u$ is uniformly Lipschitz on $[0,T_{\max})$ and consequently has a limit
$u_*\in\ell^\infty(V)$ as $t\uparrow T_{\max}$. Applying the Banach-space
Picard--Lindel\"of theorem at time $T_{\max}$ with initial value $u_*$ produces a local
solution beyond $T_{\max}$. Continuity of $\mathcal F$ and uniqueness glue it to the existing
solution, contradicting maximality. Therefore $T_{\max}=\infty$.
\end{proof}

Two exponential comparisons replace the formal chain rule that, in a diffusion setting, would identify $u_t$ with $\log(P_t\e^{u_0})$.

\begin{lemma}[Scalar inequalities]
\label{lem:scalar-inequalities}
For every $z\in[-1,1]$,
\begin{align}
  \frac12z\left(1+\frac z2\right)
  -\bigl(\e^{z/2}-1\bigr)&\ge0,
  \label{eq:scalar-small}\\
  2z\left(1+\frac z2\right)
  -\bigl(\e^{2z}-1\bigr)&\le0.
  \label{eq:scalar-large}
\end{align}
\end{lemma}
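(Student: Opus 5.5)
We prove the two inequalities separately, each by elementary one-variable calculus on $[-1,1]$. For each, we define the left-hand side as a function of $z$, note that it vanishes at $z=0$, and read off its sign from its derivatives.

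\textbf{Inequality \eqref{eq:scalar-small}.} Set
\[
  \phi(z):=\frac z2+\frac{z^2}4-\e^{z/2}+1 .
\]
Then $\phi(0)=0$ and
\[
  \phi'(z)=\frac12+\frac z2-\frac12\e^{z/2},
  \qquad
  \phi''(z)=\frac12-\frac14\e^{z/2}.
\]
On $[-1,1]$ we have $\e^{z/2}\le\e^{1/2}<2$, so $\phi''>0$ and $\phi$ is convex. Since $\phi'(0)=0$, the point $z=0$ is the global minimum of $\phi$ on $[-1,1]$. Hence $\phi\ge\phi(0)=0$, which is \eqref{eq:scalar-small}.

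Alternatively, one can compare Taylor series. We have
\[
  \e^{z/2}-1=\frac z2+\frac{z^2}8+\frac{z^3}{48}+\dots,
\]
so the claim reduces to
\[
  \frac{z^2}8\ge\sum_{k\ge3}\frac{(z/2)^k}{k!}.
\]
This holds for $|z|\le1$. The convexity argument is cleaner, so that is the one we use.

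\textbf{Inequality \eqref{eq:scalar-large}.} Set
\[
  \psi(z):=2z+z^2-\e^{2z}+1 .
\]
Then $\psi(0)=0$ and
\[
  \psi'(z)=2+2z-2\e^{2z}=2\bigl(1+z-\e^{2z}\bigr).
\]
Let $g(z):=1+z-\e^{2z}$. Then $g(0)=0$ and $g'(z)=1-2\e^{2z}$, which vanishes at $z_0=-\tfrac12\log2\approx-0.347$.

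\emph{On $[0,1]$:} here $g'(z)\le 1-2<0$, so $g$ decreases from $g(0)=0$. Thus $g\le0$, hence $\psi'\le0$, and $\psi(z)\le\psi(0)=0$.

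\emph{On $[-1,0]$:} we need $\psi(z)\le0$, i.e. $\e^{2z}\ge1+2z+z^2=(1+z)^2$. Since $1+z\ge0$ on this interval, this is equivalent to $\e^{z}\ge1+z$, which is the standard inequality valid for all real $z$.

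Equivalently, the $[-1,0]$ case follows from the identity
\[
  \psi(z)=(1+z)^2-\e^{2z},
\]
which is $\le0$ whenever $1+z\ge0$. Applying this identity on all of $[-1,1]$ handles both subintervals at once.

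\textbf{Main obstacle.} The only delicate point is the sign of $\psi$ for negative $z$, where the naive derivative argument is not monotone (because $g'$ changes sign at $z_0$). The factorisation $\psi(z)=(1+z)^2-\e^{2z}$ together with $1+z\ge0$ removes this difficulty. For \eqref{eq:scalar-small}, the key fact is the convexity bound $\e^{1/2}<2$, which holds with room to spare.
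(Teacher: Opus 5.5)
Your proof is correct. For \eqref{eq:scalar-small} it coincides with the paper's argument, which only substitutes $w=z/2$ first: convexity follows from $\e^{1/2}<2$, and there is a critical point at $0$.

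For \eqref{eq:scalar-large} you take a genuinely different and shorter route. The paper splits into two cases:
\begin{itemize}
\item for $z\ge0$ it uses the Taylor bound $\e^{2z}\ge1+2z+2z^2$;
\item for $z=-s\le0$ it studies $\varphi(s)=2s-s^2-1+\e^{-2s}$ through the strict concavity of $g(s)=1-s-\e^{-2s}$, a single sign change of $\varphi'=2g$ on $(0,1)$, and the endpoint values $\varphi(0)=0$, $\varphi(1)=\e^{-2}>0$.
\end{itemize}
Your identity $2z(1+z/2)-(\e^{2z}-1)=(1+z)^2-\e^{2z}$ reduces everything to $\e^{z}\ge1+z$ whenever $1+z\ge0$. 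The paper's $\varphi$ is the same expression, $\varphi(s)=\e^{-2s}-(1-s)^2$, but the paper does not exploit this. As a consequence, your separate derivative computation on $[0,1]$ is correct but redundant.

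Besides being shorter, your version gives a sharper statement: \eqref{eq:scalar-large} holds for all $z\ge-1$. So the upper comparison \eqref{eq:upper-comparison} needs only the one-sided bound $u_s(y)-u_s(x)\ge-1$, not the full edge bound \eqref{eq:edge-smallness}. The paper's endpoint evaluation at $s=1$ hides this.
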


\begin{proof}
For \eqref{eq:scalar-small}, put $w=z/2\in[-1/2,1/2]$ and define
\[
  q(w):=1+w+w^2-\e^w.
\]
Since $q(0)=q'(0)=0$ and $q''(w)=2-\e^w>0$ on this interval, $w=0$ is the unique minimum of $q$.

For \eqref{eq:scalar-large}, first let $z\ge0$. Taylor's inequality $\e^{2z}\ge1+2z+2z^2$ gives
\[
  2z\left(1+\frac z2\right)-(\e^{2z}-1)
  \le-z^2\le0.
\]
Now write $z=-s$ with $s\in[0,1]$. The desired inequality is equivalent to
\begin{equation}
  \varphi(s):=2s-s^2-1+\e^{-2s}\ge0.
  \label{eq:phi}
\end{equation}
Here
\[
  \varphi'(s)=2g(s),
  \qquad
  g(s):=1-s-\e^{-2s}.
\]
The function $g$ is strictly concave, satisfies $g(0)=0$, $g'(0)=1$, and $g(1)=-\e^{-2}<0$. Its nonnegative superlevel set is therefore an interval; since $g$ is positive for small positive $s$ and negative at $1$, there is a unique sign change in $(0,1)$. Thus $\varphi$ first increases and then decreases. Its minimum on $[0,1]$ is attained at an endpoint, and $\varphi(0)=0$, $\varphi(1)=\e^{-2}>0$. This proves \eqref{eq:phi}.
\end{proof}

\begin{proposition}[Semigroup comparisons]
\label{prop:semigroup-comparisons}
Let $u_t$ be the solution in Proposition~\ref{prop:global-flow}. With
\begin{equation}
  \theta:=\frac12,
  \qquad
  \beta:=2,
  \label{eq:theta-beta}
\end{equation}
one has, pointwise on $V$,
\begin{align}
  \e^{\theta u_t}&\ge P_t\e^{\theta u_0},
  \label{eq:lower-comparison}\\
  \e^{\beta u_t}&\le P_t\e^{\beta u_0}.
  \label{eq:upper-comparison}
\end{align}
\end{proposition}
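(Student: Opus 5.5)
Both comparisons come from a parabolic comparison principle applied to $w_t:=\e^{\gamma u_t}$ with $\gamma\in\{\theta,\beta\}$. We show that $w_t$ is a supersolution (for $\gamma=\theta$) or a subsolution (for $\gamma=\beta$) of the linear heat equation $\partial_t w=\Delta w$. We then compare with $P_t w_0$, which has the same initial datum.

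\textbf{Step 1: the edge computation.} Since $u\in C^1([0,\infty),\ell^\infty(V))$ and $E_\gamma$ is $C^\infty$ on $\ell^\infty$, the curve $w$ is $C^1$ and
\[
  \partial_t w_t=\gamma w_t\bigl(\Delta u_t+\Gamma(u_t)\bigr).
\]
Write $z_y:=u_t(y)-u_t(x)$ for $y\sim x$. Then
\[
  \gamma\bigl(\Delta u_t+\Gamma(u_t)\bigr)(x)=\sum_{y\sim x}\gamma z_y\Bigl(1+\frac{z_y}{2}\Bigr),
  \qquad
  \frac{\Delta w_t(x)}{w_t(x)}=\sum_{y\sim x}\bigl(\e^{\gamma z_y}-1\bigr).
\]
Hence
\[
  \partial_t w_t(x)-\Delta w_t(x)=w_t(x)\sum_{y\sim x}\Bigl[\gamma z_y\Bigl(1+\frac{z_y}2\Bigr)-\bigl(\e^{\gamma z_y}-1\bigr)\Bigr].
\]
By \eqref{eq:edge-smallness}, each $z_y$ lies in $(-1,1)$. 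Lemma~\ref{lem:scalar-inequalities} then applies term by term:
\begin{itemize}
  \item with $\gamma=1/2$, each bracket is nonnegative, so $\partial_t w\ge\Delta w$;
  \item with $\gamma=2$, each bracket is nonpositive, so $\partial_t w\le\Delta w$.
\end{itemize}
Here $w_t>0$ is used.

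\textbf{Step 2: comparison with the heat semigroup.} Set $v_t:=w_t-P_tw_0$ in the case $\gamma=\theta$; in the case $\gamma=\beta$, set $v_t:=P_tw_0-w_t$. In either case $v\in C^1([0,\infty),\ell^\infty(V))$, $v_0=0$, and $\partial_t v_t\ge\Delta v_t$ pointwise. We want $v_t\ge0$. The cleanest route is Duhamel. Put $g_s:=\partial_s v_s-\Delta v_s\ge0$. Differentiate $s\mapsto P_{t-s}v_s$ in $\ell^\infty$, using bounded-operator calculus from Section~\ref{sec:preliminary}:
\[
  \frac{\dd}{\dd s}P_{t-s}v_s=P_{t-s}\bigl(\partial_s v_s-\Delta v_s\bigr)=P_{t-s}g_s.
\]
Integrating over $[0,t]$ as a Bochner integral gives
\[
  v_t=\int_0^tP_{t-s}g_s\,\dd s\ge0,
\]
by positivity of $P_{t-s}$. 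This yields \eqref{eq:lower-comparison} and \eqref{eq:upper-comparison}. Alternatively, one can invoke Lemma~\ref{lem:max-principle} with coefficients $a(x,y)=\one_{x\sim y}$ applied to $-v$. The Duhamel argument avoids any issue with extrema not being attained on infinite graphs.

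\textbf{Main obstacle.} The only delicate point is that the scalar inequalities require $|z|\le1$ on every edge and at every time. This is exactly what the gradient bound \eqref{eq:edge-smallness} of Proposition~\ref{prop:global-flow} supplies, under the smallness hypothesis \eqref{eq:initial-gradient-smallness}. Everything else is routine: the $C^1$ regularity of $w$ and $v$ in $\ell^\infty$ holds because both $u$ and $\Delta$ are bounded, and the semigroup derivative identity is standard bounded-operator calculus.
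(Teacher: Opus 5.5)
Your proposal is correct and is essentially the paper's argument. The paper differentiates $J_\gamma(s)=P_{t-s}\e^{\gamma u_s}$ and uses the same edge identity and scalar inequalities to get monotonicity in $s$. This is exactly your Duhamel computation, because $P_{t-s}v_s$ differs from $\pm J_\gamma(s)$ only by the $s$-independent term $P_t w_0$.
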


\begin{proof}
For $\gamma>0$ and fixed $t$, set
\[
  J_\gamma(s):=P_{t-s}\e^{\gamma u_s},
  \qquad 0\le s\le t.
\]
The Nemytskii map $v\mapsto \e^{\gamma v}$ is continuously Fr\'echet differentiable on
$\ell^\infty(V)$, with derivative $h\mapsto\gamma\e^{\gamma v}h$; on bounded subsets its
derivative is bounded and locally Lipschitz. Since $u\in C^1([0,\infty),\ell^\infty)$ and
$t\mapsto P_t$ is differentiable in operator norm, $J_\gamma$ is $C^1$ as an
$\ell^\infty(V)$-valued map, and differentiating gives
\begin{equation}
  J_\gamma'(s)
  =P_{t-s}\left[
     \gamma\e^{\gamma u_s}\bigl(\Delta u_s+\Gamma(u_s)\bigr)
     -\Delta\e^{\gamma u_s}
   \right].
  \label{eq:Jgamma-derivative}
\end{equation}
At a vertex $x$, writing $z_y:=u_s(y)-u_s(x)$, the expression inside the
brackets becomes
\begin{align}
 &\gamma\e^{\gamma u_s(x)}
   \bigl(\Delta u_s(x)+\Gamma(u_s)(x)\bigr)
   -\Delta\e^{\gamma u_s}(x)\notag\\
 &\qquad
  =\e^{\gamma u_s(x)}
   \sum_{y\sim x}
   \left[
     \gamma z_y\left(1+\frac{z_y}{2}\right)
     -\bigl(\e^{\gamma z_y}-1\bigr)
   \right].
  \label{eq:Jgamma-pointwise}
\end{align}
By \eqref{eq:edge-smallness}, every $z_y$ lies in $[-1,1]$. Lemma~\ref{lem:scalar-inequalities} shows that $J_\theta$ is nondecreasing and $J_\beta$ is nonincreasing. Comparing their values at $s=0$ and $s=t$ gives \eqref{eq:lower-comparison} and \eqref{eq:upper-comparison}.
\end{proof}

\subsection{Li--Yau and Harnack estimates}\label{subsec:li-yau}

M\"unch's finite-graph maximum-principle calculation gives the modified Li--Yau estimate
under $\mathrm{CD}(0,n)$ \cite[Theorem~3.1]{MunchCDN}; Pajot and Russ adapted that argument
to infinite bounded-geometry graphs \cite[Section~7.1]{RussPajot}. The proof below follows this
scheme, with the global finite-dimensional curvature term replaced by the extremal point-mass
estimate from Proposition~\ref{prop:extremal-CD} and with a distance penalisation that makes
the $\ell^\infty$ argument valid without a volume-growth assumption.

The extremal estimate in Proposition~\ref{prop:extremal-CD}, rather than a global finite-dimensional condition, is enough to prove the modified Li--Yau inequality.

\begin{theorem}[Modified Li--Yau inequality]
\label{thm:modified-li-yau}
Let $u_t$ be the solution in Proposition~\ref{prop:global-flow}. Then
\begin{equation}
  -\Delta u_t(x)\le\frac{N}{2t}
  \qquad(x\in V,\ t>0).
  \label{eq:modified-li-yau}
\end{equation}
\end{theorem}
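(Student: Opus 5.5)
We follow Münch's maximum-principle scheme. Set $F_t:=-t\Delta u_t$ and aim to show $\sup_x F_t(x)\le N/2$ for every $t>0$.

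\emph{Evolution equation.} Differentiating \eqref{eq:modified-heat} and using \eqref{eq:bochner} gives
\[
  \partial_t\Delta u_t=\Delta^2u_t+\Delta\Gamma(u_t)
  =\Delta^2u_t+2\Gamma(u_t,\Delta u_t)+2\Gamma_2(u_t).
\]
Writing $L_t h:=\Delta h+2\Gamma(u_t,h)$, this becomes
\[
  \partial_t(-\Delta u_t)=L_t(-\Delta u_t)-2\Gamma_2(u_t).
\]
As in \eqref{eq:H-max-ineq}, $L_th(x)=\sum_{y\sim x}a_t(x,y)\bigl(h(y)-h(x)\bigr)$ with the same nonnegative coefficients $a_t(x,y)=1+u_t(y)-u_t(x)$, which are nonnegative by \eqref{eq:edge-smallness}. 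Hence
\[
  \partial_tF_t=-\Delta u_t+L_tF_t-2t\Gamma_2(u_t).
\]

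\emph{Point-mass curvature at the maximum.} At a vertex $x$ where $F_t$ is maximal and positive, $\Delta u_t(x)<0$ and $\Delta u_t(y)\ge\Delta u_t(x)$ for all $y\sim x$. Proposition~\ref{prop:extremal-CD} then gives $\Gamma_2(u_t)(x)\ge(\Delta u_t(x))^2/N$. Consequently, at that vertex,
\[
  \partial_tF_t\le\frac{F_t}{t}\Bigl(1-\frac{2F_t}{N}\Bigr)+L_tF_t,
  \qquad L_tF_t(x)\le 0,
\]
so $\partial_tF_t<0$ whenever $F_t(x)>N/2$. Since $F_0=0$, the ODE comparison shows that $\sup F_t$ can never cross $N/2$.

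\emph{Making this rigorous on infinite graphs.} Suprema need not be attained, so we replace exact maxima by approximate ones. Given $\eta>0$, pick $x$ with $F_t(x)\ge\sup F_t-\eta t$. Then $\Delta u_t(y)-\Delta u_t(x)\ge-\eta$ for $y\sim x$, and the approximate estimate \eqref{eq:approx-extremal-CD} applies. One can either invoke Lemma~\ref{lem:max-principle} directly, or introduce a distance penalisation $F_t-\varepsilon\dist(o,\cdot)$, which forces attained maxima while perturbing $L_t$ by at most $2d_*\varepsilon$. Then:
\begin{itemize}
\item Fix $T$ and suppose $\sup_{[0,T]\times V}(F_t-\varepsilon\dist(o,x)-\delta t)>N/2$.
\item At a first-crossing point $(t_0,x_0)$, the time derivative is $\ge0$ (with $t_0>0$, since $F_0=0$).
\item The evolution inequality above gives a strictly negative derivative, up to errors $O(\varepsilon+\delta)$, provided the crossing level is strictly above $N/2$.
\item Let $\varepsilon,\delta\to0$.
\end{itemize}
The attainment of the maximum is justified because the penalised function tends to $-\infty$ at spatial infinity: $F$ is bounded on $[0,T]$, since $u\in C^1$ into $\ell^\infty$.

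\emph{Main obstacle.} The main difficulty is this penalised maximum-principle bookkeeping near $t=0$ and at spatial infinity. In particular, one must check that the penalty only shifts $\Delta u_t$ comparisons by $O(\varepsilon)$, so that the $\eta=O(\varepsilon)$ version of Proposition~\ref{prop:extremal-CD} still yields the contradiction. We would try the time-weighted quantity above first and absorb the error terms into the strict gap $F>N/2+\kappa$.
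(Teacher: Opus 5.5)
Your proposal is correct and follows essentially the same route as the paper. Both proofs penalise $t\Delta u_t$ by $\varepsilon\dist(o,\cdot)$ on $[0,T]\times V$ to force attained extrema, apply the approximate extremal estimate of Proposition~\ref{prop:extremal-CD} with $\eta=\varepsilon/t$, bound the transport term below by $-2d_*\varepsilon/t$, and let $\varepsilon\downarrow0$. The differences are cosmetic: the paper takes the global minimiser over $[0,T]\times V$ and solves the quadratic inequality explicitly, rather than using a first crossing of the level $N/2+\kappa$, and your extra $-\delta t$ penalty is not needed.
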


\begin{proof}
We give a proof that covers infinite graphs without any a priori growth assumption. Fix a root $o\in V$, put
\[
  \rho(x):=\dist(o,x),
\]
and fix $T>0$. The boundedness of $u_t$ on $[0,T]$, together with boundedness of $\Delta$ on
$\ell^\infty(V)$, implies that
\[
  \Psi(t,x):=t\Delta u_t(x)
\]
is bounded on $[0,T]\times V$. Moreover $u\in C^1$ and
$\partial_tu=\Delta u+\Gamma(u)$; the boundedness and bilinearity recorded above imply that
the vector field $\mathcal F$ is $C^1$, hence $u\in C^2$ in the
$\ell^\infty$ norm. Thus $t\mapsto t\Delta u_t$ is continuously differentiable in
$\ell^\infty(V)$, which justifies the time derivative used at the penalised minimum. For $\varepsilon>0$, consider
\begin{equation}
  \Psi_\varepsilon(t,x):=\Psi(t,x)+\varepsilon\rho(x).
  \label{eq:penalised-Psi}
\end{equation}
This function attains a global minimum on $[0,T]\times V$. The assertion is immediate when $V$ is finite. When $V$ is infinite, put
\[
  M_T:=\sup_{[0,T]\times V}\abs{\Psi}<\infty.
\]
We have $\Psi_\varepsilon(0,o)=0$, whereas
\[
  \Psi_\varepsilon(t,x)\ge-M_T+\varepsilon\rho(x)>0
\]
if $\rho(x)>M_T/\varepsilon$. Consequently, the infimum over $[0,T]\times V$ equals the minimum over $[0,T]$ times a finite ball and is therefore attained.

Suppose first that the minimum is negative, and let $(t_\varepsilon,x_\varepsilon)$ be a minimiser. Then $t_\varepsilon>0$ and
\begin{equation}
  m:=\Delta u_{t_\varepsilon}(x_\varepsilon)<0.
  \label{eq:m-definition}
\end{equation}
If $t_\varepsilon<T$, the time derivative of $t\Delta u_t(x_\varepsilon)$ vanishes. If $t_\varepsilon=T$, its left derivative is nonpositive; because the solution is $C^1$ beyond $T$, this left derivative equals the ordinary derivative at $T$. Thus in either case
\begin{equation}
  \left.\partial_t^-\bigl(t\Delta u_t(x_\varepsilon)\bigr)
  \right|_{t=t_\varepsilon}\le0.
  \label{eq:time-minimum}
\end{equation}
Spatial minimality and the fact that $\rho$ is $1$-Lipschitz give, for every $y\sim x_\varepsilon$,
\begin{equation}
  \Delta u_{t_\varepsilon}(y)-m
  \ge-\frac{\varepsilon}{t_\varepsilon}.
  \label{eq:approx-spatial-minimum}
\end{equation}
Applying \eqref{eq:approx-extremal-CD} with $\eta=\varepsilon/t_\varepsilon$, and then using $d_{x_\varepsilon}\le d_*$ and $d_{x_\varepsilon}(d_{x_\varepsilon}+3)\le N$, gives
\begin{equation}
  \Gamma_2(u_{t_\varepsilon})(x_\varepsilon)
  \ge\frac1N
    \left(-m-\frac{d_*\varepsilon}{2t_\varepsilon}\right)_+^2.
  \label{eq:Gamma2-at-min}
\end{equation}

Using \eqref{eq:bochner},
\begin{align}
  \partial_t(t\Delta u_t)
  &=\Delta u_t+t\bigl(\Delta^2u_t+\Delta\Gamma(u_t)\bigr)\notag\\
  &=\Delta u_t+t\bigl(
      \Delta^2u_t+2\Gamma(u_t,\Delta u_t)+2\Gamma_2(u_t)
    \bigr).
  \label{eq:time-evolution-LY}
\end{align}
In the next two displays, all functions are evaluated at time $t_\varepsilon$. At $x_\varepsilon$ the transport part is
\begin{equation}
  \Delta^2u+2\Gamma(u,\Delta u)
  =\sum_{y\sim x_\varepsilon}
    \bigl(\Delta u(y)-m\bigr)
    \bigl(1+u(y)-u(x_\varepsilon)\bigr).
  \label{eq:transport-identity}
\end{equation}
Every second factor belongs to $[0,2]$ by \eqref{eq:edge-smallness}. Hence \eqref{eq:approx-spatial-minimum} implies
\begin{equation}
  \Delta^2u+2\Gamma(u,\Delta u)
  \ge-\frac{2d_*\varepsilon}{t_\varepsilon}.
  \label{eq:transport-lower-bound}
\end{equation}
Combining \eqref{eq:time-minimum}, \eqref{eq:Gamma2-at-min}, \eqref{eq:time-evolution-LY}, and \eqref{eq:transport-lower-bound}, and setting
\[
  z:=-t_\varepsilon m>0,
\]
we obtain
\begin{equation}
  0\ge-z-2d_*\varepsilon t_\varepsilon
  +\frac2N\left(z-\frac{d_*\varepsilon}{2}\right)_+^2.
  \label{eq:quadratic-z}
\end{equation}
Let
\[
  a_\varepsilon:=\frac{d_*\varepsilon}{2},
  \qquad
  b_\varepsilon:=2d_*\varepsilon T.
\]
If $z\le a_\varepsilon$, the bound below is automatic. Otherwise $w:=z-a_\varepsilon>0$ satisfies
\[
  \frac{2w^2}{N}\le w+a_\varepsilon+b_\varepsilon,
\]
so
\begin{equation}
  z\le a_\varepsilon
  +\frac N4\left(
     1+\sqrt{1+\frac{8(a_\varepsilon+b_\varepsilon)}N}
   \right)
  =\frac N2+o_{\varepsilon\downarrow0}(1).
  \label{eq:z-bound}
\end{equation}
At the minimiser,
\[
  \Psi_\varepsilon(t_\varepsilon,x_\varepsilon)
  =-z+\varepsilon\rho(x_\varepsilon)\ge-z.
\]
Therefore, for every fixed $(t,x)\in[0,T]\times V$,
\begin{align}
  t\Delta u_t(x)
  \ge{}&-a_\varepsilon
  -\frac N4\left(
     1+\sqrt{1+\frac{8(a_\varepsilon+b_\varepsilon)}N}
   \right)
  -\varepsilon\rho(x).
  \label{eq:prelimit-LY}
\end{align}
If the minimum of $\Psi_\varepsilon$ is nonnegative instead, then $\Psi_\varepsilon(t,x)\ge0$ for every $(t,x)$, so \eqref{eq:prelimit-LY} is immediate. Thus \eqref{eq:prelimit-LY} holds for every $\varepsilon>0$, regardless of the sign of the penalised minimum. Letting $\varepsilon\downarrow0$ yields
\[
  t\Delta u_t(x)\ge-\frac N2.
\]
Since $T$ was arbitrary, \eqref{eq:modified-li-yau} follows.
\end{proof}

The passage from the differential estimate to Harnack control uses the graph path-and-time
splitting argument of Bauer et al.\ \cite[Section~5]{BHLLMY} and, in the modified-flow setting,
the implementations of M\"unch \cite[Theorem~3.2]{MunchCDN} and Pajot and Russ
\cite[Section~7.2]{RussPajot}. The one-dimensional estimate below is the minimal-integral
estimate of \cite[Lemma~5.3]{BHLLMY}, in the form recorded in
\cite[Lemma~5.3]{MunchNonlinear}. We include a direct proof to fix the constants and to make
explicit that only continuity of the scalar function is required.

\begin{lemma}[One-dimensional optimisation]
\label{lem:one-dimensional}
Let $a:[s_1,s_2]\to[0,\infty)$ be continuous. Then
\begin{equation}
  \inf_{s\in[s_1,s_2]}
  \left\{
    \sqrt{2a(s)}-\int_s^{s_2}a(t)\,\dd t
  \right\}
  \le\frac{2}{s_2-s_1}.
  \label{eq:one-dimensional}
\end{equation}
\end{lemma}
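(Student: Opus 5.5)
Set $L:=s_2-s_1$ and argue by contradiction. Suppose that for every $s\in[s_1,s_2]$
\[
  \sqrt{2a(s)}-\int_s^{s_2}a(t)\,\dd t>\frac2L .
\]
Define $F(s):=\int_s^{s_2}a(t)\,\dd t$. Because $a$ is continuous, $F$ is $C^1$ with $F'(s)=-a(s)$, $F\ge0$ and $F(s_2)=0$. The assumption gives $\sqrt{2a(s)}>F(s)+2/L>0$. Squaring yields
\[
  a(s)>\tfrac12\bigl(F(s)+\tfrac2L\bigr)^2 ,
  \qquad\text{that is,}\qquad
  -F'(s)>\tfrac12\bigl(F(s)+\tfrac2L\bigr)^2 .
\]
This is a Riccati-type differential inequality, and the plan is to integrate it in reciprocal form.

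Let $G(s):=F(s)+2/L$. Then $G\ge 2/L>0$ and $G$ is $C^1$ with $G'=F'$. The inequality becomes $-G'>\tfrac12G^2$, so
\[
  \frac{\dd}{\dd s}\frac1{G(s)}=-\frac{G'(s)}{G(s)^2}>\frac12 .
\]
Integrating from $s_1$ to $s_2$ gives
\[
  \frac1{G(s_2)}-\frac1{G(s_1)}>\frac L2 .
\]
Since $G(s_2)=2/L$, we have $1/G(s_2)=L/2$. Hence $-1/G(s_1)>0$, which is impossible because $G(s_1)>0$.

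This contradiction shows that some $s$ satisfies $\sqrt{2a(s)}-F(s)\le 2/L$. The infimum is therefore at most $2/L$, which is \eqref{eq:one-dimensional}. The argument in fact gives the slightly stronger strict bound with the constant $2/L$ attained only in the limit. The infimum is over a continuous function on a compact interval, so it is a minimum; we do not need this, since exhibiting one point suffices.

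The only point needing care is the degenerate case $s_1=s_2$, where the right-hand side is interpreted as $+\infty$ and there is nothing to prove. The squaring step requires only the positivity of $F+2/L$, and that is automatic from $a\ge0$. I expect no real obstacle beyond choosing the reciprocal substitution, which linearises the Riccati inequality. Only continuity of $a$ is used, to make $F$ continuously differentiable so that the fundamental theorem of calculus applies to $1/G$.
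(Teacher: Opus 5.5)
Your proof is correct and is the same argument as the paper's. You assume the reverse strict inequality everywhere, square to get $a>\tfrac12\bigl(F+2/L\bigr)^2$, note that $\bigl(1/(F+2/L)\bigr)'>\tfrac12$, and integrate over $[s_1,s_2]$, which contradicts $1/G(s_2)=L/2$ together with $G(s_1)>0$.
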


\begin{proof}
Put
\[
  \tau:=s_2-s_1,
  \qquad
  C:=\frac2\tau,
  \qquad
  \mathcal A(s):=\int_s^{s_2}a(t)\,\dd t.
\]
Assume for contradiction that
\[
  \sqrt{2a(s)}-\mathcal A(s)>C
  \qquad\text{for every }s\in[s_1,s_2].
\]
Then
\[
  a(s)>\frac12(C+\mathcal A(s))^2.
\]
Since $\mathcal A'=-a$, this implies
\[
  \left(\frac1{C+\mathcal A(s)}\right)'
  =\frac{a(s)}{(C+\mathcal A(s))^2}
  >\frac12.
\]
Integration from $s_1$ to $s_2$ gives
\[
  \frac1C-\frac1{C+\mathcal A(s_1)}
  >\frac\tau2=\frac1C,
\]
which is impossible because the left-hand side is strictly smaller than $1/C$.
\end{proof}

\begin{corollary}[Modified-flow parabolic Harnack inequality]
\label{cor:harnack}
For every $x,y\in V$ and $0<T_1<T_2$,
\begin{equation}
  u_{T_1}(x)-u_{T_2}(y)
  \le\frac N2\log\frac{T_2}{T_1}
  +\frac{2\dist(x,y)^2}{T_2-T_1}.
  \label{eq:harnack}
\end{equation}
\end{corollary}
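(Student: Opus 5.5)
Pick a shortest path $x=x_0\sim x_1\sim\dots\sim x_k=y$ with $k=\dist(x,y)$. Split $[T_1,T_2]$ into $k$ equal subintervals $[s_{i},s_{i+1}]$, where $s_i:=T_1+i\tau$ and $\tau:=(T_2-T_1)/k$. The case $k=0$ is handled by integrating $\partial_t u\ge\Delta u\ge -N/(2t)$ directly. On each subinterval I will move from $x_i$ to $x_{i+1}$ at a well-chosen intermediate time, in the path-and-time splitting style of \cite{BHLLMY}. The time-derivative input is the modified Li--Yau inequality, Theorem~\ref{thm:modified-li-yau}, rewritten using the equation:
\[
  \partial_tu_t(z)=\Delta u_t(z)+\Gamma(u_t)(z)\ge-\frac{N}{2t}+\Gamma(u_t)(z).
\]
Integrating in time at a fixed vertex $z$ gives
\[
  u_{s}(z)-u_{s'}(z)\le\frac N2\log\frac{s'}{s}-\int_{s}^{s'}\Gamma(u_t)(z)\,\dd t
  \qquad(s<s').
\]

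For one step, fix $i$ and apply Lemma~\ref{lem:one-dimensional} on $[s_i,s_{i+1}]$ with $a(t):=\Gamma(u_t)(x_{i+1})$. This function is continuous and nonnegative because $u\in C^1([0,\infty),\ell^\infty)$. The lemma gives a time $s\in[s_i,s_{i+1}]$ with
\[
  \sqrt{2\Gamma(u_s)(x_{i+1})}-\int_s^{s_{i+1}}\Gamma(u_t)(x_{i+1})\,\dd t\le\frac2\tau.
\]
Then I chain three estimates:
\[
  u_{s_i}(x_i)-u_{s_{i+1}}(x_{i+1})
  =\bigl[u_{s_i}(x_i)-u_s(x_i)\bigr]+\bigl[u_s(x_i)-u_s(x_{i+1})\bigr]+\bigl[u_s(x_{i+1})-u_{s_{i+1}}(x_{i+1})\bigr].
\]
The first bracket is at most $\frac N2\log(s/s_i)$, since $\Gamma\ge0$ lets me drop the integral. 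The second is a spatial increment along an edge. The edge formula \eqref{eq:edge-gamma} gives $\frac12(u_s(x_i)-u_s(x_{i+1}))^2\le\Gamma(u_s)(x_{i+1})$, so the second bracket is at most $\sqrt{2\Gamma(u_s)(x_{i+1})}$. The third bracket is at most $\frac N2\log(s_{i+1}/s)-\int_s^{s_{i+1}}\Gamma(u_t)(x_{i+1})\,\dd t$. Summing, the integral term cancels against the gradient term exactly as the lemma is designed to, and the step is bounded by $\frac N2\log(s_{i+1}/s_i)+2/\tau$.

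Summing over $i=0,\dots,k-1$ telescopes the logarithms to $\frac N2\log(T_2/T_1)$. The $k$ error terms contribute $2k/\tau=2k^2/(T_2-T_1)=2\dist(x,y)^2/(T_2-T_1)$, which is \eqref{eq:harnack}.

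The main point needing care is the choice of vertex in the one-dimensional lemma. The gradient bound must be taken at the vertex where the time integral is computed, namely $x_{i+1}$, so that the edge increment is controlled by that same $\Gamma(u_s)(x_{i+1})$. The single-edge bound from \eqref{eq:edge-gamma} makes this legitimate, because each edge contributes to $\Gamma$ at both of its endpoints. Beyond that, only continuity of $t\mapsto\Gamma(u_t)(x_{i+1})$ is needed, and this follows from norm-continuity.
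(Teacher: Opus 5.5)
Your proposal is correct and follows essentially the same argument as the paper. You integrate $\partial_tu_t\ge\Gamma(u_t)-N/(2t)$ at the two ends of each edge on either side of an intermediate time $s$, bound the spatial jump by $\sqrt{2\Gamma(u_s)}$ at the later vertex, invoke Lemma~\ref{lem:one-dimensional}, and chain along a geodesic with $\dist(x,y)$ equal time subintervals; the only difference is that the paper first isolates the adjacent-vertex estimate and then iterates it.
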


\begin{proof}
The Li--Yau estimate and \eqref{eq:modified-heat} give
\begin{equation}
  \partial_tu_t\ge\Gamma(u_t)-\frac{N}{2t}.
  \label{eq:differential-harnack}
\end{equation}
First suppose $x\sim y$. For $s\in[T_1,T_2]$, integrate \eqref{eq:differential-harnack} at $x$ on $[T_1,s]$ and at $y$ on $[s,T_2]$. Adding the two inequalities and dropping the nonpositive term $-\int_{T_1}^s\Gamma(u_t)(x)\,\dd t$ gives
\begin{align*}
  u_{T_1}(x)-u_{T_2}(y)
  &\le\frac N2\log\frac{T_2}{T_1}
  +\abs{u_s(x)-u_s(y)}
  -\int_s^{T_2}\Gamma(u_t)(y)\,\dd t\\
  &\le\frac N2\log\frac{T_2}{T_1}
  +\sqrt{2\Gamma(u_s)(y)}
  -\int_s^{T_2}\Gamma(u_t)(y)\,\dd t.
\end{align*}
The second inequality follows from the single-edge contribution to \eqref{eq:edge-gamma}. The function $s\mapsto\Gamma(u_s)(y)$ is continuous because $u$ is norm-continuous and $\Gamma$ is a continuous quadratic map. Minimising over $s$ and applying Lemma~\ref{lem:one-dimensional} gives
\begin{equation}
  u_{T_1}(x)-u_{T_2}(y)
  \le\frac N2\log\frac{T_2}{T_1}
  +\frac2{T_2-T_1}
  \qquad(x\sim y).
  \label{eq:adjacent-harnack}
\end{equation}
For general $x\ne y$, let
\[
  x=x_0,x_1,\ldots,x_\ell=y
\]
be a geodesic, where $\ell=\dist(x,y)$. Divide $[T_1,T_2]$ into $\ell$ equal subintervals and apply \eqref{eq:adjacent-harnack} successively. The logarithmic terms telescope to $(N/2)\log(T_2/T_1)$, while the spatial terms sum to
\[
  \ell\frac2{(T_2-T_1)/\ell}
  =\frac{2\ell^2}{T_2-T_1}.
\]
The case $x=y$ follows directly by integrating \eqref{eq:differential-harnack} and dropping the nonnegative $\Gamma$ term.
\end{proof}

\section{Volume doubling and polynomial growth}\label{sec:doubling-proof}

We now combine the semigroup smoothing from Subsection~\ref{subsec:resolvent}, the nonlinear
comparisons from Subsection~\ref{subsec:modified-flow}, and the modified-flow Harnack estimate
from Subsection~\ref{subsec:li-yau}. The cutoff--Harnack--mass-conservation architecture is due
to M\"unch \cite[Section~4.1]{MunchCDN}. Pajot and Russ adapted it to infinite volume by
decomposing the exponential cutoff into a constant background and an $\ell^1$ perturbation,
and by applying mass conservation only to the latter \cite[Section~8]{RussPajot}. We use
precisely this infinite-volume device, while replacing the finite-dimensional curvature inputs by
the point-mass and resolvent estimates proved above.

Fix the parameters from \eqref{eq:N-definition} and \eqref{eq:theta-beta}. Put
\begin{equation}
  \ell_*:=\log(64Nd_*),
  \qquad
  C:=16N\ell_*.
  \label{eq:C-definition}
\end{equation}
Define
\begin{equation}
  Q_0:=2\log2
  +\frac N2\log(32d_*C^2)
  +\frac N2.
  \label{eq:Q0-definition}
\end{equation}
The deliberately generous choice of $C$ ensures a strict gap.

\begin{lemma}[Parameter gap]
\label{lem:parameter-gap}
With \eqref{eq:C-definition}--\eqref{eq:Q0-definition},
\begin{equation}
  C-Q_0\ge1.
  \label{eq:parameter-gap}
\end{equation}
\end{lemma}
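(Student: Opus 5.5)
The plan is to verify \eqref{eq:parameter-gap} by direct elementary estimation. All quantities are explicit functions of $d_*\ge2$, with $N=d_*(d_*+3)\ge10$ by \eqref{eq:N-definition}. First I expand the logarithm in $Q_0$. Since $C=16N\ell_*$, we have $C^2=256N^2\ell_*^2$, and hence
\[
  32d_*C^2=8192\,d_*N^2\ell_*^2 .
\]
Next I compare this with $(64Nd_*)^2=4096N^2d_*^2$. Using $d_*\ge2$ gives $8192\,d_*N^2\le 4096N^2d_*^2$, and therefore
\[
  \log(32d_*C^2)\le 2\ell_*+2\log\ell_* .
\]
The crude bound $\log\ell_*\le\ell_*$ then yields $\log(32d_*C^2)\le4\ell_*$.

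Substituting this into \eqref{eq:Q0-definition} gives
\[
  Q_0\le 2\log2+2N\ell_*+\frac N2 .
\]
Consequently
\[
  C-Q_0\ge 14N\ell_*-\frac N2-2\log2 .
\]
It remains to check that this lower bound is at least $1$. Since $N\ge10$ and $d_*\ge2$, we have $\ell_*=\log(64Nd_*)\ge\log1280>7$. Hence $14N\ell_*\ge 98N$, which dominates $\frac N2+2\log2+1$ by a wide margin. This proves \eqref{eq:parameter-gap}.

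There is no genuine obstacle here. The only point needing care is to keep the logarithmic term $\frac N2\log(32d_*C^2)$ linear in $N\ell_*$ with a coefficient well below $16$. The self-referential $\log\ell_*$ arising from $C^2$ must be absorbed, which the inequality $\log\ell_*\le\ell_*$ does with room to spare. If sharper constants were wanted, one would instead use $\log\ell_*\le\ell_*/e$, but this is unnecessary for the stated bound.
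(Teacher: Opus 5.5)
Your proof is correct and follows essentially the same route as the paper: both reduce to the bound $\log(32d_*C^2)\le4\ell_*$ and then absorb the remaining terms using $N\ge10$ and $\ell_*>7$. The paper gets this bound from $\log C\le\frac32\ell_*$ and $\log(32d_*)\le\ell_*$, while you get it from $32d_*C^2\le(64Nd_*\ell_*)^2$ and $\log\ell_*\le\ell_*$, which is only a cosmetic difference.
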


\begin{proof}
Since $d_*\ge2$, one has $N\ge10$ and $\ell_*>7$. Moreover,
\[
  \log(16N)\le\ell_*,
  \qquad
  \log\ell_*\le\frac{\ell_*}{2},
  \qquad
  \log(32d_*)\le\ell_*.
\]
Thus
\[
  \log C\le\frac{3\ell_*}{2}
\]
and
\[
  \log(32d_*C^2)\le4\ell_*.
\]
It follows that
\[
  Q_0\le2\log2+2N\ell_*+\frac N2\le3N\ell_*,
\]
because $2\log2+N/2\le N\ell_*$. Consequently,
\[
  C-Q_0\ge13N\ell_*>1.
\]
\end{proof}

Let
\begin{equation}
  r_0:=\left\lceil 2C\sqrt{d_*}\right\rceil.
  \label{eq:r0-definition}
\end{equation}

\begin{proof}[Proof of Theorem~\ref{thm:doubling}]
The cases $d_*\le1$ were settled at the beginning of Section~\ref{sec:preliminary}, so assume $d_*\ge2$. Fix $x\in V$ and an integer $r\ge r_0$. Set
\begin{equation}
  u_0(y):=-C\min\left\{\frac{\dist(x,y)}r,1\right\}.
  \label{eq:cutoff-u0}
\end{equation}
For every $z\in V$, the edge Lipschitz bound gives
\begin{equation}
  \Gamma(u_0)(z)
  \le\frac12\sum_{y\sim z}\frac{C^2}{r^2}
  \le\frac{d_*C^2}{2r^2}
  \le\frac18
  <\frac12.
  \label{eq:cutoff-gradient}
\end{equation}
Let $u_t$ be the global solution of \eqref{eq:modified-heat} with this initial datum.

The first step is a lower bound at the centre for a suitable short time. By \eqref{eq:semigroup-shift} and \eqref{eq:cutoff-gradient},
\begin{equation}
  \norm{P_tu_0-u_0}_\infty
  \le\frac{C\sqrt{Nd_*t}}r.
  \label{eq:u0-smoothing}
\end{equation}
Since $u_0(x)=0$, the lower comparison \eqref{eq:lower-comparison} and $\e^z\ge1+z$ yield
\begin{align}
  \e^{\theta u_t(x)}
  &\ge P_t\e^{\theta u_0}(x)
   \ge1+\theta P_tu_0(x)\notag\\
  &\ge1-\frac{\theta C\sqrt{Nd_*t}}r.
  \label{eq:centre-lower-pre}
\end{align}
Choose
\begin{equation}
  t_0:=\frac{r^2}{4\theta^2C^2Nd_*}
  =\frac{r^2}{C^2Nd_*}.
  \label{eq:t0-definition}
\end{equation}
Then \eqref{eq:centre-lower-pre} gives
\begin{equation}
  u_{t_0}(x)\ge-\frac{\log2}{\theta}=-2\log2.
  \label{eq:centre-lower}
\end{equation}

Put
\begin{equation}
  R_0:=2r,
  \qquad
  T:=\frac{8R_0^2}{N}.
  \label{eq:R0-T-definition}
\end{equation}
A direct calculation gives
\begin{equation}
  \frac{T}{t_0}=32d_*C^2>2.
  \label{eq:T-t0-ratio}
\end{equation}
For $y\in B(x,R_0)$, the Harnack estimate \eqref{eq:harnack}, \eqref{eq:centre-lower}, and $T-t_0\ge T/2$ imply
\begin{align}
  u_T(y)
  &\ge u_{t_0}(x)
  -\frac N2\log\frac{T}{t_0}
  -\frac{2R_0^2}{T-t_0}\notag\\
  &\ge-2\log2
  -\frac N2\log(32d_*C^2)
  -\frac N2
  =-Q_0.
  \label{eq:ball-lower}
\end{align}

We next use the large-exponent comparison. Let
\[
  V_x(s):=\#B(x,s).
\]
Summing \eqref{eq:upper-comparison} over $B(x,R_0)$ and using \eqref{eq:ball-lower},
\begin{equation}
  \e^{-\beta Q_0}V_x(R_0)
  \le\sum_{y\in B(x,R_0)}P_T\e^{\beta u_0}(y).
  \label{eq:mass-start}
\end{equation}
Write
\begin{equation}
  \e^{\beta u_0}=\e^{-\beta C}+h.
  \label{eq:baseline-subtraction}
\end{equation}
Then $0\le h\le1$ and $\supp h\subseteq B(x,r)$, so in particular $h\in\ell^1(V)$. The background subtraction in \eqref{eq:baseline-subtraction} is essential on an infinite graph: mass conservation is applied only to the integrable perturbation $h$, never to the constant baseline. By \eqref{eq:mass-conservation},
\begin{align}
  \e^{-\beta Q_0}V_x(R_0)
  &\le\e^{-\beta C}V_x(R_0)
  +\sum_{y\in B(x,R_0)}P_Th(y)\notag\\
  &\le\e^{-\beta C}V_x(R_0)
  +\sum_{y\in V}P_Th(y)\notag\\
  &=\e^{-\beta C}V_x(R_0)+\sum_{y\in V}h(y)\notag\\
  &\le\e^{-\beta C}V_x(R_0)+V_x(r).
  \label{eq:mass-conservation-use}
\end{align}
Using $C-Q_0\ge1$, we obtain
\begin{equation}
  V_x(2r)=V_x(R_0)
  \le K_{\mathrm{heat}}V_x(r),
  \qquad
  K_{\mathrm{heat}}:=\frac{\e^{\beta Q_0}}{1-\e^{-\beta}}.
  \label{eq:heat-doubling}
\end{equation}
This holds uniformly for every integer $r\ge r_0$.

For $1\le r<r_0$, the number of vertices at distance $k$ from $x$ is at most $d_*^k$. Since $B(x,2r)$ contains only vertices at integer distance less than $2r$, the deliberately coarse degree estimate gives
\begin{equation}
  V_x(2r)
  \le\sum_{k=0}^{2r_0}d_*^k
  \le d_*^{2r_0+1}
  =:K_{\mathrm{loc}},
  \label{eq:local-doubling}
\end{equation}
while $V_x(r)\ge1$. Combining the elementary and nontrivial degree cases, a valid global doubling constant is
\begin{equation}
  K(d_*):=
  \begin{cases}
    2,&d_*\le1,\\[1mm]
    \displaystyle
    \max\left\{
      d_*^{2r_0+1},
      \frac{\e^{2Q_0}}{1-\e^{-2}}
    \right\},&d_*\ge2,
  \end{cases}
  \label{eq:K-final}
\end{equation}
where, in the second branch, $N,\ell_*,C,Q_0,r_0$ are given by \eqref{eq:N-definition}, \eqref{eq:C-definition}, \eqref{eq:Q0-definition}, and \eqref{eq:r0-definition}. This proves Theorem~\ref{thm:doubling}.
\end{proof}

The corollary is the direct dyadic iteration of the theorem.

\begin{proof}
For Corollary~\ref{cor:poly}, because the balls are open,
\[
B(x,1)=\{x\}.
\]
Iterating Theorem~\ref{thm:doubling} gives
\[
\#B(x,2^k)\leq K(d_*)^k.
\]
Let $r\geq2$ be an integer and put
\[
k:=\lceil\log_2r\rceil.
\]
Then
\begin{align*}
\#B(x,r)
&\leq K(d_*)^k\\
&\leq K(d_*)r^{\log_2K(d_*)}\\
&\leq r^{2\log_2K(d_*)}.
\end{align*}
The last inequality uses $r\geq2$ and $K(d_*)\geq2$. The case $r=1$ is an equality. Taking
the ceiling of the exponent gives the stated value of $D(d_*)$.
\end{proof}

\section{Heat-kernel localisation for Poincar\'e inequalities}\label{sec:poincare-localisation}

This section proves the local analytic estimate needed for Theorem~\ref{thm:poincare-two}.
First, reverse Poincar\'e smoothing controls the total-variation distance between nearby
heat-kernel rows. The semigroup estimate from Subsection~\ref{subsec:resolvent} gives a
uniform first-moment displacement bound for the associated walk, and a standard
strong-Markov maximal argument converts it into an exit-time estimate. We then compare the
full walk with the finite-state chain obtained by rejecting jumps that leave a large ball. A
finite-dimensional localisation lemma converts the resulting row overlap into a spectral-gap
estimate. Spectral calculus yields a scale-invariant Poincar\'e inequality with a fixed
degree-dependent dilation. Volume doubling is not used in this section.

\subsection{Heat-kernel row contraction and exit-time control}\label{subsec:exit-control}

For probability measures on a countable set, use the convention
\[
\|\mu-\nu\|_{\TV}
:=\sup_{0\leq h\leq1}|\mu(h)-\nu(h)|.
\]

The reverse Poincar\'e estimate below is the standard graph-semigroup consequence of
$\mathrm{CD}(0,\infty)$; see, for example, \cite{LinLiu}. We include the short proof to fix the
normalisation and the total-variation constant.

\begin{lemma}[Reverse Poincar\'e and heat-kernel row contraction]\label{lem:reverse-poincare}
For every bounded $h:V\to\R$ and every $t>0$,
\begin{equation}\label{eq:reverse-poincare}
P_t(h^2)-(P_th)^2
\geq 2t\,\Gamma(P_th)
\end{equation}
pointwise. Consequently,
\begin{equation}\label{eq:tv-contraction}
\|p_t(x,\cdot)-p_t(y,\cdot)\|_{\TV}
\leq
\frac{\dist(x,y)}{2\sqrt t}
\end{equation}
for all $x,y\in V$.
\end{lemma}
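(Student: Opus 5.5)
For \eqref{eq:reverse-poincare} I would use the standard interpolation along the semigroup, this time with $P_s$ applied to $\Gamma$ rather than $\Gamma_2$. Fix bounded $h$ and $t>0$, and set
\[
\Lambda(s):=P_s\bigl((P_{t-s}h)^2\bigr),\qquad 0\le s\le t.
\]
Differentiation in $\ell^\infty(V)$ gives
\[
\Lambda'(s)=P_s\bigl(\Delta(g^2)-2g\Delta g\bigr)=2P_s\Gamma(P_{t-s}h),\qquad g=P_{t-s}h,
\]
and integrating from $0$ to $t$ yields
\[
P_t(h^2)-(P_th)^2=2\int_0^tP_s\Gamma(P_{t-s}h)\,\dd s .
\]
Next I would show that $P_s\Gamma(P_{t-s}h)\ge\Gamma(P_sP_{t-s}h)=\Gamma(P_th)$ for each $s$. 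This follows from \eqref{eq:interpolation} applied to the function $P_{t-s}h$ at time $s$: the right-hand side there is nonnegative by \eqref{eq:standing-CD} and positivity of $P_u$. Integrating this pointwise lower bound over $s\in[0,t]$ gives $2t\,\Gamma(P_th)$, which is \eqref{eq:reverse-poincare}.

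For \eqref{eq:tv-contraction}, it suffices by the triangle inequality along a geodesic to treat $x\sim y$ and prove the bound $1/(2\sqrt t)$, since the distances then add. Fix $0\le h\le1$. Then
\[
P_t(h^2)-(P_th)^2\le P_th-(P_th)^2\le\tfrac14 .
\]
By \eqref{eq:reverse-poincare} this gives $\Gamma(P_th)(x)\le 1/(8t)$. The single-edge contribution in \eqref{eq:edge-gamma} gives $\frac12|P_th(y)-P_th(x)|^2\le\Gamma(P_th)(x)$, so
\[
|P_th(x)-P_th(y)|\le \frac{1}{2\sqrt t}.
\]
Since $P_th(x)=\sum_z p_t(x,z)h(z)$, taking the supremum over $0\le h\le1$ yields the total-variation bound with the stated convention.

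The only delicate point I anticipate is the justification that $s\mapsto\Lambda(s)$ is $C^1$ in $\ell^\infty(V)$ and that the pointwise inequality $P_s\Gamma(g)\ge\Gamma(P_sg)$ holds for every bounded $g$. Both are routine here: the first holds because $\Delta$ is bounded and the squaring map is smooth, exactly as in \eqref{eq:interpolation}. The constant bookkeeping is fully explicit, using $h-h^2\le\frac14$ and the single-edge bound, so I do not expect a genuine obstacle.
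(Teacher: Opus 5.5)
Your proposal is correct and follows the paper's proof essentially step for step. You use the same interpolation $s\mapsto P_s\bigl((P_{t-s}h)^2\bigr)$, the same application of \eqref{eq:interpolation} at time $s$ to obtain $P_s\Gamma(P_{t-s}h)\ge\Gamma(P_th)$, and the same bound $h-h^2\le\frac14$ together with the single-edge estimate; the only cosmetic difference is that you reduce to adjacent vertices via the triangle inequality for $\|\cdot\|_{\TV}$, whereas the paper sums the edge bound along a geodesic for fixed $h$ before taking the supremum.
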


\begin{proof}
Fix $t>0$. Since $\Delta$ is bounded, $s\mapsto P_s$ is differentiable in operator
norm. Multiplication is a bounded bilinear map on $\ell^\infty(V)$, so
$s\mapsto(P_{t-s}h)^2$ is continuously differentiable in $\ell^\infty(V)$. Therefore
\[
\frac{\dd}{\dd s}
P_s\bigl((P_{t-s}h)^2\bigr)
=2P_s\Gamma(P_{t-s}h)
\]
in the $\ell^\infty$ norm, and the ensuing integrals are Bochner integrals.
After integration,
\[
P_t(h^2)-(P_th)^2
=2\int_0^tP_s\Gamma(P_{t-s}h)\,\dd s.
\]
For a fixed $s\in[0,t]$, apply the interpolation identity
\eqref{eq:interpolation} over the time interval $s$ to the function $P_{t-s}h$. This gives
\[
P_s\Gamma(P_{t-s}h)-\Gamma(P_th)
=2\int_0^sP_u\Gamma_2(P_{t-u}h)\,\dd u
\geq0.
\]
Substitution into the preceding integral proves \eqref{eq:reverse-poincare}.
Now let $0\leq h\leq1$. Since
\[
P_t(h^2)-(P_th)^2
\leq P_th-(P_th)^2
\leq\frac14,
\]
we obtain
\[
\Gamma(P_th)\leq\frac1{8t}.
\]
If $x\sim y$, the single-edge contribution at $x$ gives
\[
\frac12\bigl(P_th(y)-P_th(x)\bigr)^2
\leq\Gamma(P_th)(x).
\]
Therefore
\[
|P_th(y)-P_th(x)|
\leq\frac1{2\sqrt t}.
\]
Summing along a geodesic from $x$ to $y$ and taking the supremum over $0\leq h\leq1$ proves
\eqref{eq:tv-contraction}.
\end{proof}

Let $(X_t)_{t\geq0}$ be the continuous-time Markov chain with semigroup $P_t$. It can be
realised by rate-$d_*$ Poissonisation of the uniformising kernel $I+\Delta/d_*$. The Poisson
clock has finitely many rings on every bounded time interval, so the chain is nonexplosive;
the standard construction also gives the strong Markov property. See, for example,
\cite{LevinPeres}.

The exit estimate below is an Ottaviani--Skorokhod-type strong-Markov maximal argument
applied to the first-moment bound furnished by \eqref{eq:semigroup-shift}; compare
\cite{KuhnSchillingMaximal}. We include the argument because only the weak diffusive bound of
order $\sqrt t/R$ is needed.

\begin{lemma}[Diffusive displacement and exit control]\label{lem:exit-control}
For every $x\in V$ and every $t\geq0$,
\begin{equation}\label{eq:displacement}
\mathbb E_x\dist(x,X_t)\leq\sqrt{Nd_*t}.
\end{equation}
For $R>0$, let
\[
\tau_R:=\inf\{s\geq0:\dist(x,X_s)\geq R\}.
\]
If $R\geq4\sqrt{Nd_*t}$, then
\begin{equation}\label{eq:exit-control}
\mathbb P_x(\tau_R\leq t)
\leq\frac{4\sqrt{Nd_*t}}R.
\end{equation}
\end{lemma}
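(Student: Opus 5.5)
The plan is to derive \eqref{eq:displacement} from the semigroup shift estimate \eqref{eq:semigroup-shift}, applied to a truncated distance function. Then \eqref{eq:exit-control} will follow from a one-step strong-Markov argument at the exit time, combined with Markov's inequality.

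\emph{Displacement.} Fix $x$ and, for $M\in\mathbb N$, set $f_M(y):=\min\{\dist(x,y),M\}$. This function is bounded and $1$-Lipschitz along edges, so by \eqref{eq:edge-gamma} we get $\Gamma(f_M)\le d_*/2$ pointwise. Since $f_M(x)=0$ and $f_M\ge0$, \eqref{eq:semigroup-shift} gives
\[
\mathbb E_xf_M(X_t)=P_tf_M(x)-f_M(x)\le\sqrt{2Nt\cdot d_*/2}=\sqrt{Nd_*t}.
\]
Letting $M\to\infty$ by monotone convergence yields \eqref{eq:displacement}. The bound is uniform in the starting point, and it is nondecreasing in $t$. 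This uniformity is what the next step uses.

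\emph{Exit control.} Put $m:=\sqrt{Nd_*t}$ and $\tau:=\tau_R$. The chain moves by nearest-neighbour jumps and is nonexplosive, so on $\{\tau\le t\}$ we have $\dist(x,X_\tau)\ge R$. The triangle inequality then gives $\dist(x,X_t)\ge R-\dist(X_\tau,X_t)$. By the strong Markov property at $\tau$, together with \eqref{eq:displacement} applied from $X_\tau$ over the remaining time $t-\tau\le t$, we get
\[
\mathbb E_x\bigl[\dist(X_\tau,X_t)\mid\mathcal F_\tau\bigr]\le m
\qquad\text{on }\{\tau\le t\}.
\]
Markov's inequality then gives $\mathbb P_x\bigl(\dist(X_\tau,X_t)\ge R/2\mid\mathcal F_\tau\bigr)\le 2m/R\le\frac12$, using $R\ge4m$. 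Consequently
\[
\mathbb P_x\bigl(\dist(x,X_t)\ge R/2\bigr)\ge\tfrac12\,\mathbb P_x(\tau\le t).
\]
On the other hand, Markov's inequality and \eqref{eq:displacement} give $\mathbb P_x(\dist(x,X_t)\ge R/2)\le 2m/R$. Combining the two bounds yields $\mathbb P_x(\tau\le t)\le 4m/R$, which is \eqref{eq:exit-control}.

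The main point requiring care is the strong-Markov step. The conditional displacement bound must hold with the random remaining time $t-\tau$. I would handle this by using the uniformity of \eqref{eq:displacement} in both the starting vertex and the time, noting that $s\mapsto\sqrt{Nd_*s}$ is monotone. The function $(s,y)\mapsto\mathbb E_y\dist(y,X_s)$ is measurable, because it is an increasing limit of the continuous maps $s\mapsto P_sf^{(y)}_M(y)$, where $f^{(y)}_M:=\min\{\dist(y,\cdot),M\}$. This makes the conditioning on $\mathcal F_\tau$ legitimate. Everything else is routine.
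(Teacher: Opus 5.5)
Your proposal is correct and follows essentially the same route as the paper. The displacement bound comes from the truncated distance $\min\{\dist(x,\cdot),M\}$ and the semigroup shift estimate. The exit bound comes from the strong Markov property at $\tau_R$ and Markov's inequality, giving $\mathbb P_x(\tau_R\le t)\le 2\,\mathbb P_x(\dist(x,X_t)\ge R/2)$. Your explicit handling of the random remaining time $t-\tau_R$, using monotonicity in $s$ and joint measurability, is a slightly more careful version of the paper's supremum over $z\in V$ and $0\le s\le t$.
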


\begin{proof}
For $M>0$, put
\[
\rho_M(z):=\min\{\dist(x,z),M\}.
\]
This function is bounded and $1$-Lipschitz on edges. Hence
\[
\Gamma(\rho_M)(z)
\leq\frac12\sum_{w\sim z}1
\leq\frac{d_*}{2}.
\]
Since $\rho_M(x)=0$, the semigroup estimate \eqref{eq:semigroup-shift} gives
\[
P_t\rho_M(x)
\leq\sqrt{Nd_*t}.
\]
Letting $M\to\infty$ and using monotone convergence proves \eqref{eq:displacement}.

On the event $\{\tau_R\leq t\}$, set $Z:=X_{\tau_R}$. If also
$\dist(x,X_t)<R/2$, then the triangle inequality gives $\dist(Z,X_t)\geq R/2$. Therefore,
on $\{\tau_R\leq t\}$, the strong Markov property, \eqref{eq:displacement}, and Markov's
inequality give
\begin{align*}
&\mathbb P_x\left(
  \dist(x,X_t)<\frac R2
  \,\middle|\,
  \mathcal F_{\tau_R}
\right)\\
&\qquad\leq
\sup_{\substack{z\in V\\0\leq s\leq t}}
\mathbb P_z\left(
  \dist(z,X_s)\geq\frac R2
\right)\\
&\qquad\leq
\frac{2\sqrt{Nd_*t}}R
\leq\frac12.
\end{align*}
Consequently,
\[
\mathbb P_x\left(
  \tau_R\leq t,\ \dist(x,X_t)<\frac R2
\right)
\leq\frac12\mathbb P_x(\tau_R\leq t).
\]
Decomposing according to the endpoint position,
\begin{align*}
\mathbb P_x(\tau_R\leq t)
&\leq
\mathbb P_x\left(\dist(x,X_t)\geq\frac R2\right)
+\frac12\mathbb P_x(\tau_R\leq t).
\end{align*}
A final application of Markov's inequality and \eqref{eq:displacement} yields
\[
\mathbb P_x(\tau_R\leq t)
\leq2\mathbb P_x\left(\dist(x,X_t)\geq\frac R2\right)
\leq\frac{4\sqrt{Nd_*t}}R.
\]
\end{proof}

\subsection{Censored kernels and localisation}\label{subsec:censoring}

The full heat kernel is not supported in a finite ball, whereas the desired energy is local.
We therefore compare the full walk with the finite-state walk obtained by rejecting every
attempted jump from a large ball to its complement. The exit estimate from
Subsection~\ref{subsec:exit-control} makes the coupling error small, and the resulting row
overlap is converted into a spectral-gap estimate by a finite-dimensional localisation lemma.

The broad finite-volume semigroup strategy has classical precedents. A
Kusuoka--Stroock-type argument converts lower bounds for a localised heat kernel into a
Poincar\'e estimate through finite-time semigroup dissipation; see
\cite[Theorem~5.10]{KusuokaStroockIII} and the modern formulation
\cite[Lemma~5.5]{GISCPoincare}. In the graph setting, Delmotte
\cite[Section~3.4, especially Theorem~3.11]{Delmotte} restricts the walk to an enlarged ball, obtains a lower bound
for the resulting finite-volume kernel on the smaller ball, and controls the semigroup
dissipation by the local edge energy. Our proof follows this general architecture, but replaces
a pointwise lower heat-kernel bound by total-variation overlap of rows and the
posterior/Dobrushin lemma below.

For a finite set $D\subset V$, define the censored, or induced-subgraph, generator
\begin{equation}\label{eq:censored-generator}
\Delta_Dg(x)
:=\sum_{\substack{y\in D\\y\sim x}}
\bigl(g(y)-g(x)\bigr),
\qquad x\in D.
\end{equation}
Write
\[
P_t^D:=e^{t\Delta_D}
\]
and let $p_t^D(x,y)$ be its kernel. Under rate-$d_*$ uniformisation, a proposed jump from
$D$ to $D^c$ is rejected and becomes an additional holding event. The kernel is symmetric and
stochastic with respect to counting measure on $D$. When it is compared with a probability
measure on $V$, it is extended by zero on $D^c$.

\begin{lemma}[Censored finite-volume row overlap]\label{lem:censored-overlap}
Set
\begin{equation}\label{eq:Lambda}
\Lambda
:=1+64\sqrt{Nd_*}
=1+64d_*\sqrt{d_*+3}.
\end{equation}
Fix $o\in V$ and an integer $r\geq1$. Put
\[
B:=B(o,r),
\qquad
D:=B(o,\Lambda r),
\qquad
t:=4r^2.
\]
The set $D$ is finite by Assumption~\ref{ass:degree}, and
\[
\sup_{x,y\in B}
\|p_t^D(x,\cdot)-p_t^D(y,\cdot)\|_{\TV}
\leq\frac34.
\]
\end{lemma}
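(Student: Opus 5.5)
We prove the bound by the triangle inequality in total variation, splitting
\[
\|p_t^D(x,\cdot)-p_t^D(y,\cdot)\|_{\TV}
\le\|p_t^D(x,\cdot)-p_t(x,\cdot)\|_{\TV}
+\|p_t(x,\cdot)-p_t(y,\cdot)\|_{\TV}
+\|p_t(y,\cdot)-p_t^D(y,\cdot)\|_{\TV}.
\]
The middle term is handled by Lemma~\ref{lem:reverse-poincare}. For $x,y\in B=B(o,r)$ we have $\dist(x,y)\le2(r-1)<2r$, so \eqref{eq:tv-contraction} with $t=4r^2$ gives
\[
\|p_t(x,\cdot)-p_t(y,\cdot)\|_{\TV}\le\frac{2r}{2\cdot2r}=\frac12.
\]
It then suffices to show that each coupling error is at most $1/8$.

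For the coupling, we realise both chains from the same rate-$d_*$ Poisson clock and the same proposed uniformised moves. A proposal from $z$ picks each neighbour with probability $1/d_*$ and otherwise holds. The full chain $X$ executes every proposal. The censored chain executes it only when the proposed target lies in $D$, and holds otherwise. This censored chain has generator $\Delta_D$, since the rate of moving from $z\in D$ to a neighbour $w\in D$ is $d_*\cdot d_*^{-1}=1$.

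The two paths agree up to the first time $X$ proposes a jump into $D^c$. That jump is executed by $X$, so it is the first time $X$ leaves $D$. Starting from $x\in B$, leaving $D=B(o,\Lambda r)$ requires $\dist(x,X_s)\ge\Lambda r-r=(\Lambda-1)r$. Hence
\[
\|p_t^D(x,\cdot)-p_t(x,\cdot)\|_{\TV}\le\mathbb P_x\bigl(\tau_{(\Lambda-1)r}\le t\bigr),
\]
using that for any coupling of the two laws, the total-variation distance (with the sup-over-$[0,1]$-test-functions convention) is at most the probability the coupled variables differ. Here $\tau_R$ is the exit time of Lemma~\ref{lem:exit-control} centred at $x$.

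To bound this probability, apply \eqref{eq:exit-control} with $R=(\Lambda-1)r=64\sqrt{Nd_*}\,r$ and $\sqrt{Nd_*t}=2r\sqrt{Nd_*}$. The hypothesis $R\ge4\sqrt{Nd_*t}=8r\sqrt{Nd_*}$ holds, and the bound is
\[
\frac{4\cdot2r\sqrt{Nd_*}}{64\sqrt{Nd_*}\,r}=\frac18.
\]
Summing the three terms gives $\tfrac18+\tfrac12+\tfrac18=\tfrac34$.

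The delicate step is the coupling: we must check that the censored uniformised chain really has semigroup $P^D_t=e^{t\Delta_D}$, and that the paths coincide until $X$ exits $D$. It also matters that $\tau$ is measured from $x$ rather than $o$, which costs only the shift by $r$ already built into $\Lambda-1$. Finiteness of $D$ follows from the degree bound.
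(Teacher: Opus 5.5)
Your proof is correct and follows the paper's proof essentially step for step. Like the paper, you use the triangle inequality with the reverse-Poincar\'e row bound $\frac12$, and you bound each coupling error by $\frac18$ via Lemma~\ref{lem:exit-control} with $R=(\Lambda-1)r=32\sqrt{Nd_*t}$, using the common-clock coupling of the full and censored walks. The one point you leave implicit, how the two chains are driven after they first separate, does not matter: the paper likewise just says ``continue with any coupling''.
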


\begin{proof}
For $x,y\in B$,
\[
\dist(x,y)<2r.
\]
Therefore Lemma~\ref{lem:reverse-poincare} gives
\[
\|p_t(x,\cdot)-p_t(y,\cdot)\|_{\TV}
\leq\frac12.
\]
Fix $x\in B$. If $D=V$, there is nothing to compare. Otherwise, every point of $D^c$ is at
distance greater than $(\Lambda-1)r$ from $x$. Since
\[
\sqrt{Nd_*t}=2r\sqrt{Nd_*}
\]
and
\[
(\Lambda-1)r=32\sqrt{Nd_*t},
\]
Lemma~\ref{lem:exit-control} shows that the original walk started at $x$ leaves $D$ by time
$t$ with probability at most $1/8$.

For completeness, couple the original and censored walks with the same rate-$d_*$ Poisson
clock. Whenever their common position is $z\in D$, sample a transition according to the
uniformising kernel $I+\Delta/d_*$. If the sampled point lies in $D$, move both chains there.
If it lies in $D^c$, move only the original chain and keep the censored chain at $z$. After the
first disagreement, continue with any coupling. The censored uniformising kernel has holding
probability
\[
1-\frac{\deg_D(z)}{d_*},
\]
where $\deg_D(z)$ is the number of neighbours of $z$ in $D$. Running this kernel at rate
$d_*$ gives the generator \eqref{eq:censored-generator}. The chains agree until the original
walk leaves $D$. By the coupling characterisation of total variation,
\[
\|p_t^D(x,\cdot)-p_t(x,\cdot)\|_{\TV}
\leq \mathbb P_x(X_s\notin D\text{ for some }s\leq t)
\leq\frac18.
\]
Applying the same estimate to $y$ and using the triangle inequality,
\[
\|p_t^D(x,\cdot)-p_t^D(y,\cdot)\|_{\TV}
\leq\frac18+\frac12+\frac18
=\frac34.
\]
\end{proof}

The next lemma is finite-dimensional and does not use curvature. Its Dobrushin step is
standard, while the posterior-kernel construction packages row overlap in a form adapted to
the variance on the smaller set. It may be viewed as a total-variation analogue of the
pointwise lower-bound step in the preceding heat-kernel arguments.

\begin{lemma}[Kernel localisation from row overlap]\label{lem:kernel-localisation}
Let $D$ be finite, let $B\subset D$ be nonempty, and let $\mathsf K$ be a symmetric Markov
kernel on $D$. Suppose
\[
\sup_{x,y\in B}
\|\mathsf K(x,\cdot)-\mathsf K(y,\cdot)\|_{\TV}
\leq\kappa<1.
\]
Then every $f:D\to\R$ satisfies
\begin{equation}\label{eq:kernel-localisation}
\sum_{x\in B}|f(x)-f_B|^2
\leq
\frac{2}{1-\kappa}
\ip{f}{(I-\mathsf K)f}_{\ell^2(D)}.
\end{equation}
\end{lemma}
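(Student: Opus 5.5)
The plan is to package the row overlap into an auxiliary reversible Markov kernel on $B$, the posterior kernel, and then compare that kernel in both directions. On one side, its Dirichlet form will be controlled by the Dirichlet form of $\mathsf K$. On the other side, its spectral gap will be bounded below by $1-\kappa$ through a Dobrushin contraction argument.

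\emph{Step 1: the joint law.} Let $\mu$ be the uniform probability on $B$. Draw $x\sim\mu$, then $z\sim\mathsf K(x,\cdot)$ on $D$. Then draw $x'$ from the posterior law of $x$ given $z$, namely
\[
\mathsf Q(z,w)=\frac{\mathsf K(w,z)}{\sum_{v\in B}\mathsf K(v,z)},\qquad w\in B,
\]
defined wherever the denominator is positive, which holds almost surely. By construction $x$ and $x'$ are conditionally independent and identically distributed given $z$. Hence $M:=\mathsf K|_{B}\mathsf Q$, meaning $M(x,y)=\sum_{z}\mathsf K(x,z)\mathsf Q(z,y)$ for $x,y\in B$, is a Markov kernel on $B$ that is reversible with respect to $\mu$.

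\emph{Step 2: upper bound for the Dirichlet form of $M$.} Conditional independence gives
\[
\mathbb E\bigl(f(x)-f(x')\bigr)^2
=2\,\mathbb E\bigl[\Var(f(x)\mid z)\bigr]
\le2\,\mathbb E\bigl(f(x)-f(z)\bigr)^2 .
\]
The last expectation equals $|B|^{-1}\sum_{x\in B}E_x$, where $E_x:=\sum_{z}\mathsf K(x,z)(f(x)-f(z))^2$. Since $\mathsf K$ is symmetric and stochastic, $\sum_{x\in D}E_x=2\ip{f}{(I-\mathsf K)f}_{\ell^2(D)}$. Dropping the nonnegative terms with $x\in D\setminus B$ therefore yields
\[
2\ip{f}{(I-M)f}_{\ell^2(\mu)}=\mathbb E\bigl(f(x)-f(x')\bigr)^2\le\frac4{|B|}\ip{f}{(I-\mathsf K)f}_{\ell^2(D)}.
\]

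\emph{Step 3: spectral gap of $M$.} Write $\delta(\cdot)$ for the Dobrushin coefficient, the supremum over pairs of rows of their total-variation distance, with rows indexed by $B$. The hypothesis says $\delta(\mathsf K|_B)\le\kappa$. Composing with the Markov kernel $\mathsf Q$ cannot increase this coefficient, because $\|\mu\mathsf Q-\nu\mathsf Q\|_{\TV}\le\|\mu-\nu\|_{\TV}$. Hence $\delta(M)\le\kappa$.

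Now let $g$ be an eigenfunction of $M$ with eigenvalue $\lambda\neq1$. Then $g$ is nonconstant, and
\[
|\lambda|\osc g=\osc(Mg)\le\delta(M)\osc g .
\]
So $|\lambda|\le\kappa$. Since $M$ is self-adjoint on $\ell^2(\mu)$, the spectral theorem gives
\[
\ip{f}{(I-M)f}_{\ell^2(\mu)}\ge(1-\kappa)\Var_\mu f .
\]

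\emph{Step 4: conclusion.} Combining Steps 2 and 3 gives $\Var_\mu f\le\frac{2}{(1-\kappa)|B|}\ip{f}{(I-\mathsf K)f}$. Since $\sum_{x\in B}|f(x)-f_B|^2=|B|\Var_\mu f$, this is exactly \eqref{eq:kernel-localisation} with constant $2/(1-\kappa)$.

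The main obstacle is getting the constant $2/(1-\kappa)$ rather than $4/(1-\kappa)$. A naive pairwise argument loses a factor of two: one bounds $(f(x)-f(y))^2$ using the overlap measure $\min\{\mathsf K(x,\cdot),\mathsf K(y,\cdot)\}$ and then applies Young's inequality. The posterior construction avoids this loss, since the identity $\mathbb E(f(x)-f(x'))^2=2\mathbb E\Var(f(x)\mid z)$ has no Young loss. The care needed is in checking that reversibility of $M$ holds and that the Dobrushin bound passes through the composition with $\mathsf Q$.
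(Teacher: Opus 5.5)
Your proposal is correct and follows the paper's proof essentially step for step. Your $M$ is the paper's $\mathsf Q=\mathsf K\Pi$, and you use the same Dobrushin contraction to bound eigenvalues by $\kappa$ and the same conditional-variance comparison $\mathbb E\Var(f(X)\mid Z)\le\mathbb E(f(X)-f(Z))^2$ of Dirichlet forms, with identical constants. One small fix: apply the oscillation bound to every nonconstant eigenfunction, as the paper does with an eigenbasis orthogonal to constants, rather than only to eigenvalues $\lambda\neq1$, because that is what rules out a nonconstant eigenfunction with eigenvalue $1$ and justifies the spectral-gap inequality.
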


\begin{proof}
For $z\in D$, define
\[
s(z):=\sum_{a\in B}\mathsf K(a,z).
\]
Define a kernel $\mathsf Q$ on $B$ by
\[
\mathsf Q(x,y)
:=\sum_{\substack{z\in D\\s(z)>0}}
\frac{\mathsf K(x,z)\mathsf K(y,z)}{s(z)}.
\]
The kernel $\mathsf Q$ is symmetric. It is stochastic because, for $x\in B$,
\begin{align*}
\sum_{y\in B}\mathsf Q(x,y)
&=\sum_{\substack{z\in D\\s(z)>0}}
\mathsf K(x,z)\frac{\sum_{y\in B}\mathsf K(y,z)}{s(z)}\\
&=\sum_{\substack{z\in D\\s(z)>0}}\mathsf K(x,z)\\
&=1.
\end{align*}
The last equality holds because $\mathsf K(x,z)>0$ with $x\in B$ implies $s(z)>0$.
For $s(z)>0$, define
\[
\Pi(z,y):=\frac{\mathsf K(y,z)}{s(z)},
\qquad y\in B,
\]
and choose an arbitrary probability row when $s(z)=0$. Regard $\mathsf K$ as a kernel from
$B$ to $D$. Then
\[
\mathsf Q=\mathsf K\Pi.
\]
Total variation contracts under a Markov kernel, so
\[
\sup_{x,y\in B}
\|\mathsf Q(x,\cdot)-\mathsf Q(y,\cdot)\|_{\TV}
\leq\kappa.
\]
Consequently,
\[
\osc(\mathsf Qh)\leq\kappa\,\osc(h)
\]
for every $h:B\to\R$. This is the standard Dobrushin oscillation contraction; see
\cite{GaubertQu}.

Because $\mathsf Q$ is symmetric and stochastic, it is self-adjoint on the uniform
probability space $\pi_B$. Let
\[
\phi_1\equiv1,\phi_2,\ldots,\phi_{\#B}
\]
be an orthonormal eigenbasis, with
\[
\mathsf Q\phi_j=\lambda_j\phi_j.
\]
If $j\geq2$, then $\phi_j$ is nonconstant and
\[
|\lambda_j|\osc(\phi_j)
=\osc(\mathsf Q\phi_j)
\leq\kappa\osc(\phi_j).
\]
Thus
\[
|\lambda_j|\leq\kappa.
\]
Writing
\[
h-h_B=\sum_{j=2}^{\#B}a_j\phi_j,
\]
we obtain
\begin{align*}
\ip{h}{(I-\mathsf Q)h}_{\ell^2(\pi_B)}
&=\sum_{j=2}^{\#B}(1-\lambda_j)a_j^2\\
&\geq(1-\kappa)\sum_{j=2}^{\#B}a_j^2\\
&=(1-\kappa)\Var_{\pi_B}(h).
\end{align*}
Therefore
\begin{equation}\label{eq:q-poincare}
\Var_{\pi_B}(h)
\leq
\frac1{1-\kappa}
\ip{h}{(I-\mathsf Q)h}_{\ell^2(\pi_B)}.
\end{equation}
It remains to compare the two Dirichlet forms. Let $X$ be uniform on $B$. Conditional on
$X=x$, let $Z$ have law $\mathsf K(x,\cdot)$. Conditional on $Z=z$, sample $Y$
independently from the posterior law $\Pi(z,\cdot)$. Then the transition kernel from $X$ to
$Y$ is $\mathsf Q$, and $X$ and $Y$ are conditionally independent with the same conditional
law given $Z$. Hence
\begin{align*}
\ip{f}{(I-\mathsf Q)f}_{\ell^2(\pi_B)}
&=\frac12\mathbb E\bigl(f(X)-f(Y)\bigr)^2\\
&=\mathbb E\Var\bigl(f(X)\mid Z\bigr)\\
&\leq\mathbb E\bigl(f(X)-f(Z)\bigr)^2.
\end{align*}
The inequality is the $L^2$ optimality of conditional expectation, because $f(Z)$ is a
$Z$-measurable predictor. Expanding the last expectation,
\begin{align*}
\mathbb E\bigl(f(X)-f(Z)\bigr)^2
&=\frac1{\#B}
\sum_{\substack{x\in B\\z\in D}}
\mathsf K(x,z)\bigl(f(x)-f(z)\bigr)^2\\
&\leq\frac1{\#B}
\sum_{x,z\in D}
\mathsf K(x,z)\bigl(f(x)-f(z)\bigr)^2.
\end{align*}
Symmetry and stochasticity of $\mathsf K$ give
\[
\sum_{x,z\in D}
\mathsf K(x,z)\bigl(f(x)-f(z)\bigr)^2
=2\ip{f}{(I-\mathsf K)f}_{\ell^2(D)}.
\]
Apply \eqref{eq:q-poincare} to $h=f|_B$ and multiply by $\#B$. This proves
\eqref{eq:kernel-localisation}.
\end{proof}

\subsection{A fixed-dilation Poincar\'e inequality}\label{subsec:fixed-poincare}

We now apply the overlap estimate from Subsection~\ref{subsec:censoring} to the censored heat
semigroup. Spectral calculus turns one-step dissipation into edge energy and yields the
fixed-dilation inequality.

\begin{theorem}[Fixed-dilation Poincar\'e inequality]\label{thm:poincare-fixed}
Suppose Assumptions~\ref{ass:degree} and~\ref{ass:curvature} hold and $d_*\geq2$. Let $\Lambda$ be given by
\eqref{eq:Lambda}. Then
\[
\sum_{x\in B(o,r)}|f(x)-f_{B(o,r)}|^2
\leq32r^2\mathcal E_{B(o,\Lambda r)}(f)
\]
for every $o\in V$, every integer $r\geq1$, and every $f:V\to\R$.
\end{theorem}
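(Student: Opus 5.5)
The plan is to apply Lemma~\ref{lem:kernel-localisation} to the censored kernel $\mathsf K:=P_t^D$, with $B:=B(o,r)$, $D:=B(o,\Lambda r)$ and $t:=4r^2$. Then I convert the resulting dissipation $\ip{f}{(I-P_t^D)f}_{\ell^2(D)}$ into the edge energy on $D$ by spectral calculus.

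First, $\mathsf K$ is a symmetric Markov kernel on the finite set $D$. Lemma~\ref{lem:censored-overlap} gives $\sup_{x,y\in B}\|\mathsf K(x,\cdot)-\mathsf K(y,\cdot)\|_{\TV}\le 3/4$. Lemma~\ref{lem:kernel-localisation} with $\kappa=3/4$ then yields, for $f$ restricted to $D$,
\[
\sum_{x\in B}|f(x)-f_B|^2\le 8\,\ip{f}{(I-P_t^D)f}_{\ell^2(D)}.
\]

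Second, I bound the dissipation. The generator $\Delta_D$ is symmetric on $\ell^2(D)$, and a direct computation from \eqref{eq:censored-generator} gives $\ip{f}{-\Delta_Df}_{\ell^2(D)}=\mathcal E_D(f)$, the sum over edges with both endpoints in $D$. Since $-\Delta_D\ge0$, I diagonalise it with eigenvalues $\mu\ge0$ and use the elementary inequality $1-\e^{-t\mu}\le t\mu$. This gives
\[
\ip{f}{(I-P_t^D)f}_{\ell^2(D)}\le t\,\ip{f}{-\Delta_Df}_{\ell^2(D)}=4r^2\,\mathcal E_{B(o,\Lambda r)}(f).
\]
Combining the two displays yields the constant $8\cdot4=32$, as claimed. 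Because the left-hand side and $\mathcal E_D$ only involve values of $f$ on $D$, there is no loss in passing from $f:V\to\R$ to $f|_D$.

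None of these steps is a real obstacle, since the heavy lifting (exit-time control, the coupling, and the Dobrushin/posterior argument) is already contained in Lemmas~\ref{lem:censored-overlap} and~\ref{lem:kernel-localisation}. The only points needing care are bookkeeping. One is checking that the quadratic form identity $\ip{f}{-\Delta_Df}=\mathcal E_D(f)$ carries no factor $1/2$: each edge inside $D$ appears twice in $\sum_x f(x)\sum_{y}(f(x)-f(y))$, giving exactly $\sum_{\{x,y\}}(f(x)-f(y))^2$. The other is confirming that the censored kernel is symmetric and stochastic on counting measure, as required by Lemma~\ref{lem:kernel-localisation}, which was recorded when $P_t^D$ was introduced.
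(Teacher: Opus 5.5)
Your proposal is correct and follows the paper's proof essentially line by line. Lemma~\ref{lem:kernel-localisation} applied to $P_t^D$ with $\kappa=3/4$ from Lemma~\ref{lem:censored-overlap} gives the factor $8$; the spectral bound $1-\e^{-t\mu}\le t\mu$, the identity $\ip{f}{-\Delta_Df}_{\ell^2(D)}=\mathcal E_D(f)$ (correctly with no factor $1/2$, as you note) and $t=4r^2$ then give $32r^2$.
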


\begin{proof}
Fix $o$ and $r$, and use the notation
\[
B:=B(o,r),
\qquad
D:=B(o,\Lambda r),
\qquad
t:=4r^2.
\]
Apply Lemma~\ref{lem:kernel-localisation} to
\[
\mathsf K:=P_t^D
\]
and
\[
\kappa:=\frac34.
\]
Lemma~\ref{lem:censored-overlap} gives the required total-variation hypothesis. For the
restriction of $f$ to $D$,
\[
\sum_{x\in B}|f(x)-f_B|^2
\leq8\ip{f}{(I-P_t^D)f}_{\ell^2(D)}.
\]
The operator $-\Delta_D$ is nonnegative and self-adjoint. Since
\[
1-e^{-t\lambda}\leq t\lambda
\qquad
(\lambda\geq0),
\]
spectral calculus gives
\[
\ip{f}{(I-P_t^D)f}_{\ell^2(D)}
\leq t\ip{f}{-\Delta_Df}_{\ell^2(D)}.
\]
By symmetry of the graph,
\begin{align*}
\ip{f}{-\Delta_Df}_{\ell^2(D)}
&=\sum_{\substack{\{x,y\}\in E\\x,y\in D}}
\bigl(f(x)-f(y)\bigr)^2\\
&=\mathcal E_D(f).
\end{align*}
Therefore
\[
\sum_{x\in B}|f(x)-f_B|^2
\leq8t\mathcal E_D(f)
=32r^2\mathcal E_D(f).
\]
\end{proof}

\begin{remark}
Theorem~\ref{thm:poincare-fixed} is already scale invariant: its dilation is independent of the
centre and radius. Its proof does not use volume doubling. The doubling theorem is used only
to replace the degree-dependent dilation $\Lambda$ by the conventional dilation two.
\end{remark}

\section{Reduction to dilation two}\label{sec:poincare-proof}

In standard geodesic doubling metric-measure formulations, a weak Poincar\'e inequality
with any fixed enlargement can be upgraded to the strong form by a Whitney-covering
argument going back to Jerison; see \cite{Jerison}, the detailed account in
\cite[Sections~5.3.2--5.3.5]{SaloffCosteSobolev}, and the metric-measure/random-walk formulation in
\cite[Chapter~3 in the arXiv version]{MuruganSaloffCoste}. Our inequality is stated for open integer graph
balls and uses the induced edge energy, and we also need to retain quantitative dependence
only on $d_*$. For completeness, we therefore include a discrete covering-and-chaining
argument adapted to these conventions, yielding the conventional dilation two.

The volume-doubling theorem bounds the number of balls in a separated covering. Local
Poincar\'e inequalities control oscillation inside each ball, and a connected graph of centres
controls the differences of their local means. Small radii are handled directly by geodesics.

\begin{proof}
If $d_*\leq1$, then $G$ has at most two vertices. For $r=1$, the left-hand side is zero. For
$r\geq2$, one has $B(o,r)=B(o,2r)=V$. The one-vertex case is trivial, and on the two-vertex
graph
\[
\sum_{x\in V}|f(x)-f_V|^2
=\frac12\mathcal E_V(f)
\leq r^2\mathcal E_V(f).
\]
Thus $C_P(d_*)=1$ suffices when $d_*\leq1$. Assume henceforth that $d_*\geq2$. Put
\[
C_0:=32
\]
and retain $\Lambda$ from \eqref{eq:Lambda}. Let
\[
K_D:=K(d_*)
\]
be the doubling constant from Theorem~\ref{thm:doubling}. Define
\[
R_*:=\lceil64\Lambda\rceil,
\qquad
q_0:=\left\lceil\log_2(128\Lambda)\right\rceil,
\qquad
M_{\mathrm{cov}}:=K_D^{q_0+1}.
\]
We first treat $r\geq R_*$. Set
\[
s:=\left\lfloor\frac{r}{32\Lambda}\right\rfloor.
\]
Since $r/(32\Lambda)\geq2$,
\begin{equation}\label{eq:s-bounds}
1\leq\frac{r}{64\Lambda}\leq s\leq\frac{r}{32\Lambda}.
\end{equation}
Since $B(o,r)$ is finite, choose a maximal $2s$-separated set
\[
\{x_1,\ldots,x_n\}\subset B(o,r)
\]
containing $x_1=o$. Here $2s$-separated means
\[
\dist(x_i,x_j)\geq2s
\qquad
(i\neq j).
\]
Maximality gives the covering
\begin{equation}\label{eq:cover}
B(o,r)\subset\bigcup_{i=1}^nB(x_i,2s),
\end{equation}
and the balls $B(x_i,s)$ are pairwise disjoint.
Put
\[
q:=\left\lceil\log_2\frac{2r}{s}\right\rceil.
\]
By \eqref{eq:s-bounds},
\[
q\leq q_0.
\]
For every $i$,
\[
B(o,r)\subset B(x_i,2r)\subset B(x_i,2^qs).
\]
Iterated doubling therefore gives
\begin{equation}\label{eq:small-ball-lower}
\#B(x_i,s)
\geq K_D^{-q}\#B(o,r).
\end{equation}
The disjoint balls $B(x_i,s)$ lie in $B(o,2r)$. Hence
\begin{align*}
nK_D^{-q}\#B(o,r)
&\leq\#B(o,2r)\\
&\leq K_D\#B(o,r).
\end{align*}
Thus
\[
n\leq K_D^{q+1}\leq M_{\mathrm{cov}}.
\]
Form a graph on the centres by joining $x_i$ and $x_j$ whenever
\[
\dist(x_i,x_j)<5s.
\]
This centre graph is connected. Indeed, follow a geodesic from $o$ to $x_i$. Assign to each
geodesic vertex a centre whose ball in \eqref{eq:cover} contains it, choosing $x_1=o$ for the
initial vertex and $x_i$ for the final vertex. Centres assigned to two consecutive vertices are
at distance less than
\[
2s+1+2s=4s+1\leq5s.
\]
After deleting repetitions, this gives a path from $x_1$ to $x_i$. Every centre can therefore
be connected to $x_1$ by a simple path with at most $n-1$ links.
Set
\[
B:=B(o,r),
\qquad
\mathcal E:=\mathcal E_{B(o,2r)}(f),
\qquad
U_i:=B(x_i,2s),
\qquad
m_i:=f_{U_i}.
\]
Choose a disjoint partition
\[
B=A_1\mathbin{\dot\cup}\cdots\mathbin{\dot\cup}A_n
\]
with $A_i\subset U_i$. Such a partition exists by \eqref{eq:cover}.
By Theorem~\ref{thm:poincare-fixed}, \eqref{eq:s-bounds}, and
\[
B(x_i,2\Lambda s)\subset B(o,2r),
\]
we have
\begin{equation}\label{eq:local-poincare}
\sum_{x\in U_i}|f(x)-m_i|^2
\leq C_0(2s)^2\mathcal E
=4C_0s^2\mathcal E.
\end{equation}
The stated inclusion follows from $x_i\in B(o,r)$ and
$2\Lambda s\leq r/16$.
Suppose now that $x_i$ and $x_j$ are linked. Put
\[
W:=B(x_i,8s).
\]
Then $U_i\cup U_j\subset W$. Since $B(x_i,s)\subset U_i$ and
$B(x_j,s)\subset U_j$, \eqref{eq:small-ball-lower} gives
\[
\#U_i,\#U_j
\geq K_D^{-q}\#B.
\]
Writing $m_W=f_W$ and using Cauchy--Schwarz,
\begin{align*}
|m_i-m_j|^2
&\leq2|m_i-m_W|^2+2|m_j-m_W|^2\\
&\leq\frac{4K_D^q}{\#B}
\sum_{x\in W}|f(x)-m_W|^2.
\end{align*}
Apply Theorem~\ref{thm:poincare-fixed} at radius $8s$. Since
\[
B(x_i,8\Lambda s)\subset B(o,2r)
\]
because $8\Lambda s\leq r/4$, we obtain
\[
\sum_{x\in W}|f(x)-m_W|^2
\leq C_0(8s)^2\mathcal E.
\]
Therefore
\begin{equation}\label{eq:mean-link}
|m_i-m_j|^2
\leq
\frac{256C_0K_D^qs^2}{\#B}\mathcal E
\end{equation}
whenever the centres are linked.
Join $x_i$ to $x_1$ by a simple centre path. It has at most $n-1$ links. Cauchy--Schwarz and
\eqref{eq:mean-link} yield
\begin{equation}\label{eq:mean-path}
|m_i-m_1|^2
\leq
\frac{256C_0K_D^qn^2s^2}{\#B}\mathcal E.
\end{equation}
Since $f_B$ minimises the squared error over constants,
\begin{align*}
\sum_{x\in B}|f(x)-f_B|^2
&\leq\sum_{x\in B}|f(x)-m_1|^2\\
&\leq2\sum_{i=1}^n\sum_{x\in A_i}|f(x)-m_i|^2
+2\sum_{i=1}^n\#A_i|m_i-m_1|^2.
\end{align*}
Using \eqref{eq:local-poincare}, \eqref{eq:mean-path}, and
$\sum_i\#A_i=\#B$,
\begin{align*}
\sum_{x\in B}|f(x)-f_B|^2
&\leq
\bigl(8C_0n+512C_0K_D^qn^2\bigr)s^2\mathcal E\\
&\leq
C_0\bigl(8M_{\mathrm{cov}}+512K_D^{q_0}M_{\mathrm{cov}}^2\bigr)
 r^2\mathcal E.
\end{align*}
This proves the theorem for $r\geq R_*$.
Now let $1\leq r<R_*$. For each $x\in B(o,r)$, choose a geodesic $\gamma_x$ from $o$ to
$x$. Every vertex of $\gamma_x$ lies in $B(o,r)$, and Cauchy--Schwarz gives
\[
|f(x)-f(o)|^2
\leq
r\sum_{\{u,v\}\in\gamma_x}
\bigl(f(u)-f(v)\bigr)^2
\leq r\mathcal E_{B(o,r)}(f).
\]
Because the mean minimises the squared error,
\[
\sum_{x\in B(o,r)}|f(x)-f_{B(o,r)}|^2
\leq
r\,\#B(o,r)\,\mathcal E_{B(o,r)}(f).
\]
The degree bound gives
\[
\#B(o,r)
\leq
V_*:=\sum_{k=0}^{R_*-1}d_*^k.
\]
Since $r\geq1$ and $B(o,r)\subset B(o,2r)$,
\[
\sum_{x\in B(o,r)}|f(x)-f_{B(o,r)}|^2
\leq
V_*r^2\mathcal E_{B(o,2r)}(f).
\]
Combining this estimate with the case $d_*\leq1$, one valid choice is
\[
C_P(d_*):=
\begin{cases}
1,&d_*\leq1,\\[1mm]
\displaystyle
\max\left\{
V_*,
C_0\bigl(8M_{\mathrm{cov}}+512K_D^{q_0}M_{\mathrm{cov}}^2\bigr)
\right\},&d_*\geq2.
\end{cases}
\]
\end{proof}

\begin{remark}
For every finite $D\subset V$,
\begin{align*}
\mathcal E_D(f)
&\leq\sum_{x\in D}\Gamma(f)(x).
\end{align*}
Thus Theorems~\ref{thm:poincare-fixed} and \ref{thm:poincare-two} also imply the corresponding
formulations with vertex energy on the right-hand side.
\end{remark}

\appendix

\section{Bounded maximum principle on an infinite graph}\label{sec:max-principle}

The nonlinear gradient estimate in Subsection~\ref{subsec:modified-flow} requires a maximum principle on an infinite
vertex set, where the supremum need not be attained. The following bounded version uses
approximate maximisers and an upper Dini derivative; no volume-growth hypothesis is needed.

\begin{lemma}[Bounded maximum principle]\label{lem:max-principle}
Let
\[
h\in C^1\bigl([0,T],\ell^\infty(V)\bigr)
\]
and suppose
\[
\partial_th_t(x)
\leq
\sum_y a_t(x,y)\bigl(h_t(y)-h_t(x)\bigr),
\]
where $a_t(x,y)\geq0$ and
\[
\sup_{t\in[0,T],\,x\in V}\sum_y a_t(x,y)
\leq M<\infty.
\]
Then
\[
t\longmapsto\sup_xh_t(x)
\]
is nonincreasing on $[0,T]$.
\end{lemma}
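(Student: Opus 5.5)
The plan is a perturbation argument: replace $h$ by a function with a strictly negative drift, then locate an approximate maximiser at which the upper Dini derivative of $S(t):=\sup_x h_t(x)$ is controlled. Fix $0\le t_1<t_2\le T$ and $\delta>0$, and set $g_t:=h_t-\delta t$. The claim is that $S_\delta(t):=\sup_x g_t(x)$ is nonincreasing on $[0,T]$. Granting this, we get $S(t_2)\le S(t_1)+\delta(t_2-t_1)$, and letting $\delta\downarrow0$ gives the lemma. The perturbation helps because $g$ satisfies the same differential inequality with an extra $-\delta$ on the right, so the drift is strictly negative near any approximate maximiser.

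\textbf{Step 1: continuity and the Dini bound.} The map $t\mapsto S_\delta(t)$ is Lipschitz, since $\norm{g_t-g_s}_\infty\le L\abs{t-s}$, where $L$ bounds $\norm{\partial_th}_\infty+\delta$ on the compact interval (using $h\in C^1$). It therefore suffices to show that the upper right Dini derivative $D^+S_\delta(t)\le0$ for every $t\in[0,T)$; a continuous function with nonpositive upper right Dini derivative everywhere is nonincreasing.

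\textbf{Step 2: choosing the approximate maximiser.} Fix $t$ and $\varepsilon>0$, and pick $x$ with $g_t(x)\ge S_\delta(t)-\varepsilon$. At this $x$ the hypothesis gives
\[
\partial_tg_t(x)\le\sum_ya_t(x,y)\bigl(g_t(y)-g_t(x)\bigr)-\delta\le M\varepsilon-\delta,
\]
because each difference $g_t(y)-g_t(x)\le\varepsilon$ and the coefficients are nonnegative with row sum at most $M$.

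\textbf{Step 3: uniform control over near-maximisers (the main obstacle).} The difficulty is that the maximiser may change with time, so one estimate at a single $x$ does not control $S_\delta(t+k)$. I will make the estimate uniform using norm-$C^1$. Write $g_{t+k}=g_t+k\,\partial_tg_t+o(k)$ in $\ell^\infty$, uniformly in $x$. Then for every $y$:
\begin{itemize}
\item if $g_t(y)\ge S_\delta(t)-\varepsilon$, Step 2 applies at $y$ and gives $g_{t+k}(y)\le S_\delta(t)+k(M\varepsilon-\delta)+o(k)$;
\item if $g_t(y)<S_\delta(t)-\varepsilon$, then $g_{t+k}(y)\le S_\delta(t)-\varepsilon+Lk$, which is below $S_\delta(t)$ once $k$ is small.
\end{itemize}
Choosing $\varepsilon<\delta/M$ makes both bounds at most $S_\delta(t)$ for all small $k>0$. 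Hence $D^+S_\delta(t)\le M\varepsilon-\delta<0$. If $M=0$, the same argument applies with the $M\varepsilon$ term absent.

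This gives the monotonicity of $S_\delta$, and the lemma follows as explained at the start.
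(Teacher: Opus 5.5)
Your proof is correct and essentially the paper's. Both arguments control the upper right Dini derivative of $\sup_x h_t(x)$ at approximate maximisers, using the uniform $\ell^\infty$ expansion $h_{t+k}=h_t+k\,\partial_th_t+o(k)$ and the bound $\partial_th_t(x)\le M\eta$ at $\eta$-near-maximisers. The differences are minor: you split vertices forward in time instead of pulling back near-maximisers of $h_{t+\tau_n}$, and your $-\delta t$ perturbation makes the decrease strict, so you can finish with an elementary continuity argument instead of the paper's appeal to a.e.\ differentiability of the Lipschitz function $H$ followed by integration.
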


\begin{proof}
Set
\[
H(t):=\sup_xh_t(x).
\]
For $t<T$, write
\[
D^+H(t)
:=\limsup_{\tau\downarrow0}
\frac{H(t+\tau)-H(t)}\tau.
\]
Norm differentiability of $h$ implies
\begin{equation}\label{eq:dini}
D^+H(t)
\leq
\lim_{\eta\downarrow0}
\sup_{\{x:h_t(x)>H(t)-\eta\}}
\partial_th_t(x).
\end{equation}
For completeness, choose $\tau_n\downarrow0$ realising the limsup in $D^+H(t)$ and choose
$x_n$ so that
\[
h_{t+\tau_n}(x_n)
\geq H(t+\tau_n)-\tau_n^2.
\]
Since $h$ is differentiable in $\ell^\infty$,
\[
h_{t+\tau_n}
=h_t+\tau_n\partial_th_t+o(\tau_n)
\]
in $\ell^\infty$. The same expansion shows that $H$ is locally Lipschitz. Consequently,
\[
h_t(x_n)\geq H(t)-O(\tau_n),
\]
so $x_n$ is an approximate maximiser of $h_t$. Moreover,
\[
\frac{H(t+\tau_n)-H(t)}{\tau_n}
\leq\partial_th_t(x_n)+o(1).
\]
Taking the limsup and then letting the approximation level tend to zero proves
\eqref{eq:dini}.
For every $x$ in the set on the right-hand side of \eqref{eq:dini} and every $y$,
\[
h_t(y)-h_t(x)\leq\eta.
\]
Therefore
\[
\partial_th_t(x)
\leq M\eta.
\]
Letting $\eta\downarrow0$ in \eqref{eq:dini} gives
\[
D^+H(t)\leq0.
\]
Since
\[
|H(t)-H(s)|\leq\|h_t-h_s\|_\infty
\]
and $h$ is $C^1$, the function $H$ is locally Lipschitz and hence absolutely continuous. At
almost every time it is differentiable, and there
\[
H'(t)=D^+H(t)\leq0.
\]
Integration shows that $H$ is nonincreasing. No attainment of the supremum and no volume
hypothesis are required.
\end{proof}

\noindent \textbf{Declarations of interest}: None.

\bigskip
\noindent \textbf{Data availability statement}: There are no new data associated with this article.

\bigskip
\noindent \textbf{AI assistance statement}: The authors used AI tools to assist with initial conceptualization, symbolic checking and manuscript editing; all mathematical validation, final proof decisions, and final wording remain the sole responsibility of the human authors.

\bigskip
\raggedbottom

\end{document}